\documentclass{amsart}

\usepackage{lineno}
\usepackage{amssymb}
\usepackage{amsmath}
\usepackage{amsthm}
\usepackage{mathrsfs}
\usepackage{bbm}
\usepackage{cite}
\usepackage{color}
\usepackage{enumitem}
\usepackage{pgfplots}

\usepackage[colorlinks,hypertexnames=false]{hyperref}
\usepackage[msc-links]{amsrefs}

\usepackage{changes}

\usepackage{mathtools}
\usepackage{extarrows}
\usepackage{setspace}

\usepackage{graphicx}

\usepackage{imakeidx}
\makeindex

\hypersetup{
	colorlinks=true,
	linkcolor=black,
	filecolor=black,
	urlcolor=black,
	citecolor=black,
}

\newtheorem{thm}{Theorem}[section]

\newtheorem{lem}[thm]{Lemma}

\newtheorem{theorem}[thm]{Theorem}
\newtheorem{corollary}[thm]{Corollary}

\newtheorem{definition}[thm]{Definition}

\newtheorem{remark}[thm]{Remark}

\numberwithin{equation}{section}

\pgfplotsset{compat=1.18} 

\begin{document}

\title[SFR functions on arbitrary domains]{Slice Fueter-regular functions on arbitrary domains in octonions}
\author{Xinyuan Dou}
\email[Xinyuan Dou]{douxinyuan@ustc.edu.cn}
\address{Department of Mathematics, University of Science and Technology of China, Hefei 230026, China}
\author{Guangbin Ren}
\email[Guangbin Ren]{rengb@ustc.edu.cn}
\address{Department of Mathematics, University of Science and Technology of China, Hefei 230026, China}
\author{Zeping Zhu}
\email[Zeping Zhu]{zzp@mail.ustc.edu.cn}
\address{Department of Mathematics, Chongqing Normal University, Chongqing 401331, China}  
\author{Ting Yang*}
\email[Ting Yang (The corresponding author)]{tingy@mail.ustc.edu.cn}
\address{School of Mathematics and Statistics, Anqing Normal University, Anqing, 246133, China}
%\date{\today}
\keywords{Slice functions; Fueter-regular functions; Riemann domains; Octonions}
\thanks{This work was supported by Xiaomi Young Talents Program and by the NNSF of China (12401104). }

\subjclass[2020]{Primary 30G35; Secondary 32A30, 32D26}

\begin{abstract}
This paper is concerned with a class of generalized slice Fueter-regular functions on arbitrary domains in $\mathbb{O}$ with local stem functions.
Some classical theorems such as the maximum modulus principle will be generalized to our setting.
Some new phenomena such as the conditional uniqueness of stem vectors will be discovered by means of new   technical tools, e.g., the CCL equivalence relation and the Bers-Vekua continuation.
And a natural connection  between the theory of slice Fueter-regular functions and that of Riemann domains will be revealed via the quotient space under the CCL equivalence relation.
\end{abstract}

\maketitle
\section{Introduction}
The study of slice Dirac-regular functions, also known as slice Fueter-regular functions, was originally initiated  by M. Jin, G. Ren and I. Sabadini \cite{Jin-2020}, and latter enriched by R. Ghiloni with a certain type of Bers-Vekua system which characterizes axially monogenic functions of degree zero \cite{Ghiloni-2021}.
In light of the earlier contributions, we intend to broaden the current framework and seek for new observations.

More explicitly, this paper focuses on a particular class of generalized slice Fueter-regular functions, whose stem functions exist locally (but not necessarily globally), defined on arbitrary domains in $\mathbb{O}$. Firstly, we notice that the real analyticity still holds for such functions.
Combining the real analyticity and the fact that the restriction of any such function on every quaternion slice of its definition domain is harmonic, we are able to give a generalized and enhanced version of maximum modulus principle, along with its corollary, as follows.
\begin{itemize}
\item[i.](strong maximum modulus principle) Let $f$ be a nonconstant slice Fueter-regular function on a domain $\Omega\subset\mathbb{O}$. Then the octonion norm of the restriction of $f$ on any non-empty quaternion slice of $\Omega$ has no local maximum.
\item[ii.](weak maximum modulus principle) Let $f$ be a nonconstant slice Fueter-regular function on a domain $\Omega\subset\mathbb{O}$. Then the octonion norm of $f$ has no local maximum.
\end{itemize}
We would like to mention that the weak maximum modulus principle provides a negative answer to the question about the original version of maximum modulus principle for slice Fueter-regular functions posed by R. Ghiloni in Sect. 2.6 of \cite{Ghiloni-2021}.
Secondly, by means of a specific class of curves named as circular liftings, we establish the CCL equivalence relation on an arbitrary domain in $\mathbb{O}$, and discover a connection between slice Fueter-regular functions and Bers-Vekua continuations, i.e., real-analytic continuations satisfying the Bers-Vekua system (for more details, one may refer to Definitions \ref{def-circular-lifting}, \ref{def-CCL-equivalence}, \ref{def-Bers-Vekua-continuation}, and Theorem \ref{thm-con-between-SFR-function-&-BV-continuation}). And this connection eventually leads to a new phenomenon about the conditional uniqueness of stem vectors as follows.
\begin{itemize}
\item[iii.](uniqueness of stem vectors on CCL equivalence classes)
If two points $x, x'$ in a domain $\Omega\subset\mathbb{O}$ satisfy the CCL equivalence relation, then for any slice Fueter-regular function $f$ on $\Omega$, the stem vectors of $f$ at $x, x'$ are identical.
\end{itemize}
Finally, we notice that the CCL equivalence relation on any given domain $\Omega$ has a variant on the disjoint union (denoted as $\hat{\Omega}$) of the complex slices of $\Omega$. And based on several observations on this variant, particularly the natural local homeomorphism $P$ from its corresponding quotient space to $\mathbb{C}$, we are able to discover a Riemann domain structure strongly connected to the slice Fueter-regular functions as follow.
\begin{itemize}
\item[iv.](CCL Riemann domain)
The quotient space of $\hat{\Omega}$ under the variant CCL equivalence relation, equipped with the natural local homeomorphism $P$, is a Riemann domain over $\mathbb{C}$.
\end{itemize}
Since a lot of new notations have not been introduced here yet, the main results above are stated in a non-formal manner.
They will be rewritten formally as Theorem \ref{thm-maximum-modulus-principle-for-slice-Fueter-regular-functions}, Corollary \ref{cor-maximum-modulus-principle-for-slice-Fueter-regular-functions}, Theorems \ref{thm-conditional-uniqueness-stem-vectors} and \ref{thm-CCL-Riemann-domain}.

This paper is organized as follows.
In Section $2$, we mainly introduce a certain type of generalized slice Fueter regular functions, and provide a concrete example to illustrate the potential absence of the classical stem functions in our setting.
In Section $3$, we verify the real analyticity of such functions, and give a stronger version of maximum modulus principle as an application.
In Section $4$, based on the real analyticity, we design two main tools, i.e.,  ``CCL equivalence" and ``Bers-Vekua continuation", to study the conditional uniqueness of stem vectors.
In Section $5$, we show that ``CCL equivalence" naturally induces a Riemann domain structure.

\section{Preliminaries}
\subsection{Octonions}
Octonions are a well-known kind of hypercomplex numbers, constituting a non-associative division algebra over the field of real numbers.
This algebra is usually represented by the symbol $\mathbb{O}$.
A minimal introduction to its structure will be given below, one may refer to, e.g., \cites{Gentili-2008,Schafer-1966} for more details.

The algebra $\mathbb{O}$ has a real linear basis $\{e_0, e_1, \cdots, e_7\}$ subject to the following conditions:
$e_0$ is the multiplicative identity, in short $e_0=1$; and
$$
e_le_m=\left\{\begin{array}{ll}
                -e_0,                           & \text{if } l=m, \\
                &\\
                \epsilon_{lmn}e_n,              & \text{if } l\neq m;
              \end{array}\right.
$$
where $l,m=1,2,\cdots,7$, and $\epsilon_{lmn}$ is a completely antisymmetric tensor with value $1$ when $(l,m,n)$ is an even permutation of tripes $(1,2,3)$, $(1,4,5)$, $(1,7,6)$, $(2,4,6)$, $(2,5,7)$, $(3,4,7)$ or $(3,6,5)$, with value $-1$ when $(l,m,n)$ is an odd permutation of the listed tripes.
Every octonion can be written as $x=x_0+\sum_{l=1}^{7}x_le_l$.
Its conjugate is given as $\overline{x}=x_0-\sum_{l=1}^{7}x_le_l$, and its norm as $\lvert x\rvert=\sqrt{x\overline{x}}=\sqrt{\sum_{l=0}^{7}x_l^2}$.
$x_0$ is the real part of $x$, denoted by $\Re(x)$; and $\sum_{l=1}^{7}x_le_l$ is the imaginary  part, denoted by $\Im(x)$.

An imaginary unit is an octonion with unit norm and zero real part. The sphere consisting of all imaginary units is often denoted by $\mathbb{S}$.
The symbol $\mathbb{C}_I$ represents the subalgebra $\mathbb{R}+\mathbb{R}I$ (isomorphic to the complex field) generated by any given imaginary unit $I\in\mathbb{S}$.
It is worth mentioning that every octonion with non-zero imaginary  part can be uniquely decomposed into the form as $x_0+rI$ with $x_0$ being its real part, $r$ the norm of its imaginary part, and $I\in\mathbb{S}$. This observation leads to the so-called book structure of octonions, i.e., $\mathbb{O}=\cup_{I\in\mathbb{S}}\mathbb{C}_I$.

The set of ordered pairs comprising two orthogonal imaginary units is denoted by $\mathcal{N}$. The symbol $\mathbb{H}_\mathbb{I}$ stands for the subalgebra (isomorphic to the algebra of quaternions) generated by any given pair $\mathbb{I}=(I,J)\in\mathcal{N}$.
The Cayley-Dickson construction says $\mathbb{O}=\mathbb{H}_\mathbb{I}+L\mathbb{H}_\mathbb{I}$, where $L$ is another imaginary unit orthogonal to $\mathbb{H}_\mathbb{I}$.

\subsection{Slice functions on arbitrary domains}
Throughout this paper, $\mathbb{O}$ is naturally equipped with the ordinary Euclidean topology, since every octonion $x$ can be identified with the vector $(x_0, x_1,\cdots, x_7)$ in $\mathbb{R}^8$.
Let $\Omega$ be an arbitrary (open) domain in $\mathbb{O}$.
For any $a\in \mathbb{R}$ and $b\in(0,+\infty)$, define $\mathbb{S}(\Omega,a,b):=\{I\in\mathbb{S}|a+bI\in\Omega\}$.
\begin{definition}\label{def-slice-function}
A function $f:\Omega\rightarrow\mathbb{O}$ is called a $($left$)$ slice function if for every connected component $\Lambda$ of any non-empty $\mathbb{S}(\Omega,a,b)$, there exists a pair $(u,v)\in\mathbb{O}^2$ such that
\begin{equation}\label{eq-slice-function}
 f(a+bI)= u+Iv\quad\text{for all }I\in\Lambda.
\end{equation}
\end{definition}

We notice that in the definition above for any fixed $\Lambda$, the pair $(u,v)$ is inevitably unique, since $\Lambda$ contains infinitely many imaginary units and $\mathbb{O}$ is a division algebra.
%Indeed, $a+b\Lambda$ is an open sub-manifold of the sphere $a+b\mathbb{S}$, and $I$ can be considered as a smooth function on $a+b\Lambda$ with expression as
%\begin{equation*}
%I=\frac{\Im(x)}{b}, \qquad x\in a+b\Lambda.
%\end{equation*}
%Assume $(u',v')$ is another pair satisfying \eqref{eq-slice-function}.
%We thus have $(u'-u)\equiv I(v-v')$ on $a+b\Lambda$. Then applying the tangent vector fields $L_{mn}$ on both sides leads to
%\begin{equation}\label{eq-slice-function-1}
% L_{mn}I(v-v')\equiv 0 \text{ on }a+b\Lambda,
%\end{equation}
%for all $(m,n)\in\Theta$. A direct calculation yields
%$$L_{mn}I=\frac{(x_me_n-x_ne_m)}{b}. $$
%Substituting this equality into \eqref{eq-slice-function-1}, we see
%\begin{equation*}
%(x_me_n-x_ne_m)(v-v')\equiv 0 \text{ on }a+b\Lambda
%\end{equation*}
%for all $(m,n)\in\Theta$, which implies that $v=v'$, and thus $u=u'$.
Moreover, any given $x\in\Omega\setminus\mathbb{R}$ can be uniquely represented as $a+bI$ with $a\in \mathbb{R}$, $b\in(0,+\infty)$ and $I\in$ a certain $\Lambda$.
So there is a unique corresponding pair $(u,v)$ for such $x$. And we call $(u,v)$ as the stem vector of $f$ at the point $x$, written as $(u_x,v_x)$ to emphasize the dependence on $x$.
For the special case that $x\in\Omega\cap\mathbb{R}$, we set $(u_x,v_x):=(f(x),0)$.

For any $I\in\mathbb{S}$, between $\mathbb{C}$ and $\mathbb{C}_I$ there is a natural isomorphism with the following expression:
$$
\tau_I(\alpha+\beta i)=\alpha+\beta I
$$
for $\alpha,\beta\in\mathbb{R}$.
By convention, for any number $z\in \mathbb{C}$, $z_I$ stands for $\tau_I(z)$; for any domain $\Omega\subset\mathbb{O}$, $\Omega_I$ stands for $\Omega\cap\mathbb{C}_I$.
We denote $\tau_I^{-1}\Omega_I$ by $\Omega^I$, and separate it into two parts as
$$\Omega^I_{+}:=\Omega^I\cap\{\alpha+\beta i|\beta\geq 0\}\quad\text{and}\quad\Omega^I_{-}:=\Omega^I\cap\{\alpha+\beta i|\beta < 0\}.$$
With these notations, we are now able to reintroduce the concept of stem function.
\begin{definition}\label{def-stem-function}
Let $f:\Omega\rightarrow\mathbb{O}$ be a slice function. For each $I\in\mathbb{S}$, the stem function of $f$ on $\Omega^I$ is defined as
%$$
%(u^I,v^I): z\longmapsto \left\{\begin{array}{ll}
 %                            (u_{z_I},+v_{z_I}), & \text{ for }z\in \Omega^I_{+},\\
%                               &   \\
%                             (u_{z_I},-v_{z_I}), & \text{ for }z\in \Omega^I_{-},
%                           \end{array} \right.
%$$
$$
(u^I(z),v^I(z)):=(u_{z_I},\pm v_{z_I})\quad \text{for } z\in \Omega^I_{\pm},
$$
where $(u_{z_I},v_{z_I})$ is the stem vector of $f$ at  the point $z_I$.
\end{definition}
As we want, $f(z_I)=u^I(z)+Iv^I(z)$ is valid for any $z\in \Omega^I$.
When its dependence with the imaginary unit $I$ needs to be emphasized, we shall call $(u^I,v^I)$ the $I$-slice stem function.
\begin{remark}\label{rem-symmetry-of-stem-function}
Although the stem functions are not necessarily an even-odd pair in contrast with the axially symmetric case, they always possess a specific type of symmetry:
\begin{equation}\label{eq-symmetry-of-stem-function}
u^I(z)=u^{-I}(\overline{z})\quad\text{and}\quad v^I(z)=-v^{-I}(\overline{z})\quad\text{for } z\in\Omega^I, I\in\mathbb{S}.
\end{equation}
Moreover, if $I$ and $-I$ are path connected in every $\mathbb{S}(\Omega,a,b)$ containing both $I$ and $-I$, then the $I$-slice stem function will essentially become an even-odd pair, i.e.,
$$
u^I(z)=u^{I}(\overline{z})\quad\text{and}\quad v^I(z)=-v^{I}(\overline{z})\quad\text{for } z\in\Omega^{-I}\cap\Omega^I.
$$
\end{remark}
Now we introduce a sub-family of slice functions.
\begin{definition}
Let $f:\Omega\rightarrow\mathbb{O}$ be a slice function. If $(u^{I_1},v^{I_1})\equiv(u^{I_2},v^{I_2})$ o55n $\Omega^{I_1}\cap\Omega^{I_2}$ for any $I_1,I_2\in \mathbb{S}$, then we call $f$ a complete slice function.
\end{definition}
Briefly speaking, ``complete slice" means the existence of a global stem function.
Moreover, this function has an explicit expression as shown below.
\begin{definition}\label{def-global-stem-function}
Let $f:\Omega\rightarrow\mathbb{O}$ be a complete slice function. Its global stem function $($with domain of definition  $\cup_{I\in\mathbb{S}}\Omega^I$$)$ is defined as
$$
(u_\Omega(z),v_\Omega(z)):=(u^I(z),v^I(z)) \quad\text{for } z\in \text{some } \Omega^I.
$$
\end{definition}
\begin{remark}\label{rem-exist-of-global-stem-function}
It is easy to see
\begin{equation}\label{eq-f-expression-global-stem-function}
f(z_I)=u_\Omega(z)+Iv_\Omega(z)\quad\text{for } z\in\Omega^I, I\in\mathbb{S}.
\end{equation}
In this case, thanks to \eqref{eq-symmetry-of-stem-function}, $(u_\Omega,v_\Omega)$ is certainly an even-odd pair, i.e.,
$$
u_\Omega(z)=u_\Omega(\overline{z})\quad\text{and}\quad v_\Omega(z)=-v_\Omega(\overline{z})\quad\text{for } z\in\cup_{I\in\mathbb{S}}\Omega^I.
$$
\end{remark}
A special type of domain is described as follows.
\begin{definition}
Let $\Omega$ be a domain in $\mathbb{O}$.
If $\mathbb{S}(\Omega,a,b)$ is connected for any $a\in \mathbb{R}$ and $b\in(0,+\infty)$, then we call $\Omega$ a circularly connected domain.
\end{definition}
\begin{remark}\label{rem-local-axially-connectness}
Obviously, every open ball in $\mathbb{O}$ is a circularly connected domain.
Thus we can say that every non-empty open set in $\mathbb{O}$ is locally circularly connected, since all open balls form a topological basis of $\mathbb{O}$.
\end{remark}
Let $\Omega$ be a circularly connected domain and $f:\Omega\rightarrow\mathbb{O}$ a slice function.
Then the stem vector (as the solution to \eqref{eq-slice-function}) is independent of $I\in \mathbb{S}(\Omega,a,b)$ for any fixed $a,b$, since $\mathbb{S}(\Omega,a,b)$ is connected.
Hence the $I$-slice stem function is essentially independent of $I$ as well, namely
\begin{equation}\label{eq-compatibility-of-stem-functions}
(u^{I_1},v^{I_1})\equiv(u^{I_2},v^{I_2}) \quad\text{on } \Omega^{I_1}\cap\Omega^{I_2}
\end{equation}
for any $I_1,I_2\in \mathbb{S}$, which indicates the following conclusion.
\begin{theorem}\label{thm-complete-slice-on-circularly-connected-domain}
Every slice function on a circularly connected domain is a complete slice function.
\end{theorem}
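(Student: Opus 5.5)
The plan is to check the definition of complete slice function directly: for any $I_1,I_2\in\mathbb{S}$ we must show $(u^{I_1},v^{I_1})\equiv(u^{I_2},v^{I_2})$ on $\Omega^{I_1}\cap\Omega^{I_2}$. Fix $z=\alpha+\beta i$ in $\Omega^{I_1}\cap\Omega^{I_2}$, so that $z_{I_1}=\alpha+\beta I_1$ and $z_{I_2}=\alpha+\beta I_2$ both lie in $\Omega$. We then split into three cases according to the sign of $\beta$.

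\emph{Case $\beta=0$.} Here $z_{I_1}=z_{I_2}=\alpha\in\Omega\cap\mathbb{R}$. By the convention for real points, both stem vectors equal $(f(\alpha),0)$. Definition~\ref{def-stem-function} then gives $(u^{I_1}(z),v^{I_1}(z))=(u^{I_2}(z),v^{I_2}(z))=(f(\alpha),0)$; the $\pm$ sign is irrelevant since $v=0$.

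\emph{Case $\beta>0$.} Set $a=\alpha$ and $b=\beta$. Then $I_1,I_2\in\mathbb{S}(\Omega,a,b)$, which is nonempty and, because $\Omega$ is circularly connected, connected. It therefore has a single connected component $\Lambda=\mathbb{S}(\Omega,a,b)$. Definition~\ref{def-slice-function} yields one pair $(u,v)$ with $f(a+bI)=u+Iv$ for all $I\in\Lambda$. By the uniqueness remark after that definition, which rests on $\Lambda$ being infinite and $\mathbb{O}$ being a division algebra, the stem vectors satisfy $(u_{z_{I_1}},v_{z_{I_1}})=(u_{z_{I_2}},v_{z_{I_2}})=(u,v)$. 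Since $z\in\Omega^{I_1}_+\cap\Omega^{I_2}_+$, the stem functions take the $+$ sign, and they agree at $z$.

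\emph{Case $\beta<0$.} Write $z_{I_k}=\alpha+|\beta|(-I_k)$, so we apply the previous argument with $a=\alpha$, $b=|\beta|$ and the units $-I_1,-I_2\in\mathbb{S}(\Omega,a,b)$. Connectedness again gives $(u_{z_{I_1}},v_{z_{I_1}})=(u_{z_{I_2}},v_{z_{I_2}})$. Now $z\in\Omega^{I_k}_-$, so $(u^{I_k}(z),v^{I_k}(z))=(u_{z_{I_k}},-v_{z_{I_k}})$, and the two stem functions still agree.

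There is no serious obstacle. The only points needing care are bookkeeping: the representation $x=a+bI$ with $b>0$ uses $-I$ when $\beta<0$, and the sign convention in Definition~\ref{def-stem-function} must be matched to that representation. Once the sign is handled consistently, the three cases cover $\Omega^{I_1}\cap\Omega^{I_2}$, which proves \eqref{eq-compatibility-of-stem-functions} and hence that $f$ is complete slice.
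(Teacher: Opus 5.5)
Your proof is correct and follows the same route as the paper. Connectedness of each $\mathbb{S}(\Omega,a,b)$ leaves a single component, so all points $a+bI$ share one stem vector, and the $I$-slice stem functions therefore agree on overlaps; your case split on the sign of $\beta$ (using $-I_k$ when $\beta<0$ and matching the $\pm$ convention of Definition~\ref{def-stem-function}) just makes explicit the bookkeeping the paper leaves implicit.
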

Combing the theorem above with Remark \ref{rem-local-axially-connectness} yields that every slice function is locally a complete slice function. This observation leads to the definition of a local counterpart of the concept ``global stem function".
\begin{definition}\label{def-local-stem-function}
Let $f:\Omega\rightarrow\mathbb{O}$ be a slice function and $B$ be an arbitrary open ball in $\Omega$. The local stem function of $f$ on $B$ is defined as the global stem function of $f\rvert_B$, and denoted by $(u_B,v_B)$.
\end{definition}
Here $f\rvert_B$ stands for the restriction of $f$ on $B$.
\begin{remark}\label{rem-exit-of-local-stem-function}
Moreover, by Remark \ref{rem-exist-of-global-stem-function} we see the local stem function $(u_B,v_B)$ is an even-odd pair on its definition domain $\cup_{I\in\mathbb{S}}B^I$, and
\begin{equation}\label{eq-local-expression-of-f-by-stem-function}
f(z_I)=u_B(z)+Iv_B(z)
\end{equation}
holds for all $z$ and $I$ with $z_I\in B$.
\end{remark}
To better understand the concepts and propositions related to slice functions on arbitrary domains, we intend to give a concrete example. We consider the domain $\Omega$ made up of the following three parts:
$$
\Omega_0=\{x\in \mathbb{O}|\lvert \Im(x)\rvert< 1\}
$$
and
$$
\Omega_{\pm}=\mathbb{R}\pm \{tI|t\in[1,+\infty), I\in \mathbb{S}\text{ with } \arg(I_0,I)<\pi/4\}
$$
where $I_0$ is a fixed imaginary unit, and $\arg(\cdot,\cdot)$ represents the included angle between two assigned octonions. Then we define a continuous slice function on $\Omega$ as
$$
f(x)=\left\{\begin{array}{ll}
              0, & \text{ if }x\in \Omega_0,\\
               &  \\
             \pm x( \lvert \Im(x)\rvert-1),& \text{ if }x\in \Omega_\pm.
            \end{array}\right.
$$
Indeed, one can easily verify that for any given $a\in\mathbb{R}$ and $b\in(0,1)$, we have $\mathbb{S}(\Omega,a,b)=\mathbb{S}$, and the only solution to \eqref{eq-slice-function} is $(u,v)=(0,0)$; while for $a\in\mathbb{R}$ and $b\in[1,+\infty)$, $\mathbb{S}(\Omega,a,b)$ always has two connected components
$$\Lambda_{\pm}:=\{ I\in\mathbb{S}| \arg(\pm I_0,I)<\pi/4\},$$
and the solution to \eqref{eq-slice-function} is
$$(u,v)=\pm(a(b-1), b(b-1))$$
under the restriction $I\in\Lambda_{\pm}$.
Thus all the stems vectors of $f$ are determined. Explicitly, we see
$$(u_x,v_x)=(0,0)\quad\text{for }x\in \Omega_0,$$
and
$$
(u_x,v_x)=\pm\left(\Re(x)( \lvert \Im(x)\rvert-1), \lvert \Im(x)\rvert( \lvert \Im(x)\rvert-1)\right)\quad\text{for }x\in \Omega_\pm.
$$
It follows immediately that if $\arg(I_0,I)<\pi/4$, then by Definition \ref{def-stem-function} we have $\Omega^I=\mathbb{C}$, and
\begin{equation}\label{eq-expression-on-lambda-plus}
(u^{I}(z),v^{I}(z))=
\left\{\begin{array}{ll}
(0,0),& \text{ if } |\beta| < 1,\\
&\\
(\alpha(\beta-1), \beta(\beta-1)), &  \text{ if } \beta \geq +1, \\
&\\
(\alpha(\beta+1), \beta(\beta+1)), &  \text{ if } \beta \leq -1.
\end{array}\right.
\end{equation}
where $\alpha$ and $\beta$ are the real and imaginary parts of $z\in\mathbb{C}$ respectively.
Similarly,  we have
\begin{equation}\label{eq-expression-on-lambda-minus}
 (u^{-I}(z),v^{-I}(z))=
\left\{\begin{array}{ll}
(0,0),& \text{ if } |\beta| < 1,\\
&\\
(-\alpha(\beta-1), -\beta(\beta-1)) , &  \text{ if } \beta \geq +1,\\
&\\
(-\alpha(\beta+1), -\beta(\beta+1)), &  \text{ if } \beta \leq -1.
\end{array}\right.
\end{equation}
Based on the expressions above, we can tell that the $I$-slice stem function $(u^I,v^I)$ in this example is not an even-odd pair, and $(u^{\pm I},v^{\pm I})$ do satisfy \eqref{eq-symmetry-of-stem-function}, which confirms what we claimed in Remark \ref{rem-symmetry-of-stem-function}.

Now we choose $B$ in Definition \ref{def-local-stem-function} to be the open ball centered around $2I_0$ with radius $1$, then stitching together all the $I$-slice stem functions $(u^I,v^I)$ on nonempty $B^I$ ($I\in\mathbb{S}$) should results in the local stem function $(u_B,v_B)$. At first sight it appears impossible, because $(u^I,v^I)$ and $(u^{-I},v^{-I})$ apparently fail to satisfy the vital compatibility condition \eqref{eq-compatibility-of-stem-functions} at every point $z=\alpha+i\beta$ with $|\beta|>1$.
This may make one wonder what is exactly going on for the compatibility of the $I$-slice stem functions $(u^I,v^I)$ on nonempty $B^I$ ($I\in\mathbb{S}$).
In fact, there are essentially three possible cases in this example.
Case 1: $I_1$ and $I_2$ are both contained in the component $\Lambda_{+}$, then $(u^{I_1},v^{I_1})$ and $(u^{I_2},v^{I_2})$ share the same expression \eqref{eq-expression-on-lambda-plus}, meaning they are compatible.
Case 2: $I_1$ and $I_2$ are contained in the component $\Lambda_{-}$, then $(u^{I_1},v^{I_1})$ and $(u^{I_2},v^{I_2})$ share the same expression \eqref{eq-expression-on-lambda-minus}, meaning they are compatible as well.
Case 3: $I_1$ and $I_2$ are contained by the components $\Lambda_{+}$ and $\Lambda_{-}$ separately, then $B^{I_1}$ is located above the real axis, while $B^{I_2}$ is below the real axis, implying $B^{I_1}\cap B^{I_2}=\emptyset$; thereby, the compatibility condition \eqref{eq-compatibility-of-stem-functions} trivially holds on $B^{I_1}\cap B^{I_2}$. In conclusion, all $(u^I,v^I)$ on nonempty $B^I$ ($I\in\mathbb{S}$) are compatible, which ensures the existence of the local stem function $(u_B,v_B)$.
Furthermore, the explicit expression of $(u_B,v_B)$ is as follows:
\begin{equation*}
(u_B(z),v_B(z))=
\left\{
\begin{array}{ll}
(\alpha(\beta-1), \beta(\beta-1)), &  \text{ if } \beta > +1, \\
&\\
(-\alpha(\beta+1), -\beta(\beta+1)), &  \text{ if } \beta < -1,
\end{array}
\right.
\end{equation*}
for $z=\alpha+i\beta\in\cup_{I\in\mathbb{S}}B^I$.
According with Remark \ref{rem-exit-of-local-stem-function}, the local stem function $(u_B,v_B)$ is indeed an even-odd pair, even though every $I$-slice stem function $(u^I,u^I)$ is not an even-odd pair on the whole plane $\mathbb{C}$ for $I\in\Lambda_+\cup\Lambda_-$.

\subsection{Criteria for differentiable slice functions}
There exist multiple criteria for slice functions on axially symmetric domains, e.g., Lemma 3.2 in \cite{Ghiloni-2014}, Lemmas 13 and 44 in \cite{Ghiloni-2021}.
Although most of them are no longer valid in the non-axially-symmetric case,  Lemma 13 in \cite{Ghiloni-2021}, known as the differential sliceness criterion, fits in perfectly with our setting.

The so-called spherical tangential differential operators are given as
$$
L_{mn}=x_m\frac{\partial}{\partial x_n}-x_n\frac{\partial}{\partial x_m}
$$
where $m,n\in\{1,2,\cdots,7\}$ and $m<n$.
For convenience, we denote the set of all $(m,n)$ pairs which appear above by $\Theta$.
And the octonionic spherical Dirac operator (see \cite{Ghiloni-2021}) is defined by
$$
\Gamma=-\sum_{(m,n)\in\Theta}e_m(e_nL_{mn}).
$$
Note that for any octonion $x$ with non-zero imaginary part, the real vector space generated by $\{L_{mn}|_x\}_{m,n}$ is exactly the tangent space at the point $x$ of the $6$-dimensional sphere $a+b\mathbb{S}$ with $a=\Re(x)$ and $b=\lvert \Im(x)\rvert$. Furthermore, the restriction of every $L_{mn}$ to the sphere $a+b\mathbb{S}$ is a smooth tangent vector field on this very sphere.

Let $\Omega$ be an arbitrary domain in $\mathbb{O}$. By convention, a function $f:\Omega\rightarrow\mathbb{O}$ that has continuous $n$-th derivatives on $\Omega$ w.r.t real variables $x_0,x_1,\cdots,x_7$ is referred to as a function of class $\mathcal{C}^n$.
The family of such functions is denoted by $\mathcal{C}^n(\Omega, \mathbb{O})$. The following lemma is a generalized version of Lemma 12 in \cite{Ghiloni-2021}.
\begin{lem}\label{lem-Gamma-v-relation}
Assume $f:\Omega\rightarrow\mathbb{O}$ is a slice function of class $\mathcal{C}^1$.
Then for any $x\in\Omega$, we have
$$
6\Im(x)v_x=\lvert \Im(x)\rvert\Gamma f(x).
$$
\end{lem}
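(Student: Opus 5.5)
The plan is to reduce the identity to a pointwise computation along the sphere $a+b\mathbb{S}$ through $x$. That sphere is exactly where $f$ has the explicit form \eqref{eq-slice-function} with \emph{constant} stem vector. The key fact is that every $L_{mn}$ is tangent to these spheres, so $\Gamma f(x)$ only involves the values of $f$ on $a+b\Lambda$, where $\Lambda$ is the connected component of $\mathbb{S}(\Omega,a,b)$ containing $\Im(x)/\lvert\Im(x)\rvert$. Because of this, no regularity of the stem vector as a function of the point is needed.

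First I dispose of real points. If $x\in\Omega\cap\mathbb{R}$, then $\Im(x)=0$ and $v_x=0$ by convention, so both sides vanish. Here $\Gamma f(x)$ is finite because $f\in\mathcal{C}^1$, and it is in fact zero since every $L_{mn}$ vanishes at real points.

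Now let $x=a+bI_x$ with $b>0$. On the open subset $a+b\Lambda$ of the $6$-sphere we have $f(y)=u+\frac{\Im(y)}{b}v$ with $(u,v)=(u_x,v_x)$ constant. Since $L_{mn}|_y$ is tangent to the sphere and $f$ is $\mathcal{C}^1$, the value $L_{mn}f(x)$ equals the derivative along the sphere of this restriction. Hence
\[
L_{mn}f(x)=\frac{1}{b}\,L_{mn}\Bigl(\sum_k x_ke_k\Bigr)\,v=\frac{1}{b}(x_me_n-x_ne_m)v .
\]
It remains to evaluate $-\sum_{(m,n)\in\Theta}e_m\bigl(e_n((x_me_n-x_ne_m)v)\bigr)$ using octonionic identities, taking care with nonassociativity. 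For the first term, the alternative law gives $e_n(e_nv)=-v$, so $e_m(e_n(x_me_nv))=-x_me_mv$. For the second term, the Moufang identity $a(b(av))=(aba)v$ gives $e_m(e_n(e_mv))=(e_me_ne_m)v$. Since $e_m,e_n$ anticommute for $m\neq n$, we have $e_me_ne_m=e_n$. Therefore each summand equals $-(x_me_m+x_ne_n)v$, and
\[
\Gamma f(x)=\frac{1}{b}\sum_{(m,n)\in\Theta}(x_me_m+x_ne_n)v .
\]
Each index $k\in\{1,\dots,7\}$ occurs in exactly $6$ pairs of $\Theta$, so the sum equals $6\Im(x)v$. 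Multiplying by $b=\lvert\Im(x)\rvert$ gives the claim.

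The main point needing care is justifying that $L_{mn}f(x)$ can be computed from the restriction to $a+b\Lambda$. This holds because $L_{mn}$ is a tangent vector field to the sphere, $f$ is $\mathcal{C}^1$, and $a+b\Lambda$ is open in the sphere. Beyond that, the only delicate part is the correct use of the Moufang identity in place of associativity.
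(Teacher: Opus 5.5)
Your proposal is correct and follows essentially the same route as the paper. The paper also restricts $f$ to the open piece $a+b\Lambda$ of the sphere, where $f=u_x+\frac{\Im(\cdot)}{b}v_x$, and uses the tangency of $L_{mn}$ to get $L_{mn}f(x)=\frac{1}{b}(x_me_n-x_ne_m)v_x$. It then collapses $\Gamma f(x)$ with the same identities: $e_m(e_n(e_nv))=-e_mv$ and the Moufang identity $e_m(e_n(e_mv))=(e_me_ne_m)v=e_nv$, with each index appearing in six pairs of $\Theta$. Your explicit check of real points, which the paper dismisses as the trivial case, is a harmless and correct addition.
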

\begin{proof} The proof of this lemma is quite similar to that of Lemma 12 in \cite{Ghiloni-2021}.
So some details are omitted. For convenience, we replace the symbol $x$ by $y$, and proceed to prove
$$
6\Im(y)v_y=\lvert \Im(y)\rvert\Gamma f(y)
$$
holds for any given $y\in\Omega$.
Without loss of generality, we may assume $\Im(y)\neq 0$.
Let $\Lambda$ be the connected component containing $\frac{\Im(y)}{\lvert \Im(y)\rvert}$  of $\mathbb{S}(\Omega,a,b)$, where $a=\Re(y)$ and $b=\lvert \Im(y)\rvert$.
Recall that the stem vector $(u_y,v_y)$ is the solution to \eqref{def-slice-function} under the restriction $I\in\Lambda$, that is to say,
\begin{equation*}
 f(a+bI)= u_y+Iv_y
\end{equation*}
holds for all $I\in\Lambda$.
We notice that $a+b\Lambda$ is an open sub-manifold of the sphere $a+b\mathbb{S}$, and $I$ can be considered as a smooth function on $a+b\Lambda$ with expression as
\begin{equation*}
I=\frac{\Im(x)}{b} \quad\text{for } x\in a+b\Lambda.
\end{equation*}
Moreover, $L_{mn} f(x)$ can be identified with $L_{mn}|_x(f|_{a+b\Lambda})$ for any $x\in a+b\Lambda$,
since $L_{mn}|_x$ is a tangent vector to the sub-manifold $a+b\Lambda$ at the point $x$.
It follows immediately that
\begin{equation}\label{eq-Gamma-on-slice-1}
L_{mn} f(x)=L_{mn}|_x(u_y+Iv_y)=(L_{mn}|_xI)v_y.
\end{equation}
A direct calculation yields
$$L_{mn}|_xI=\frac{x_me_n-x_ne_m}{b}. $$
Substituting this into \eqref{eq-Gamma-on-slice-1}, we thus obtain
$$
L_{mn} f(x)=\frac{x_me_n-x_ne_m}{b}v_y,
$$
indicating
\begin{equation*}
\Gamma f(x)=-\sum_{(m,n)\in\Theta}e_m\left(e_n \left(\frac{x_me_n-x_ne_m}{b}v_y\right)\right).
\end{equation*}
As shown in the proof of Lemma 12 in \cite{Ghiloni-2021}, the so-called Moufang identities and Artin's theorem yield that
$$e_m(e_n(e_ma))=(e_me_ne_m)a=e_n a$$
and
$$e_m(e_n(e_na))=e_m((e_ne_n)a)=-e_ma$$
hold for any $a\in\mathbb{O}$.
Therefore,
\begin{equation*}
\Gamma f(x)=\sum_{(m,n)\in\Theta}\frac{x_me_m+x_ne_n}{b}v_y=\frac{6\Im(x)}{b}v_y \quad\text{for } x\in a+b\Lambda.
\end{equation*}
Taking $x=y$ completes the proof.
\end{proof}
Let $\Omega$ be an arbitrary domain in $\mathbb{O}$.
Recall that $\mathbb{S}(\Omega,a,b)=\{I\in\mathbb{S}|a+bI\in\Omega\}$ for any $a\in \mathbb{R}$ and $b\in(0,+\infty)$.
Assume $f:\Omega\rightarrow\mathbb{O}$ is a slice function of class $\mathcal{C}^1$. Then based on the technical lemma above, we see
\begin{equation}\label{eq-stem-function-of-differentiable-funciton}
(u_x,v_x)=\left(f(x)-\frac{1}{6}\Gamma f(x),\frac{\lvert \Im(x)\rvert}{6\Im(x)}\Gamma f(x)\right)
\end{equation}
for any $x\in\Omega$ with nonzero imaginary part.
This observation leads to a criterion for differentiable slice functions.
\begin{theorem}\label{thm-criterion-for-differentiable-slice-functions}
Assume $f:\Omega\rightarrow\mathbb{O}$ is a function of class $\mathcal{C}^1$. Then the following statements are equivalent.
\begin{enumerate}
\item[i.] $f$ is a slice function.
\item[ii.] For any given $a\in \mathbb{R}$ and $b\in(0,+\infty)$, the restrictions of $f-\frac{1}{6}\Gamma f$ and  $\Im^{-1}\Gamma f$ on $a+b\Lambda$ are always constants, where $\Lambda$ runs over all connected components of $\mathbb{S}(\Omega,a,b)$.
\end{enumerate}
\end{theorem}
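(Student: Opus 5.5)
The plan is to prove the two implications separately. The direction i $\Rightarrow$ ii is essentially the content of Lemma \ref{lem-Gamma-v-relation} and formula \eqref{eq-stem-function-of-differentiable-funciton}. The direction ii $\Rightarrow$ i reconstructs the stem vector from the two constants and then checks the representation \eqref{eq-slice-function} by differentiating along the sphere.

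\textbf{i $\Rightarrow$ ii.} Fix $a\in\mathbb{R}$, $b>0$ and a connected component $\Lambda$ of $\mathbb{S}(\Omega,a,b)$. Every point $x\in a+b\Lambda$ has the same stem vector $(u,v)$, namely the one attached to $\Lambda$. By \eqref{eq-stem-function-of-differentiable-funciton},
$$f(x)-\tfrac16\Gamma f(x)=u \qquad\text{and}\qquad \Gamma f(x)=\tfrac{6}{b}\Im(x)v=6Iv, \quad\text{where } I=\Im(x)/b.$$
The first quantity is therefore constant on $a+b\Lambda$.

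For $\Im^{-1}\Gamma f$, I read $\Im^{-1}$ as left multiplication by $\Im(x)^{-1}=-I/b$. Then
$$\Im(x)^{-1}\Gamma f(x)=-\tfrac{6}{b}I(Iv)=\tfrac{6}{b}v,$$
using Artin's theorem ($I(Iv)=(II)v=-v$). This is constant on $a+b\Lambda$, as required.

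\textbf{ii $\Rightarrow$ i.} Fix $a,b,\Lambda$ as before. By hypothesis there are constants $u,w\in\mathbb{O}$ with
$$f-\tfrac16\Gamma f\equiv u \qquad\text{and}\qquad \Im^{-1}\Gamma f\equiv w \quad\text{on } a+b\Lambda.$$
Put $v:=\tfrac{b}{6}w$. On $a+b\Lambda$ we have $\Im(x)=bI$, so
$$\Gamma f(x)=\Im(x)\,w=bIw=6Iv$$
(left multiplication by $\Im(x)$ undoes left multiplication by its inverse, again by alternativity). Hence
$$f(a+bI)=u+\tfrac16\Gamma f(a+bI)=u+Iv \qquad\text{for all } I\in\Lambda,$$
which is exactly \eqref{eq-slice-function} on $\Lambda$. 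Since $\Lambda$, $a$ and $b$ were arbitrary, $f$ is a slice function.

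\textbf{Expected obstacle.} There is no real analytic obstacle. The one delicate point is purely algebraic: interpreting $\Im^{-1}\Gamma f$ as $\Im(x)^{-1}\cdot\Gamma f(x)$ and justifying $\Im(x)\bigl(\Im(x)^{-1}y\bigr)=y$ in the non-associative algebra $\mathbb{O}$. This follows from Artin's theorem, because the subalgebra generated by $I$ and $y$ is associative.

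Two further remarks:
\begin{enumerate}
\item[(a)] Real points need no separate treatment, since Definition \ref{def-slice-function} only imposes conditions for $b>0$.
\item[(b)] The $\mathcal{C}^1$ hypothesis is used only in i $\Rightarrow$ ii, through Lemma \ref{lem-Gamma-v-relation}. For ii $\Rightarrow$ i, $\mathcal{C}^1$ merely guarantees that $\Gamma f$ is defined.
\end{enumerate}
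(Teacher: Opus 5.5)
Your proposal is correct and follows the paper's proof essentially verbatim. Necessity is read off from Lemma~\ref{lem-Gamma-v-relation} via \eqref{eq-stem-function-of-differentiable-funciton}, and sufficiency sets $v=\frac{b}{6}w$ and splits $f=(f-\frac16\Gamma f)+\frac16\Gamma f$ on $a+b\Lambda$. The only difference is that you make explicit the left-multiplication reading of $\Im^{-1}$ and the alternativity step $\Im(x)\bigl(\Im(x)^{-1}y\bigr)=y$, both of which the paper uses silently.
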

To avoid possible misunderstandings, we would like to mention that $\Im^{-1}$ refers to the pointwise reciprocal of $\Im$ throughout this paper.
\begin{proof}
The necessity is obviously valid, since $\lvert \Im(x)\rvert\equiv b$ on $a+b\Lambda$ and the left side of \eqref{eq-stem-function-of-differentiable-funciton}  are indeed two constants on $a+b\Lambda$ provided that $f$ is a slice function. It only remains for us to show the sufficiency. Suppose that the restrictions of $f-\frac{1}{6}\Gamma f$ and  $\Im^{-1}\Gamma f$ on $a+b\Lambda$ are two constants, denoted as $u$ and $v'$ respectively. Now set $v=\frac{1}{6}bv'$, and we see
$$
f(x)=(f(x)-\frac{1}{6}\Gamma f(x))+\frac{1}{6}\Gamma f(x)=u+\frac{\Im(x)}{\lvert \Im(x)\rvert}v
$$
for any $x\in a+b\Lambda$, or equivalently,
$f(a+bI)=u+Iv$
for any $I\in\Lambda$. Therefore, $f$ is a slice function according to Definition \ref{def-slice-function}.
\end{proof}

A generalized version of the differential sliceness criterion, i.e., Lemma 13 in \cite{Ghiloni-2021}  follows immediately from the previous theorem.
\begin{corollary}\label{cor-criterion-for-differentiable-slice-functions}
Assume $f:\Omega\rightarrow\mathbb{O}$ is a function of class $\mathcal{C}^2$. Then the following statements are equivalent.
\begin{enumerate}
\item[i.]$f$ is a slice function.
\item[ii.]For any $(m,n)\in\Theta$, $L_{mn}(f-\frac{1}{6}\Gamma f)=L_{mn}(\Im^{-1}\Gamma f)=0$ on $\Omega\setminus\mathbb{R}$.
\end{enumerate}
\end{corollary}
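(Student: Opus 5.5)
The plan is to reduce the corollary to Theorem \ref{thm-criterion-for-differentiable-slice-functions}. Set
$$F:=f-\tfrac{1}{6}\Gamma f,\qquad G:=\Im^{-1}\Gamma f .$$
Since $f\in\mathcal{C}^2(\Omega,\mathbb{O})$ and $\Gamma$ is a first-order operator with polynomial coefficients, $F$ and $G$ are of class $\mathcal{C}^1$ on $\Omega\setminus\mathbb{R}$. So $L_{mn}F$ and $L_{mn}G$ make sense pointwise there; this is the only place the $\mathcal{C}^2$ hypothesis is used. The claim is that statement ii of the corollary is equivalent to statement ii of Theorem \ref{thm-criterion-for-differentiable-slice-functions}, namely that $F$ and $G$ are constant on each $a+b\Lambda$.

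\emph{The sphere-wise reduction.} Fix $a\in\mathbb{R}$ and $b>0$, and let $\Lambda$ be a connected component of $\mathbb{S}(\Omega,a,b)$. Then $a+b\Lambda$ is an open subset of the sphere $a+b\mathbb{S}$, and it is connected, being the image of $\Lambda$ under $I\mapsto a+bI$. As noted before Lemma \ref{lem-Gamma-v-relation}, at each $x\in a+b\Lambda$ the vectors $\{L_{mn}|_x\}_{(m,n)\in\Theta}$ span the tangent space $T_x(a+b\mathbb{S})=T_x(a+b\Lambda)$. Moreover, for any $\mathcal{C}^1$ function $H$ defined near $x$, the value $L_{mn}H(x)$ equals $L_{mn}|_x(H|_{a+b\Lambda})$.

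\emph{Direction i $\Rightarrow$ ii.} If $f$ is slice, Theorem \ref{thm-criterion-for-differentiable-slice-functions} shows that $F$ and $G$ are constant on each $a+b\Lambda$. Hence every tangential derivative of $F|_{a+b\Lambda}$ and $G|_{a+b\Lambda}$ vanishes, and in particular $L_{mn}F=L_{mn}G=0$ there. Every $x\in\Omega\setminus\mathbb{R}$ lies on such an $a+b\Lambda$, with $a=\Re(x)$, $b=\lvert\Im(x)\rvert$ and $\Lambda$ the component containing $\Im(x)/b$. This gives ii.

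\emph{Direction ii $\Rightarrow$ i.} Conversely, suppose $L_{mn}F=L_{mn}G=0$ on $\Omega\setminus\mathbb{R}$. Then on each $a+b\Lambda$ the restrictions $F|_{a+b\Lambda}$ and $G|_{a+b\Lambda}$ are $\mathcal{C}^1$ functions on a connected manifold. Their differentials annihilate a spanning set of every tangent space, so the differentials vanish identically. Therefore both restrictions are locally constant, and by connectedness they are constant. This is statement ii of Theorem \ref{thm-criterion-for-differentiable-slice-functions}, so $f$ is a slice function.

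No step is genuinely hard. The only points needing care are:
\begin{itemize}
\item the connectedness of $a+b\Lambda$, which is immediate since it is the image of the connected set $\Lambda$;
\item the spanning property of the $L_{mn}$ on the $6$-sphere, already stated in the paper;
\item the regularity of $G$, which is $\mathcal{C}^1$ away from $\mathbb{R}$ because $\Im^{-1}$ is smooth wherever $\Im(x)\neq0$.
\end{itemize}
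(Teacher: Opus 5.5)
Your proposal is correct and follows essentially the same route as the paper. Both reduce the corollary to Theorem \ref{thm-criterion-for-differentiable-slice-functions} using the same ingredients: the spanning property of the $L_{mn}|_x$ on the tangent spaces of $a+b\mathbb{S}$, the fact that each $a+b\Lambda$ is an open connected submanifold of that sphere, and the fact that these sets together cover $\Omega\setminus\mathbb{R}$.
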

\begin{proof}
Recall that for any given $a\in\mathbb{R}$, $b\in(0,+\infty)$ and any $x\in a+b\mathbb{S}$ , the real vector space generated by $\{L_{mn}|_x\}_{m,n}$ is exactly the tangent space to the sphere $a+b\mathbb{S}$ at the point $x$; and $a+b\Lambda$ is an open (connected) sub-manifold of
$a+b\mathbb{S}$ for every component $\Lambda$ of $\mathbb{S}(\Omega,a,b)$.
Hence, under the assumption that $f$ is $\mathcal{C}^2$, the restrictions of $f-\frac{1}{6}\Gamma f$ and  $\Im^{-1}\Gamma f$ on $a+b\Lambda$ are constants iff
$L_{mn}(f-\frac{1}{6}\Gamma f)=L_{mn}(\Im^{-1}\Gamma f)=0$ on $a+b\Lambda$ for any $(m,n)\in\Theta$. In addition, the union of all $a+b\Lambda$ is exactly $\Omega\setminus\mathbb{R}$, because every $x\in\Omega\setminus\mathbb{R}$ can be represented as $a+bI$, where $a=\Re(x)$, $b=\Im(x)$ and $I=\frac{\Im(x)}{\lvert \Im(x)\rvert}\in$ a certain $\Lambda$. What we have seen here indicates
the second assertion in this corollary is equivalent with that of Theorem \ref{thm-criterion-for-differentiable-slice-functions}. This completes the proof.
\end{proof}

\subsection{Slice Fueter-regular functions on arbitrary domains}\label{subsection-slice-Fueter-function}
A special differential operator strongly related to slice Fueter (or Dirac) regular functions, called slice Fueter operator, has been introduced by R. Ghiloni in \cite{Ghiloni-2021} as follows.
\begin{equation}
\overline{\partial}_F=\frac{\partial}{\partial x_0}-\Im^{-1}E-\frac{1}{3}\Im^{-1}\Gamma,
\end{equation}
where $E$ is the Euler operator $\sum_{l=1}^{7}x_l\frac{\partial}{\partial x_l}$. 
With certain previous content, now we are in a position to reintroduce the concept of slice Fueter-regular function in a wider framework.
Here $\Omega$ stands for an arbitrary domain in $\mathbb{O}$ as above.
\begin{definition}
A slice function $f:\Omega\rightarrow\mathbb{O}$ of class $\mathcal{C}^1$ is called a $($left$)$ slice Fueter-regular function if $\overline{\partial}_Ff\equiv 0$ on $\Omega\setminus\mathbb{R}$.
\end{definition}
Now we intend to give a concrete example of a slice Fueter-regular function on a non-axially-symmetric domain which is not a complete slice function. Its existence shows that not every slice Fueter-regular function has a slice Fueter-regular extension on an axially symmetric domain, since all slice functions on axially symmetric domains are complete slice ones. The detailed procedure of its construction and verification is quite lengthy. So we only present an outline here for the sake of simplicity.

Step $1$: Prepare the stem functions. By convention, the principal argument of a complex number $z$, written as $\arg(z)$, is restricted within the interval $(-\pi,+\pi]$.
For convenience, we let $\alpha$ and $\beta$ stand for the real and imaginary parts of $z$ respectively.
Now we construct a pair of real analytic functions on $O^+:=\{z\in\mathbb{C}|(z-2i)\notin(-\infty,0]\text{ and }\beta>0\}$ as follows.
\begin{equation}\label{eq-example-slice-Fueter-regular-stem-function-1}
\begin{split}
u(z):=&\frac{(\beta-2)\cos\left(\frac{\arg(z-2i)}{2}\right)-\alpha\sin\left(\frac{\arg(z-2i)}{2}\right)}{\beta(\alpha^2+(\beta-2)^2)^\frac{3}{4}}
,\\
   v(z):=&\frac{\alpha\cos\left(\frac{\arg(z-2i)}{2}\right)+(\beta-2)\sin\left(\frac{\arg(z-2i)}{2}\right)}{\beta(\alpha^2+(\beta-2)^2)^\frac{3}{4}} \\
     & -\frac{2(\alpha^2+(\beta-2)^2)^\frac{1}{4}\sin\left(\frac{\arg(z-2i)}{2}\right)}{\beta^2}.
\end{split}
\end{equation}
And it is not difficult to verify that
\begin{equation}\label{eq-example-slice-Fueter-regular-stem-function-2}
\begin{split}
\tilde{u}(z):=&\left\{\begin{array}{ll}
                        \text{sgn}(\beta-2)u(z), & \text{when } \beta\neq 2, \\
                        &\\
                         \frac{1}{2}\lvert\alpha\rvert^{-\frac{1}{2}}, &  \text{otherwise},
                      \end{array}\right.\\
\tilde{v}(z):=&\left\{\begin{array}{ll}
                        \text{sgn}(\beta-2)v(z), & \text{when } \beta\neq 2, \\
                        &\\
                         -\frac{1}{2}\lvert\alpha\rvert^{\frac{1}{2}}, &  \text{otherwise},
                      \end{array}\right.
\end{split}
\end{equation}
are real analytic on another domain $O^-:=\{z\in\mathbb{C}|(z-2i)\notin[0,+\infty)\text{ and }\beta>0\}$ by observing the complex analytic branches of $\sqrt{z-2i}$ on $O^-$.

Step $2$: Give the local definitions and check their compatibility.
We pick two orthogonal imaginary units $I,J$; and construct a path from $-J$ to $J$ across $I$ as
$$
\phi(\theta)=I\cos\frac{\theta}{2}+J\sin\frac{\theta}{2}\quad\text{for }\theta\in[-\pi,\pi].
$$
For each domain
$$B_\theta:=\left\{x\in\mathbb{O}\left|\lvert x-2\phi(\theta)-\exp(\theta\phi(\theta))\rvert<\frac{1}{4}\right.\right\},$$
we define a corresponding real analytic function $f_\theta: B_\theta\rightarrow\mathbb{O}$ in the following way.
In the first case that $\theta\in[-\frac{5\pi}{6},\frac{5\pi}{6}]$,
\begin{equation}\label{eq-example-slice-Fueter-regular-local-1}
 f_\theta(x):=u(z_x)+\frac{\Im(x)}{\lvert \Im(x)\rvert}v(z_x).
\end{equation}
In the second case that $\theta\in[-\pi,-\frac{5\pi}{6})$,
\begin{equation}\label{eq-example-slice-Fueter-regular-local-2}
 f_\theta(x):=-\tilde{u}(z_x)-\frac{\Im(x)}{\lvert \Im(x)\rvert}\tilde{v}(z_x).
\end{equation}
In the final case that $\theta\in(\frac{5\pi}{6},\pi]$,
\begin{equation}\label{eq-example-slice-Fueter-regular-local-3}
 f_\theta(x):=\tilde{u}(z_x)+\frac{\Im(x)}{\lvert \Im(x)\rvert}\tilde{v}(z_x).
\end{equation}
Here $z_x$ denotes $\Re(x)+i\lvert \Im(x)\rvert$.
We claim that these functions are pairwise compatible, i.e., $f_{\theta_1}\equiv f_{\theta_2}$ on $B_{\theta_1}\cap B_{\theta_2}$ for any $\theta_1,\theta_2\in [-\pi,\pi]$. Without loss of generality, we may assume $\theta_1<\theta_2$ and $B_{\theta_1}\cap B_{\theta_2}\neq\emptyset$, which implies $0<\theta_2-\theta_1<\frac{\pi}{3}$. Firstly, if $\theta_1$ and $\theta_2$ both belong to one of the three intervals $[-\pi,-\frac{5\pi}{6})$, $[-\frac{5\pi}{6},\frac{5\pi}{6}]$ and $(\frac{5\pi}{6},\pi]$, then there is nothing to prove. Secondly, if $\theta_1\in[-\pi,-\frac{5\pi}{6})$ and $\theta_2\in[-\frac{5\pi}{6},-\frac{\pi}{2})$, then $|\Im(x)|<2$ for any $x\in B_{\theta_1}\cap B_{\theta_2}$. By \eqref{eq-example-slice-Fueter-regular-stem-function-2}, we thus have
\begin{equation*}
   u(z_x)=-\tilde{u}(z_x) \text{ and } v(z_x)=-\tilde{v}(z_x),
\end{equation*}
indicating $f_{\theta_1}(x)= f_{\theta_2}(x)$ for all $x\in B_{\theta_1}\cap B_{\theta_2}$ due to \eqref{eq-example-slice-Fueter-regular-local-1} and \eqref{eq-example-slice-Fueter-regular-local-2}.
Thirdly, if $\theta_1\in(\frac{\pi}{2},\frac{5\pi}{6}]$ and $\theta_2\in(\frac{5\pi}{6},\pi]$, then
$|\Im(x)|>2$ for any $x\in B_{\theta_1}\cap B_{\theta_2}$, which similarly leads to the conclusion that $f_{\theta_1}(x)= f_{\theta_2}(x)$ still holds for all $x\in B_{\theta_1}\cap B_{\theta_2}$. Therefore,
the functions $f_{\theta}: B_\theta\rightarrow\mathbb{O}$ ($\theta\in[-\pi,\pi]$) are indeed pairwise compatible as claimed. Subsequently, combining them results in a real analytic slice function on the domain $\Omega:=\cup_{\theta\in[-\pi,\pi]}B_\theta$. We denote this function by $f$.

Step $3$: Verify the slice Fueter-regularity.
It suffices to prove $\overline{\partial}_Ff_0\equiv 0$ on $B_{0}$, because in this example $\overline{\partial}_Ff$ is real analytic (w.r.t. $x_0,x_1,\cdots,x_7$) on the domain $\Omega$.
Differentiating both sides of \eqref{eq-example-slice-Fueter-regular-local-1} yields
\begin{equation}\label{eq-example-slice-Fueter-regular-local-1-D}
\begin{split}
 \frac{\partial}{\partial x_0}f_0(x)=& \frac{\partial u}{\partial \alpha}(z_x)+\frac{\Im(x)}{\lvert \Im(x)\rvert}\frac{\partial v}{\partial \alpha}(z_x),\\
 \frac{\partial}{\partial x_k}f_0(x)=&\frac{x_k}{\lvert \Im(x)\rvert}\left(\frac{\partial u}{\partial \beta}(z_x)+\frac{\Im(x)}{\lvert \Im(x)\rvert}\frac{\partial v}{\partial \beta}(z_x)\right)\\
 &+\frac{e_k\lvert \Im(x)\rvert^2-x_k\Im(x)}{\lvert \Im(x)\rvert^3}v(z_x),
\end{split}
\end{equation}
where $k=1,2,\cdots,7$. It follows immediately from the second equality above that
\begin{equation}\label{eq-example-Ef}
  \Im(x)^{-1}Ef_0(x)=-\frac{\Im(x)}{\lvert \Im(x)\rvert}\frac{\partial u}{\partial \beta}(z_x)+\frac{\partial v}{\partial \beta}(z_x).
\end{equation}
In addition, Lemma \ref{lem-Gamma-v-relation} says
$$
\Im(x)^{-1}\Gamma f_0(x)=6\lvert \Im(x)\rvert^{-1}v(z_x),
$$
which together with \eqref{eq-example-Ef} and the first equality in \eqref{eq-example-slice-Fueter-regular-local-1-D} leads to
\begin{equation}\label{eq-example-DFf}
\begin{split}
   \overline{\partial}_Ff_0(x)=& \frac{\partial u}{\partial \alpha}(z_x)-\frac{\partial v}{\partial \beta}(z_x)-2\frac{1}{\lvert \Im(x)\rvert}v(z_x)
    \\
     & +\frac{\Im(x)}{\lvert \Im(x)\rvert}\left(\frac{\partial v}{\partial \alpha}(z_x)+\frac{\partial u}{\partial \beta}(z_x)\right). \\
     &
\end{split}
\end{equation}
Based on the explicit expressions in \eqref{eq-example-slice-Fueter-regular-stem-function-1}, direct calculation gives
\begin{equation}\label{eq-example-slice-Fueter-regular-stem-function-D-1}
\begin{split}
\frac{\partial u}{\partial \alpha}(z)=&\frac{-\alpha(\beta-2)\cos\left(\frac{\arg(z-2i)}{2}\right)+\frac{1}{2}(\alpha^2-(\beta-2)^2)\sin\left(\frac{\arg(z-2i)}{2}\right)}{\beta(\alpha^2+(\beta-2)^2)^\frac{7}{4}}
,\\
\frac{\partial v}{\partial \alpha}(z)=&-\frac{\frac{1}{2}(\alpha^2-(\beta-2)^2)\cos\left(\frac{\arg(z-2i)}{2}\right)+\alpha(\beta-2)\sin\left(\frac{\arg(z-2i)}{2}\right)}{\beta(\alpha^2+(\beta-2)^2)^\frac{7}{4}} \\
     & +\frac{(\beta-2)\cos\left(\frac{\arg(z-2i)}{2}\right)-\alpha\sin\left(\frac{\arg(z-2i)}{2}\right)}{\beta^2(\alpha^2+(\beta-2)^2)^\frac{3}{4}},\\
\frac{\partial u}{\partial \beta}(z)=&+\frac{\frac{1}{2}(\alpha^2-(\beta-2)^2)\cos\left(\frac{\arg(z-2i)}{2}\right)+\alpha(\beta-2)\sin\left(\frac{\arg(z-2i)}{2}\right)}{\beta(\alpha^2+(\beta-2)^2)^\frac{7}{4}} \\
     & -\frac{(\beta-2)\cos\left(\frac{\arg(z-2i)}{2}\right)-\alpha\sin\left(\frac{\arg(z-2i)}{2}\right)}{\beta^2(\alpha^2+(\beta-2)^2)^\frac{3}{4}},\\
\frac{\partial v}{\partial \beta}(z)=&\frac{-\alpha(\beta-2)\cos\left(\frac{\arg(z-2i)}{2}\right)+\frac{1}{2}(\alpha^2-(\beta-2)^2)\sin\left(\frac{\arg(z-2i)}{2}\right)}{\beta(\alpha^2+(\beta-2)^2)^\frac{7}{4}} \\
     & -\frac{2(\beta-2)\sin\left(\frac{\arg(z-2i)}{2}\right)+2\alpha\cos\left(\frac{\arg(z-2i)}{2}\right)}{\beta^2(\alpha^2+(\beta-2)^2)^\frac{3}{4}}\\
     &+\frac{4(\alpha^2+(\beta-2)^2)^\frac{1}{4}\sin\left(\frac{\arg(z-2i)}{2}\right)}{\beta^3}.
\end{split}
\end{equation}
Taking $z=z_x$ and inserting all the equalities of  \eqref{eq-example-slice-Fueter-regular-stem-function-D-1} and the second one of  \eqref{eq-example-slice-Fueter-regular-stem-function-1} into \eqref{eq-example-DFf}, we obtain $\overline{\partial}_Ff_0(x)=0$. It completes the procedure. In the end, we would like to mention that
the stem vector of $f$ at the point $-1+2(-J)$ is $(-\frac{1}{2},\frac{1}{2})$, while that of $f$ at the point $-1+2J$ is $(\frac{1}{2},-\frac{1}{2})$, indicating $f$ is not a complete slice function; and this very example actually comes from applying Fueter's Theorem (see, e.g., \cite{Sudbery-1979}) to the multi-valued complex analytic function $\sqrt{z-2i}$.

\section{Real analyticity of slice Fueter-regular functions}
In this section we shall discuss some topics related to the real analyticity of slice Fueter-regular functions, and establish a
generalized version of maximum modulus principle as an application.
Before we begin, we would like to mention that $\Omega$ represents an arbitrary domain in $\mathbb{O}$ by default throughout this section unless further specifications are provided.
\subsection{Representation formula}
To achieve our goal, it is unavoidable for us to deal with the relation between $2$, $4$ and $8$-dimensional real analyticities. And the representation formula is required as an important technical tool.
Recall that the symbol $\mathbb{S}(\Omega,a,b)$ denotes the set $\{I\in\mathbb{S}|a+bI\in\Omega\}$ where $a\in\mathbb{R}$ and $b\in(0,+\infty)$.
\begin{lem}\label{lemma-representation-formula}
Assume $f:\Omega\rightarrow\mathbb{O}$ is a slice function, and $\Lambda$ is a connected component of some $\mathbb{S}(\Omega,a,b)$.
Then for any $I_1,I_2\in \Lambda$ with $I_1\neq I_2$, the solution $(u,v)$ to \eqref{eq-slice-function}  can be expressed as
\begin{equation*}
\left\{\begin{aligned}
u=& f(a+bI_1)-I_1\left((I_1-I_2)^{-1} (f(a+bI_1)-f(a+bI_2))\right), \\
v=&(I_1-I_2)^{-1} (f(a+bI_1)-f(a+bI_2)
\end{aligned}\right.
\end{equation*}
\end{lem}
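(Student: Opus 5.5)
Since $I_1,I_2\in\Lambda$, the defining relation \eqref{eq-slice-function} applied at both points gives the two equations
\begin{equation*}
f(a+bI_1)=u+I_1v,\qquad f(a+bI_2)=u+I_2v,
\end{equation*}
and the plan is simply to solve this $2\times 2$ system for $(u,v)$. Subtracting the second from the first and using distributivity (which holds in $\mathbb{O}$ even without associativity) gives
\begin{equation*}
f(a+bI_1)-f(a+bI_2)=(I_1-I_2)v .
\end{equation*}
Since $I_1\neq I_2$, the octonion $I_1-I_2$ is nonzero and hence invertible in the division algebra $\mathbb{O}$.

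The one point requiring care is cancelling $I_1-I_2$ despite non-associativity. Write $w=I_1-I_2$. I would invoke Artin's theorem: the subalgebra generated by the two elements $w$ and $v$ is associative. Moreover $w^{-1}=\overline{w}/\lvert w\rvert^2$ lies in that subalgebra, because $\overline{w}=2\Re(w)-w$. Hence $w^{-1}(wv)=(w^{-1}w)v=v$, and multiplying the displayed identity on the left by $w^{-1}$ yields
\begin{equation*}
v=(I_1-I_2)^{-1}\bigl(f(a+bI_1)-f(a+bI_2)\bigr).
\end{equation*}

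Finally, substituting this $v$ back into the first equation gives $u=f(a+bI_1)-I_1v$, which is exactly the stated formula for $u$. Uniqueness of $(u,v)$ was already noted after Definition \ref{def-slice-function}, so the solution of \eqref{eq-slice-function} is the pair just obtained. I expect no real obstacle beyond the associativity justification in the cancellation step above.
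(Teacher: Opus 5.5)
Your proposal is correct and follows the paper's proof exactly: write \eqref{eq-slice-function} at $I_1$ and $I_2$, subtract to get $(I_1-I_2)v=f(a+bI_1)-f(a+bI_2)$, left-multiply by $(I_1-I_2)^{-1}$, then substitute back into the $k=1$ equation. The only difference is that you justify the cancellation $w^{-1}(wv)=v$ via Artin's theorem, a step the paper passes over with ``indicating''; this makes your write-up slightly more careful than the paper's.
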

\begin{proof}
According to Definition \ref{def-slice-function}, we have
\begin{equation}\label{eq-proof-lem-represention-formula-1}
     u+I_kv=f(a+bI_k)\quad\text{for }k=1,2.
\end{equation}
Subtracting the two equalities above yields
\begin{equation*}
(I_1-I_2)v=f(a+bI_1)-f(a+bI_2),
\end{equation*}
indicating
\begin{equation*}
 v=(I_1-I_2)^{-1} (f(a+bI_1)-f(a+bI_2)).
\end{equation*}
Substituting this equality back into \eqref{eq-proof-lem-represention-formula-1} with $k=1$, we get
\begin{equation*}
     u=f(a+bI_1)-I_1\left((I_1-I_2)^{-1} (f(a+bI_1)-f(a+bI_2)\right).
\end{equation*}
The proof is completed.
\end{proof}

Recall that for any complex number $z=\alpha+\beta i$ and any imaginary unit $I\in \mathbb{S}$, the symbol $z_I$ stands for the octonion $\alpha+\beta I$.
When $\Omega$ is a circularly connected domain, an expression of the global stem function $(u_\Omega,v_\Omega)$ follows immediately from Lemma \ref{lemma-representation-formula}.
\begin{theorem}\label{thm-expression-global-stem-function}
Assume $f:\Omega\rightarrow\mathbb{O}$ is a slice function with $\Omega$ being a circularly connected domain. Then for any $z\in\mathbb{C}$ and $I_k\in\mathbb{S}$ $(k=1,2)$ satisfying $z_{I_k}\in\Omega$ $(k=1,2)$ and $I_1\neq I_2$, the following formulas hold true:
\begin{equation}\label{eq-thm-expression-global-stem-function}
\left\{\begin{aligned}
u_\Omega(z)=& f(z_{I_1})-I_1\left((I_1-I_2)^{-1} (f(z_{I_1})-f(z_{I_2}))\right), \\
v_\Omega(z)=&(I_1-I_2)^{-1} (f(z_{I_1})-f(z_{I_2})).
\end{aligned}\right.
\end{equation}
\end{theorem}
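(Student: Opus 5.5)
The plan is to reduce the statement directly to Lemma \ref{lemma-representation-formula}, using circular connectedness to place both units in a single component. Write $z=\alpha+\beta i$. We split into cases according to the sign of $\beta$.

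\emph{Case $\beta>0$.} Set $a=\alpha$ and $b=\beta$, so that $z_{I_k}=a+bI_k\in\Omega$ and hence $I_1,I_2\in\mathbb{S}(\Omega,a,b)$. Since $\Omega$ is circularly connected, $\mathbb{S}(\Omega,a,b)$ is connected, so it is itself the unique component $\Lambda$ containing both $I_1$ and $I_2$. Lemma \ref{lemma-representation-formula} then expresses the solution $(u,v)$ of \eqref{eq-slice-function} on $\Lambda$ by exactly the right-hand sides of \eqref{eq-thm-expression-global-stem-function}. This $(u,v)$ is the stem vector $(u_{z_{I_1}},v_{z_{I_1}})$. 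By Definition \ref{def-stem-function} with $z\in\Omega^{I_1}_+$, it equals $(u^{I_1}(z),v^{I_1}(z))$, and by Definition \ref{def-global-stem-function} (legitimate thanks to Theorem \ref{thm-complete-slice-on-circularly-connected-domain}) this is $(u_\Omega(z),v_\Omega(z))$.

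\emph{Case $\beta<0$.} Now $z_{I_k}=\alpha+|\beta|(-I_k)$, so $-I_1,-I_2$ are distinct points of the connected set $\mathbb{S}(\Omega,\alpha,|\beta|)$. The lemma applied with the units $-I_1,-I_2$ gives the stem vector
$$v'=(I_2-I_1)^{-1}(f(z_{I_1})-f(z_{I_2})),\qquad u'=f(z_{I_1})+I_1v'.$$
Definition \ref{def-stem-function} with $z\in\Omega^{I_1}_-$ gives $(u_\Omega(z),v_\Omega(z))=(u',-v')$. Since $(I_2-I_1)^{-1}=-(I_1-I_2)^{-1}$, we get $-v'=(I_1-I_2)^{-1}(f(z_{I_1})-f(z_{I_2}))$. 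Moreover $u'=f(z_{I_1})-I_1(-v')$, which matches \eqref{eq-thm-expression-global-stem-function}.

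\emph{Case $\beta=0$.} Here $z_{I_1}=z_{I_2}=\alpha\in\Omega\cap\mathbb{R}$. The right-hand sides reduce to $u=f(\alpha)$ and $v=0$, which is the stem vector convention for real points. This is consistent with $(u^{I}(z),v^{I}(z))=(f(\alpha),0)$.

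The main (mild) obstacle is bookkeeping: the sign convention in Definition \ref{def-stem-function} for the lower half-plane, and the choice of the positive radius $b=|\beta|$, must be handled consistently. Otherwise the proof is a direct substitution into Lemma \ref{lemma-representation-formula}.
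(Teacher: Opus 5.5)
Your proposal is correct and follows essentially the same route as the paper: a three-way case split on the sign of $\beta$, with each nonreal case reduced to Lemma \ref{lemma-representation-formula} (using the units $-I_1,-I_2$ and radius $|\beta|$ when $\beta<0$), and then converted to $(u_\Omega,v_\Omega)$ through the sign convention of Definition \ref{def-stem-function}. Two points in your write-up are spelled out more carefully than in the paper: you explicitly invoke circular connectedness to put both units in a single component, and you carry out the sign bookkeeping in the $\beta<0$ case in full.
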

\begin{proof}
We notice that the two stem vectors at $z_{I_k}$ $(k=1,2)$, written as $(u_{z_{I_k}},v_{z_{I_k}})$, are identical, and they can be expressed by means of $f(z_{I_k})$ as follows.

Case $1$: $z=\alpha+\beta i$ with $\beta=0$. Then we have $z_{I_1}=z_{I_2}=\alpha$, and by definition
\begin{equation}\label{eq-proof-thm-expression-global-stem-function-1}
\left\{\begin{aligned}
u_{z_{I_k}}=&f(\alpha),\\
v_{z_{I_k}}=&0.
\end{aligned}
\right.
\end{equation}

Case $2$: $z=\alpha+\beta i$ with $\beta>0$. Then Lemma \ref{lemma-representation-formula} with $a=\alpha$, $b=\beta$ yields
\begin{equation}\label{eq-proof-thm-expression-global-stem-function-2}
\left\{\begin{aligned}
u_{z_{I_k}}=&f(z_{I_1})-I_1\left((I_1-I_2)^{-1} (f(z_{I_1})-f(z_{I_2}))\right),\\
v_{z_{I_k}}=&(I_1-I_2)^{-1} (f(z_{I_1})-f(z_{I_2})).
\end{aligned}
\right.
\end{equation}

Case $3$: $z=\alpha+\beta i$ with $\beta>0$. Similarly, taking $a=\alpha$, $b=-\beta$ and replacing $I_k$ by $-I_k$ in Lemma \ref{lemma-representation-formula}, we get
\begin{equation}\label{eq-proof-thm-expression-global-stem-function-3}
\left\{\begin{aligned}
u_{z_{I_k}}=&f(z_{I_1})-I_1\left((I_1-I_2)^{-1} (f(z_{I_1})-f(z_{I_2}))\right),\\
v_{z_{I_k}}=&-(I_1-I_2)^{-1} (f(z_{I_1})-f(z_{I_2})).
\end{aligned}
\right.
\end{equation}

Substituting each of the equalities \eqref{eq-proof-thm-expression-global-stem-function-1}, \eqref{eq-proof-thm-expression-global-stem-function-2} and \eqref{eq-proof-thm-expression-global-stem-function-3} into Definition \ref{def-stem-function} in the corresponding case, we thus obtain an expression of the $I_k$-slice stem function $(u^{I_k},v^{I_k})$ as
\begin{equation*}
\left\{\begin{aligned}
u^{I_k}(z)=& f(z_{I_1})-I_1\left((I_1-I_2)^{-1} (f(z_{I_1})-f(z_{I_2}))\right), \\
v^{I_k}(z)=&(I_1-I_2)^{-1} (f(z_{I_1})-f(z_{I_2})).
\end{aligned}\right.
\end{equation*}
Therefore, \eqref{eq-thm-expression-global-stem-function} is valid due to Definition \ref{def-global-stem-function}.
\end{proof}
\begin{remark}\label{rem-thm-expression-global-stem-function}
If we replace the assumption of the above theorem with that $f:\Omega\rightarrow\mathbb{O}$ is a complete slice function, then \eqref{eq-thm-expression-global-stem-function} still holds. Moreover, it is obvious that $f$ vanishes everywhere on $\Omega$ iff $(u_\Omega,v_\Omega)$ vanishes everywhere on $\cup_{I\in\mathbb{S}}\Omega^I$ according to \eqref{eq-f-expression-global-stem-function} and \eqref{eq-thm-expression-global-stem-function}.
\end{remark}

As a consequence of Theorem \ref{thm-expression-global-stem-function}, we get an analogous result to the representation formula for slice functions (see, e.g., Proposition 6 in Sect. 3.3 of \cite{Ghiloni-2011}).
\begin{corollary}\label{cor-representation-formula}
Assume $f:\Omega\rightarrow\mathbb{O}$ is a slice function with $\Omega$ being a circularly connected domain. Then for any $z\in\mathbb{C}$ and $I_k\in\mathbb{S}$ $(k=1,2,3)$ satisfying $z_{I_k}\in\Omega$ $(k=1,2,3)$ and $I_1\neq I_2$, the following formula holds true:
\begin{equation*}
f(z_{I_3})=(I_3-I_2)\left((I_1-I_2)^{-1} f(z_{I_1})\right)-(I_3-I_1)\left((I_1-I_2)^{-1}f(z_{I_2})\right).
\end{equation*}
\end{corollary}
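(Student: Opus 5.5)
The plan is to derive the formula directly from Theorem \ref{thm-expression-global-stem-function}, using the identity \eqref{eq-f-expression-global-stem-function}. Since $\Omega$ is circularly connected, Theorem \ref{thm-complete-slice-on-circularly-connected-domain} shows that $f$ is a complete slice function. Hence
\[
f(z_{I_3})=u_\Omega(z)+I_3v_\Omega(z)
\]
for every $I_3$ with $z_{I_3}\in\Omega$. The proof then consists of substituting the expressions \eqref{eq-thm-expression-global-stem-function} for $u_\Omega(z)$ and $v_\Omega(z)$, which involve only $f(z_{I_1})$, $f(z_{I_2})$, $I_1$ and $I_2$, and simplifying.

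Write $w:=(I_1-I_2)^{-1}(f(z_{I_1})-f(z_{I_2}))$, so that $v_\Omega(z)=w$ and $u_\Omega(z)=f(z_{I_1})-I_1w$. Then
\[
f(z_{I_3})=f(z_{I_1})+(I_3-I_1)w,
\]
using only distributivity. Expanding $w$ by distributivity gives $w=(I_1-I_2)^{-1}f(z_{I_1})-(I_1-I_2)^{-1}f(z_{I_2})$, and so
\[
f(z_{I_3})=f(z_{I_1})+(I_3-I_1)\bigl((I_1-I_2)^{-1}f(z_{I_1})\bigr)-(I_3-I_1)\bigl((I_1-I_2)^{-1}f(z_{I_2})\bigr).
\]
The last term already has the required form. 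It remains to show
\[
f(z_{I_1})+(I_3-I_1)\bigl((I_1-I_2)^{-1}f(z_{I_1})\bigr)=(I_3-I_2)\bigl((I_1-I_2)^{-1}f(z_{I_1})\bigr).
\]

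For this, put $q=f(z_{I_1})$ and $p=I_1-I_2\neq0$. Since $I_3-I_2=(I_3-I_1)+p$, the right-hand side equals $(I_3-I_1)(p^{-1}q)+p(p^{-1}q)$. So the claim reduces to $p(p^{-1}q)=q$. This is the one step where non-associativity of $\mathbb{O}$ could cause trouble. It holds because the subalgebra generated by two elements is associative (Artin's theorem), and $p^{-1}=\overline{p}/|p|^2$ lies in the algebra generated by $p$. Hence $p(p^{-1}q)=(pp^{-1})q=q$.

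The only point needing care is the real case $\beta=0$, which I would handle separately. There $z_{I_k}=\alpha$ for all $k$, and the right-hand side becomes $\bigl((I_3-I_2)-(I_3-I_1)\bigr)\bigl(p^{-1}f(\alpha)\bigr)=p(p^{-1}f(\alpha))=f(\alpha)$ by the same Artin argument. The main obstacle throughout is tracking bracket placement, and only the product $p(p^{-1}q)$ ever needs reassociation.
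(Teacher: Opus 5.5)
Your proof is correct and follows the route the paper intends. The paper presents the corollary as a consequence of Theorem \ref{thm-expression-global-stem-function} and, like Proposition 6 of \cite{Ghiloni-2011}, substitutes $u_\Omega(z)$ and $v_\Omega(z)$ into $f(z_{I_3})=u_\Omega(z)+I_3v_\Omega(z)$; the only non-associative step there is your $p(p^{-1}q)=q$, which Artin's theorem gives. Your separate treatment of $\beta=0$ is harmless but unnecessary, because Theorem \ref{thm-expression-global-stem-function} already covers real $z$, where $w=0$.
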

The proof of this corollary is omitted, because it is essentially the same with that of Proposition 6 in
Sect. 3.3 of \cite{Ghiloni-2011}.

\begin{remark}\label{remark-expression-global-stem-function-local-version}
Now we turn back to the general case where the definition domain $\Omega$ is not necessarily circularly connected. As we know, every domain is a union of open balls and every open ball is circularly connected. Hence for an arbitrary slice function $f:\Omega\rightarrow\mathbb{O}$, a local version of \eqref{eq-thm-expression-global-stem-function} certainly holds true. More precisely, given any open ball $B\subset\Omega$, we have
\begin{equation}\label{eq-remark-expression-global-stem-function-local-version}
\left\{\begin{aligned}
u_B(z)=& f(z_{I_1})-I_1\left((I_1-I_2)^{-1} (f(z_{I_1})-f(z_{I_2}))\right), \\
v_B(z)=&(I_1-I_2)^{-1} (f(z_{I_1})-f(z_{I_2})),
\end{aligned}\right.
\end{equation}
for any $z\in\mathbb{C}$ and $I_k\in\mathbb{S}$ $(k=1,2)$ with $z_{I_k}\in B$ and $I_1\neq I_2$. In conclusion, the values of the stem function at all points can be recovered by means of the values of $f$. We would like to mention that a similar technique has been applied by G. Gentili and  C. Stoppato to establish a local version of the representation formula for quaternionic slice regular functions (see Theorem 3.4 in \cite{Gentili-2021}).
\end{remark}
\subsection{Fueter-regularity on quaternion slices}
In this part of our paper, we intend to verify the Fueter-regularity on quaternion slices of slice Fueter-regular functions. Then the real analyticity on quaternion slices follows as an immediate consequence. In order to describe the slice Fueter-regularity on axially symmetric domains in purely slice terms, a certain type of Bers-Vekua system has been thoroughly investigated by R. Ghiloni in \cite{Ghiloni-2021} based on many earlier contributions on related topics. To fit in with our notations, it is rewritten in the following form:
\begin{equation}\label{eq-Vekua}
  \left\{\begin{aligned}
             \frac{\partial u}{\partial \alpha}(z)-\frac{\partial v}{\partial \beta}(z)&=2\frac{v(z)}{\beta}  \quad\text{for }z\in O\setminus\mathbb{R},  \\
             \frac{\partial u}{\partial \beta}(z)+\frac{\partial v}{\partial \alpha}(z)&=0 \quad\text{for }z\in O,
         \end{aligned}
  \right.
\end{equation}
where $O$ is an open set in $\mathbb{C}$ and the functions $u,v:O\rightarrow\mathbb{O}$ are both $\mathcal{C}^1$. In line with the previous section, $\alpha$ and $\beta$ represent the real and  imaginary parts of the complex number $z$ respectively. Hereinafter, we call this very system the Bers-Vekua system on $O$ despite the fact that there are many other types of Bers-Vekua systems (see, e.g., Sect. 6.2 of \cite{Gurlebeck-2016}). As shown in Theorem 3 of \cite{Ghiloni-2021}, the stem function of every slice Fueter function on an arbitrary axially symmetric domain satisfies \eqref{eq-Vekua}. Not surprisingly, this property remains valid in a more general framework.
\begin{theorem}\label{thm-equiv-slice-Fueter-and-Vekua}
Assume $f:\Omega\rightarrow\mathbb{O}$ is a slice function of class $\mathcal{C}^1$. Then the following statements are equivalent.
\begin{enumerate}
\item[i.] $f$ is a slice Fueter-regular function, i.e., $\overline{\partial}_Ff\equiv 0$ on $\Omega\setminus\mathbb{R}$.
\item[ii.] For any open ball $B\subset\Omega$, the corresponding local stem function $(u_B,v_B)$ of $f$ satisfies the Bers-Vekua system, i.e., \eqref{eq-Vekua} with $O=\cup_{I\in\mathbb{S}}B^I$.
\end{enumerate}
\end{theorem}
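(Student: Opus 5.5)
The plan is to localize and then reuse the computation already carried out in the example of Subsection \ref{subsection-slice-Fueter-function}. Fix an open ball $B\subset\Omega$ and set $O=\cup_{I\in\mathbb{S}}B^I$. By Remark \ref{rem-exit-of-local-stem-function}, $(u_B,v_B)$ is an even-odd pair on $O$ and
\[
f(x)=u_B(z_x)+\frac{\Im(x)}{\lvert\Im(x)\rvert}v_B(z_x)\quad\text{for }x\in B\setminus\mathbb{R},
\]
where $z_x=\Re(x)+i\lvert\Im(x)\rvert$. Both conditions in the theorem are local. Condition i concerns $\overline{\partial}_Ff$ pointwise, and $B\setminus\mathbb{R}$ ranges over a cover of $\Omega\setminus\mathbb{R}$. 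So it suffices to show, for each ball $B$, that $\overline{\partial}_Ff\equiv0$ on $B\setminus\mathbb{R}$ if and only if $(u_B,v_B)$ satisfies \eqref{eq-Vekua} on $O$.

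\textbf{Regularity of the stem function.} I first check that $(u_B,v_B)$ is $\mathcal{C}^1$ on $O\setminus\mathbb{R}$. Fix a point of $O\setminus\mathbb{R}$ in the upper half plane, and choose fixed $I_1\neq I_2$ such that $z_{I_k}\in B$ for all $z$ near that point. Then \eqref{eq-remark-expression-global-stem-function-local-version} expresses $u_B,v_B$ as fixed octonionic-linear combinations of $f\circ\tau_{I_1}$ and $f\circ\tau_{I_2}$, which are $\mathcal{C}^1$. The lower half plane follows from the even-odd symmetry.

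\textbf{Computing $\overline{\partial}_Ff$.} Next I repeat the differentiation of the example with $u,v$ replaced by $u_B,v_B$. The chain rule gives the analogues of \eqref{eq-example-slice-Fueter-regular-local-1-D} and \eqref{eq-example-Ef}, and Lemma \ref{lem-Gamma-v-relation} gives $\Im(x)^{-1}\Gamma f(x)=6\lvert\Im(x)\rvert^{-1}v_B(z_x)$. Together these yield, for $x\in B\setminus\mathbb{R}$ with $I=\Im(x)/\lvert\Im(x)\rvert$ and $z=z_x$,
\[
\overline{\partial}_Ff(x)=\Bigl(\partial_\alpha u_B-\partial_\beta v_B-\tfrac{2}{\beta}v_B\Bigr)(z)+I\Bigl(\partial_\alpha v_B+\partial_\beta u_B\Bigr)(z).
\]
The implication ii$\Rightarrow$i is then immediate.

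\textbf{The converse i$\Rightarrow$ii.} Assume $\overline{\partial}_Ff=0$ on $B\setminus\mathbb{R}$. Fix $z\in O$ with $\beta>0$; the set of $I$ with $z_I\in B$ is open in $\mathbb{S}$. Setting $A$ and $C$ for the two bracketed expressions above, we get $A+IC=0$ for infinitely many $I$. Since $A$ and $C$ are independent of $I$, subtracting two instances gives $(I_1-I_2)C=0$, hence $C=0$ and then $A=0$ (this is the uniqueness argument behind Lemma \ref{lemma-representation-formula}). For $\beta<0$ the same conclusion follows by the even-odd symmetry of $(u_B,v_B)$, since both equations in \eqref{eq-Vekua} are preserved under $z\mapsto\overline{z}$ for an even-odd pair. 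This proves both equations on $O\setminus\mathbb{R}$.

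\textbf{Main obstacle: the real axis.} The expected difficulty is the second equation of \eqref{eq-Vekua} at real points of $O$, together with the required $\mathcal{C}^1$ regularity of $(u_B,v_B)$ up to the real axis. A first idea is to note that $u_B(\alpha)=f(\alpha)$ and that $u_B(\alpha+\beta i)$ is the average of $f$ at $\alpha+\beta I$ and $\alpha-\beta I$, which is $\mathcal{C}^1$ in $(\alpha,\beta)$ across $\beta=0$. The odd function $v_B(\alpha+\beta i)$ equals half the difference $f(\alpha+\beta I)-f(\alpha-\beta I)$ multiplied on the left by $I^{-1}$, giving regularity there too. However, this route may only give $\mathcal{C}^0$ control of $\partial_\beta v_B$. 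If it falls short, I will instead pass to the limit $\beta\to0$ in the identity $\partial_\beta u_B+\partial_\alpha v_B=0$ established off the axis, using continuity of these derivatives. That continuity follows from their expression in terms of the first derivatives of $f$ on a fixed complex slice $\mathbb{C}_I$ meeting the axis: $\partial_\beta u_B$ and $\partial_\alpha v_B$ are the even part of $\partial_\beta(f\circ\tau_I)$ and the odd part of $\partial_\alpha(f\circ\tau_I)$, times $I^{-1}$.
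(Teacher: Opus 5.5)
Your proposal is correct and follows the paper's proof step for step: localize to a ball, get $\mathcal{C}^1$ regularity of $(u_B,v_B)$ from \eqref{eq-remark-expression-global-stem-function-local-version}, write $\overline{\partial}_Ff(z_I)=A(z)+IC(z)$ via the chain rule and Lemma~\ref{lem-Gamma-v-relation}, and use that this holds for infinitely many $I$. The paper gets regularity on all of $O$ at once, real points included, by taking $I_1\neq I_2$ near any $I'$ with $z'_{I'}\in B$, so your hedge at the real axis is unnecessary: your ``first idea'' is that same formula with $I_2=-I_1$, which is $\mathcal{C}^1$ across $\beta=0$; your continuity extension of $\partial_\beta u_B+\partial_\alpha v_B=0$ to real points supplies a step the paper leaves implicit, and note that $\partial_\beta u_B$ is the odd part of $\partial_\beta(f\circ\tau_I)$, not the even part (the even part is $I\,\partial_\beta v_B$).
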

\begin{proof}
It suffices to prove that $\overline{\partial}_Ff\equiv 0$ on $B\setminus\mathbb{R}$ iff $(u_B,v_B)$ satisfies \eqref{eq-Vekua} with $O=\cup_{I\in\mathbb{S}}B^I$.
For convenience, we assign a complex number $$z_x:=\Re(x)+i\lvert \Im(x)\rvert$$ to each $x\in B\setminus\mathbb{R}$.
Then replacing $z, I$ with $z_x, \frac{\Im(x)}{\lvert \Im(x)\rvert}$  in \eqref{eq-local-expression-of-f-by-stem-function}, we obtain
\begin{equation}\label{eq-local-expression-of-f-by-stem-function-2}
f(x)=u_B(z_x)+\frac{\Im(x)}{\lvert \Im(x)\rvert}v_B(z_x).
\end{equation}

Firstly, we claim that $(u_B,v_B)$ is $\mathcal{C}^1$ on $\cup_{I\in\mathbb{S}}B^I$ (w.r.t. $\alpha$ and $\beta$).
Indeed, for any given $z'\in\mathbb{C}$ and $I'\in\mathbb{S}$ with $z'_{I'}\in B$, there exists $\delta>0$ such that $z_I\in B$ holds for $z\in\mathbb{C}$ and $I\in\mathbb{S}$ with $\lvert z-z'\rvert<\delta$ and $\lvert I-I'\rvert<\delta$ due to the openness of $B$; then by \eqref{eq-remark-expression-global-stem-function-local-version} with $I_k$ $(k=1,2)$ satisfying $\lvert I_k-I'\rvert<\delta$ and $I_1\neq I_2$, we get
\begin{equation*}
\left\{\begin{aligned}
u_B(z)=& f(z_{I_1})-I_1\left((I_1-I_2)^{-1} (f(z_{I_1})-f(z_{I_2}))\right), \\
v_B(z)=&(I_1-I_2)^{-1} (f(z_{I_1})-f(z_{I_2})),
\end{aligned}\right. \quad\text{ when }\lvert z-z'\rvert<\delta,
\end{equation*}
which indicates that $(u_B,v_B)$ is $\mathcal{C}^1$ in a neighborhood of $z'$ under the given assumption of this theorem. Hence, making $z'$ run over the entire set $\cup_{I\in\mathbb{S}}B^I$, we see that $(u_B,v_B)$ is $\mathcal{C}^1$ on $\cup_{I\in\mathbb{S}}B^I$.

Secondly, Applying the first-order differential operators $\frac{\partial}{\partial x_0}$ and $E$ to the both sides of \eqref{eq-local-expression-of-f-by-stem-function-2} leads to

\begin{equation*}
 \frac{\partial f}{\partial x_0}(x)=\frac{\partial u_B}{\partial \alpha}(z_x)+\frac{\Im(x)}{\lvert \Im(x)\rvert}\frac{\partial v_B}{\partial \alpha}(z_x),
\end{equation*}
\begin{equation*}
   Ef(x)=\lvert \Im(x)\rvert\frac{\partial u_B}{\partial \beta}(z_x)+\Im(x)\frac{\partial v_B}{\partial \beta}(z_x).
\end{equation*}
Additionally, Lemma \ref{lem-Gamma-v-relation} yields
$$
\Gamma f(x)=6\frac{\Im(x)}{\lvert \Im(x)\rvert}v_x=6\frac{\Im(x)}{\lvert \Im(x)\rvert}v_B(z_x).
$$
Combing the three equalities above, we thus have
\begin{equation*}
\begin{split}
  \overline{\partial}_Ff(x)=&\frac{\partial f}{\partial x_0}(x)-\Im(x)^{-1}Ef(x)-\frac{1}{3}\Im(x)^{-1}\Gamma f(x) \\
  =&\frac{\partial u_B}{\partial \alpha}(z_x)-\frac{\partial v_B}{\partial \beta}(z_x)-\frac{2}{\lvert \Im(x)\rvert}v_B(z_x)
    \\
     & +\frac{\Im(x)}{\lvert \Im(x)\rvert}\left(\frac{\partial u_B}{\partial \beta}(z_x)+\frac{\partial v_B}{\partial \alpha}(z_x)\right).
\end{split}
\end{equation*}

In conclusion, the given assumption ensure that $(u_B,v_B)$ is $\mathcal{C}^1$, and
\begin{equation*}
\begin{split}
  \overline{\partial}_Ff(x)=&\frac{\partial u_B}{\partial \alpha}(z_x)-\frac{\partial v_B}{\partial \beta}(z_x)-\frac{2}{\lvert \Im(x)\rvert}v_B(z_x)
    \\
     & +\frac{\Im(x)}{\lvert \Im(x)\rvert}\left(\frac{\partial u_B}{\partial \beta}(z_x)+\frac{\partial v_B}{\partial \alpha}(z_x)\right)
\end{split}\quad\text{for }x\in B\setminus\mathbb{R},
\end{equation*}
or equivalently,
\begin{equation}\label{eq-proof-thm-slice-Fueter-and-Vekua-1}
  \overline{\partial}_Ff(z_I)=\frac{\partial u_B}{\partial \alpha}(z)-\frac{\partial v_B}{\partial \beta}(z)-\frac{2}{\beta}v_B(z)+I\left(\frac{\partial u_B}{\partial \beta}(z)+\frac{\partial v_B}{\partial \alpha}(z)\right)
\end{equation}
holds for any $z\in\mathbb{C}\setminus\mathbb{R}$ and $I\in \mathbb{S}$ with $z_I\in B\setminus\mathbb{R}$ due to the underlying fact that $u_B$ is even and $v_B$ is odd w.r.t. the variable $\beta$ as mentioned in Remark \ref{rem-exit-of-local-stem-function}. Thanks to \eqref{eq-proof-thm-slice-Fueter-and-Vekua-1}, one can easily see that $\overline{\partial}_Ff:B\setminus\mathbb{R}\rightarrow\mathbb{O}$ is a complete slice function, and its global stem function has the following expression:
\begin{equation*}
\left(\frac{\partial u_B}{\partial \alpha}(z)-\frac{\partial v_B}{\partial \beta}(z)-\frac{2}{\beta}v_B(z), \frac{\partial u_B}{\partial \beta}(z)+\frac{\partial v_B}{\partial \alpha}(z)\right)\quad\text{for }z\in\cup_{I\in\mathbb{S}}B^I\setminus\mathbb{R}.
\end{equation*}
Furthermore, as mentioned in Remark \ref{rem-thm-expression-global-stem-function}, we know that a complete slice function vanishes everywhere if and only if its global stem function vanished everywhere, which leads to the conclusion that
$\overline{\partial}_Ff\equiv 0$ on $B\setminus\mathbb{R}$ iff $(u_B,v_B)$ satisfies \eqref{eq-Vekua} with $O=\cup_{I\in\mathbb{S}}B^I$. Now the proof is completed.
\end{proof}

Recall that the symbol $\mathcal{N}$ represents the set of ordered pairs comprising two orthogonal imaginary units, and $\mathbb{H}_\mathbb{I}$ the quaternion sub-algebra of $\mathbb{O}$ generated by any given pair $\mathbb{I}=(I,J)\in\mathcal{N}$. As is well known, every quaternion $q$ in $\mathbb{H}_\mathbb{I}$ can be uniquely expressed as $q_0+q_1I+q_2J+q_3IJ$ with $q_k\in\mathbb{R}$ $(k=0,1,2,3)$. And the associated (left) Cauchy-Fueter operator (see, e.g., p140 of \cite{Colombo-2004}) is given by
$$
\frac{\partial}{\partial q_0}+I\frac{\partial}{\partial q_1}+J\frac{\partial}{\partial q_2}+IJ\frac{\partial}{\partial q_3},
$$
which is also called the slice Dirac operator by M. Jin, G. Ren and I. Sabadini in view of the slice structure of $\mathbb{O}$ mentioned in Proposition 3.2 of \cite{Jin-2020}. Out of respect for the previous contribution, we denote this operator by the same symbol $D_{\mathbb{I}}$ as in \cite{Jin-2020}. In addition, given any domain $\Omega\subset\mathbb{O}$ and $\mathbb{I}\in\mathcal{N}$, we denote $\Omega\cap\mathbb{H}_\mathbb{I}$ by $\Omega_\mathbb{I}$; and given any function $f:\Omega\rightarrow\mathbb{O}$, we denote the restriction of $f$ on $\Omega_\mathbb{I}$ by $f_\mathbb{I}$.

Before proceeding further, we would like to emphasize that throughout this paper, $x_0,\cdots,x_7$ always represent the real components of the octonion variable $x$, $q_0,\cdots,q_3$ always represent those of the quaternion variable $q$, and $\alpha, \beta$ always represent those of the complex variable $z$. We think the notations should be summarized here, because we may need to handle different types of variables at the same time in the later part of this paper.

To verify the Fueter-regularity on quaternion slices, we need the following lemma.
\begin{lem}\label{lemma-Vekua-Fueter-quaternion-slices}
Assume $f:B\rightarrow\mathbb{O}$ is a slice function of class $\mathcal{C}^1$, where $B$ is an open ball in $\mathbb{O}$.
Then the following statements are equivalent.
\begin{enumerate}
\item[i.] Its global stem function $(u_B,v_B)$ satisfies \eqref{eq-Vekua} with $O=\cup_{I\in\mathbb{S}}B^I$.
\item[ii.] $D_{\mathbb{I}}f_\mathbb{I}\equiv 0$ on $B_\mathbb{I}$ for any $\mathbb{I}\in\mathcal{N}$.
\end{enumerate}
\end{lem}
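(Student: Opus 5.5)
The plan is to compute $D_{\mathbb{I}}f_\mathbb{I}$ explicitly in terms of the stem function, mirroring the computation of $\overline{\partial}_Ff$ in the proof of Theorem \ref{thm-equiv-slice-Fueter-and-Vekua}. Fix $\mathbb{I}=(I,J)\in\mathcal{N}$ and write $q=q_0+\vec q$ with $\vec q=q_1I+q_2J+q_3IJ$. For $q\in B_\mathbb{I}\setminus\mathbb{R}$ set $z_q=q_0+i\lvert\vec q\rvert$. By \eqref{eq-local-expression-of-f-by-stem-function} we have
$$f_\mathbb{I}(q)=u_B(z_q)+\frac{\vec q}{\lvert\vec q\rvert}v_B(z_q),$$
and the first step of the proof of Theorem \ref{thm-equiv-slice-Fueter-and-Vekua} shows that $(u_B,v_B)$ is $\mathcal{C}^1$.

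Write $K_1=I$, $K_2=J$, $K_3=IJ$. The next step is to differentiate, exactly as in \eqref{eq-example-slice-Fueter-regular-local-1-D}:
$$\partial_{q_k}f_\mathbb{I}=\frac{q_k}{\lvert\vec q\rvert}\Bigl(\partial_\beta u_B+\frac{\vec q}{\lvert\vec q\rvert}\partial_\beta v_B\Bigr)+\frac{K_k\lvert\vec q\rvert^2-q_k\vec q}{\lvert\vec q\rvert^3}v_B.$$
We then apply $\sum_k K_k(\cdot)$. The key algebraic point is that $K_k$, $\vec q$ and an arbitrary octonion $w$ satisfy $K_k(\vec q w)=(K_k\vec q)w$ and $K_k(K_kw)=-w$. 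This follows from Artin's theorem, since $K_k$ and $\vec q$ lie in the associative subalgebra $\mathbb{H}_\mathbb{I}$ and the associator is alternating, so the alternative laws apply. Using $\sum_k q_kK_k=\vec q$ and $\vec q\vec q=-\lvert\vec q\rvert^2$, the expected outcome is
$$D_\mathbb{I}f_\mathbb{I}(q)=\partial_\alpha u_B-\partial_\beta v_B-\frac{2}{\lvert\vec q\rvert}v_B+\frac{\vec q}{\lvert\vec q\rvert}\bigl(\partial_\beta u_B+\partial_\alpha v_B\bigr),$$
evaluated at $z_q$. Here the quaternion dimension $3$ produces the coefficient $3-1=2$, matching $\frac{1}{3}\cdot 6=2$ in \eqref{eq-Vekua}. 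This computation is the main obstacle. One must carefully justify each reassociation $K_k(\vec q\,w)$ with $w\in\mathbb{O}$, which relies on $K_k,\vec q\in\mathbb{H}_\mathbb{I}$ generating an associative subalgebra together with any single further element. Tracking the signs from $K_k^2=-1$ is routine.

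The two directions then follow. (i)$\Rightarrow$(ii): the displayed formula vanishes on $B_\mathbb{I}\setminus\mathbb{R}$, and continuity of $D_\mathbb{I}f_\mathbb{I}$ extends this to $B_\mathbb{I}$, since $\mathbb{R}$ has empty interior in $\mathbb{H}_\mathbb{I}$. (ii)$\Rightarrow$(i): for $z=\alpha+i\beta$ with $\beta>0$ and $z_{I'}\in B$, choose $\mathbb{I}\ni I'$ and evaluate at $z_{\pm I'}$ if possible, or more generally at two distinct units $I_1,I_2$ near $I'$ in a common quaternion slice with $z_{I_k}\in B$. We obtain $A+I_kC=0$ for $k=1,2$, where $A,C$ are the two bracketed stem combinations at $z$, so $(I_1-I_2)C=0$ and hence $A=C=0$. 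For $\beta<0$ we use the even-odd symmetry of Remark \ref{rem-exit-of-local-stem-function}. The second equation of \eqref{eq-Vekua} on $\mathbb{R}\cap O$ follows by continuity, as in Remark \ref{rem-thm-expression-global-stem-function}.
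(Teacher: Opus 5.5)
Your plan is the paper's. You write $f_\mathbb{I}(q)=u_B(z_q)+\frac{\vec q}{\lvert\vec q\rvert}v_B(z_q)$, differentiate, and arrive at the same displayed formula for $D_\mathbb{I}f_\mathbb{I}$. You then prove (i)$\Rightarrow$(ii) by continuity, and (ii)$\Rightarrow$(i) by evaluating at two distinct units and using the even--odd symmetry. Your direct subtraction $(I_1-I_2)C=0$ is exactly what the paper obtains by citing its remark on complete slice functions. Your continuity argument for the second equation of \eqref{eq-Vekua} on $O\cap\mathbb{R}$ makes explicit a point the paper leaves implicit.

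The flaw is in the step you single out as the main obstacle. The identity $K_k(\vec q\,w)=(K_k\vec q)w$ is false for arbitrary $w\in\mathbb{O}$, and so is your justification for it. Artin's theorem concerns subalgebras generated by \emph{two} elements, whereas $\mathbb{H}_\mathbb{I}$ together with any $w\notin\mathbb{H}_\mathbb{I}$ generates all of $\mathbb{O}$. Concretely, with the paper's multiplication table, take $\mathbb{I}=(e_1,e_2)$, $K_1=e_1$, $\vec q$ a nonzero multiple of $e_2$, and $w=e_4$. Then $e_1(e_2e_4)=e_1e_6=-e_7$, while $(e_1e_2)e_4=e_3e_4=e_7$.

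The repair is to sum over $k$ before reassociating. By real linearity, $\sum_k q_kK_k(\vec q\,w)=\vec q(\vec q\,w)$. The left alternative law, applied only to the pair $\vec q,w$, then gives $\vec q(\vec q\,w)=(\vec q\vec q)w=-\lvert\vec q\rvert^2w$. Your other identity, $K_k(K_kw)=(K_kK_k)w=-w$, is correct, since it is left alternativity. With these, the $\partial_\beta v_B$ terms contribute $-\partial_\beta v_B$. The undifferentiated $v_B$ terms contribute $-\frac{3}{\lvert\vec q\rvert}v_B+\frac{1}{\lvert\vec q\rvert}v_B=-\frac{2}{\lvert\vec q\rvert}v_B$. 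So your displayed formula is correct.

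Your termwise identity reaches the right total only by accident. After summation it replaces $\vec q(\vec q\,w)$ by $(\vec q\vec q)w$, and these two agree by alternativity. As written, though, the central computation rests on a false statement. Replace it with the summed argument; the rest of your proof then goes through.
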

\begin{proof}
Just like deriving \eqref{eq-local-expression-of-f-by-stem-function-2}, we assign a complex number $$z_q:=\Re(q)+i\lvert \Im(q)\rvert$$ to each $q\in B_\mathbb{I}\setminus\mathbb{R}$, where $\Re(q)=q_0$ and $\Im(q)=q_1I+q_2J+q_3IJ$; then
\eqref{eq-local-expression-of-f-by-stem-function} with $z=z_q$ and $I=\frac{\Im(q)}{\lvert \Im(q)\rvert}$ yields
\begin{equation}\label{eq-local-expression-of-f-by-stem-function-3}
f_\mathbb{I}(q)=u_B(z_q)+\frac{\Im(q)}{\lvert \Im(q)\rvert}v_B(z_q).
\end{equation}
Under the given assumption of this lemma, $f_\mathbb{I}$ is certainly $\mathcal{C}^1$ on $B_\mathbb{I}$ (w.r.t. the real variables $q_0,\cdots,q_3$) for any $\mathbb{I}\in\mathcal{N}$.
Moreover, $u_B$ and $v_B$ are both $\mathcal{C}^1$ on $\cup_{I\in\mathbb{S}}B^I$ (w.r.t. the real variables $\alpha$ and $\beta$) as mentioned in the proof of Theorem \ref{thm-equiv-slice-Fueter-and-Vekua}.
Differentiating the both sides of \eqref{eq-local-expression-of-f-by-stem-function-3}, we obtain
\begin{equation}\label{eq-proof-lemma-Vekua-Fueter-quaternion-slices-1}
\frac{\partial f_\mathbb{I}}{\partial q_0}(q)=\frac{\partial u_B}{\partial \alpha}(z_q)+\frac{\Im(q)}{\lvert \Im(q)\rvert}\frac{\partial v_B}{\partial \alpha}(z_q)
\end{equation}
and
\begin{equation}\label{eq-proof-lemma-Vekua-Fueter-quaternion-slices-2}
\begin{split}
   \frac{\partial f_\mathbb{I}}{\partial q_k}(q)= & \frac{q_k}{\lvert \Im(q)\rvert}\frac{\partial u_B}{\partial \beta}(z_q)+\frac{q_k\Im(q)}{\lvert \Im(q)\rvert^2}\frac{\partial v_B}{\partial \beta}(z_q) \\
     & +\frac{\epsilon_k\lvert \Im(q)\rvert^2-q_k\Im(q)}{\lvert \Im(q)\rvert^3} v_B(z_q)
\end{split} \quad k=1,2,3,
\end{equation}
where $\epsilon_1=I$, $\epsilon_2=J$ and $\epsilon_3=IJ$.
A proper linear combination of \eqref{eq-proof-lemma-Vekua-Fueter-quaternion-slices-1} and \eqref{eq-proof-lemma-Vekua-Fueter-quaternion-slices-2} leads to
\begin{equation}\label{eq-proof-lemma-Vekua-Fueter-quaternion-slices-3}
\begin{split}
   D_{\mathbb{I}}f_\mathbb{I}(q)=& \frac{\partial u_B}{\partial \alpha}(z_q)-\frac{\partial v_B}{\partial \beta}(z_q)-\frac{2}{\lvert \Im(q)\rvert}v_B(z_q)  \\
     & +\frac{\Im(q)}{\lvert \Im(q)\rvert}\left(\frac{\partial u_B}{\partial \beta}(z_q)+\frac{\partial v_B}{\partial \alpha}(z_q)\right)
\end{split} \quad\text{for }q\in B_\mathbb{I}\setminus\mathbb{R}.
\end{equation}
Now we claim that \eqref{eq-proof-lemma-Vekua-Fueter-quaternion-slices-3} can be transformed into the the following conclusion: for any $z\in \cup_{I\in\mathbb{S}}B^I \setminus\mathbb{R}$, there exits a pair $\mathbb{I}=(I,J)\in\mathcal{N}$ such that \begin{equation}\label{eq-proof-lemma-Vekua-Fueter-quaternion-slices-4}
D_{\mathbb{I}}f_\mathbb{I}(z_I)=\frac{\partial u_B}{\partial \alpha}(z)-\frac{\partial v_B}{\partial \beta}(z)-\frac{2}{\beta}v_B(z)+I\left(\frac{\partial u_B}{\partial \beta}(z)+\frac{\partial v_B}{\partial \alpha}(z)\right).
\end{equation}
This new equality is apparently an analogous result to \eqref{eq-proof-thm-slice-Fueter-and-Vekua-1}. Because of the dependence w.r.t. $\mathbb{I}$, it requires more subtle manipulation to verify \eqref{eq-proof-lemma-Vekua-Fueter-quaternion-slices-4}. The details are as follows:

Given any $z\in \cup_{I\in\mathbb{S}}B^I \setminus\mathbb{R}$, there is obviously an imaginary unit $I\in\mathbb{S}$ so that $z\in B^I\setminus\mathbb{R}$.
%Moreover, thanks to the openness of $B$, there exits a positive constant $\delta$ such that for any $I'\in\mathbb{S}$ with $\lvert I'-I\lvert<\delta$,
%$z_{I'}\in B$.
Coupling this $I$ with another imaginary unit $J$ satisfying $J\perp I$ gives rise to a pair $\mathbb{I}=(I,J)\in\mathcal{N}$.
It is easy to see $z_I\in B_\mathbb{I}\setminus\mathbb{R}$, which allows
us to take $q=z_I$ on the left side of \eqref{eq-proof-lemma-Vekua-Fueter-quaternion-slices-3}, then the values of $z_q$ and $\Im(q)$ on the right side of \eqref{eq-proof-lemma-Vekua-Fueter-quaternion-slices-3} are correspondingly determined as
\begin{equation*}
  z_q=\alpha+\lvert\beta\lvert i,\quad \Im(q)=\beta I,
\end{equation*}
where $\alpha$ and $\beta$ are the real and imaginary parts of $z$ respectively. Thus we obtain
\begin{equation*}
  D_{\mathbb{I}}f_\mathbb{I}(z_I)=\left\{
  \begin{aligned}
    \frac{\partial u_B}{\partial \alpha}(z)-\frac{\partial v_B}{\partial \beta}(z)-\frac{2}{\beta}v_B(z)+I\left(\frac{\partial u_B}{\partial \beta}(z)+\frac{\partial v_B}{\partial \alpha}(z)\right),& \quad\text{if }\beta>0, \\
    \frac{\partial u_B}{\partial \alpha}(\overline{z})-\frac{\partial v_B}{\partial \beta}(\overline{z})-\frac{2}{\beta}v_B(\overline{z})-I\left(\frac{\partial u_B}{\partial \beta}(\overline{z})+\frac{\partial v_B}{\partial \alpha}(\overline{z})\right), & \quad\text{if }\beta<0.
  \end{aligned}
  \right.
\end{equation*}
In addition, Remark \ref{rem-exist-of-global-stem-function} says that $u_B$ is even and $v_B$ is odd  w.r.t. $\beta$, indicating $\frac{\partial u_B}{\partial \alpha}-\frac{\partial v_B}{\partial \beta}-\frac{2v_B}{\beta}$ is even and $\frac{\partial u_B}{\partial \beta}+\frac{\partial v_B}{\partial \alpha}$ is odd w.r.t. $\beta$.
Consequently, the above equality yields \eqref{eq-proof-lemma-Vekua-Fueter-quaternion-slices-4}.

It follows immediately from \eqref{eq-proof-lemma-Vekua-Fueter-quaternion-slices-3} that if Statement $i$ is assumed to be valid, then $D_{\mathbb{I}}f_\mathbb{I}\equiv 0$ on $B_\mathbb{I}\setminus\mathbb{R}$, indicating $D_{\mathbb{I}}f_\mathbb{I}\equiv 0$ on $B_\mathbb{I}$ due to the fact that $D_{\mathbb{I}}f_\mathbb{I}$ is continuous on $B_\mathbb{I}$. Hence, Statement $i$ implies Statement $ii$. Conversely, we may assume Statement $ii$ is valid, then \eqref{eq-proof-lemma-Vekua-Fueter-quaternion-slices-4} indicates that the complete slice function on $B\setminus\mathbb{R}$ generated by the stem function with the following expression
\begin{equation*}
\left(\frac{\partial u_B}{\partial \alpha}(z)-\frac{\partial v_B}{\partial \beta}(z)-\frac{2}{\beta}v_B(z), \frac{\partial u_B}{\partial \beta}(z)+\frac{\partial v_B}{\partial \alpha}(z)\right)\quad\text{for }z\in\cup_{I\in\mathbb{S}}B^I\setminus\mathbb{R}.
\end{equation*}
vanishes everywhere on $B\setminus\mathbb{R}$. According to Remark \ref{rem-thm-expression-global-stem-function}, we thus see the stem function
\begin{equation*}
\left(\frac{\partial u_B}{\partial \alpha}-\frac{\partial v_B}{\partial \beta}-\frac{2v_B}{\beta}, \frac{\partial u_B}{\partial \beta}+\frac{\partial v_B}{\partial \alpha}\right)
\end{equation*}
vanishes everywhere on $\cup_{I\in\mathbb{S}}B^I\setminus\mathbb{R}$, which means $(u_B,v_B)$ satisfies \eqref{eq-Vekua} with $O=\cup_{I\in\mathbb{S}}B^I$. Hence, Statement $ii$ implies Statement $i$. In conclusion, under the given assumption of this lemma, Statements $i$ and $ii$ are equivalent.
\end{proof}

We are now in a position to prove the Fueter-regularity on quaternion slices of slice Fueter-regular functions.
\begin{theorem}\label{thm-Fueter-regularity-on-quaternion-slices}
Assume $f:\Omega\rightarrow\mathbb{O}$ is a slice function of class $\mathcal{C}^1$. Then the following statements are equivalent.
\begin{enumerate}
\item[i.] $f$ is a slice Fueter-regular function, i.e., $\overline{\partial}_Ff\equiv 0$ on $\Omega\setminus\mathbb{R}$.
\item[ii.] For any $\mathbb{I}\in\mathcal{N}$, $f_\mathbb{I}$ is $($left$)$ Fueter-regular, i.e., $D_{\mathbb{I}}f_\mathbb{I}\equiv 0$ on $\Omega_\mathbb{I}$.
\end{enumerate}
\end{theorem}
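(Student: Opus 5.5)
The plan is to derive the theorem by chaining Theorem~\ref{thm-equiv-slice-Fueter-and-Vekua} with Lemma~\ref{lemma-Vekua-Fueter-quaternion-slices}, using open balls as the bridge between the global statement on $\Omega$ and the local statements on balls. The key observation is that both conditions are local. Condition i is a pointwise identity on $\Omega\setminus\mathbb{R}$. For condition ii, note that $\Omega_\mathbb{I}$ is covered by the sets $B_\mathbb{I}$ with $B\subset\Omega$ an open ball. Moreover, $D_\mathbb{I}f_\mathbb{I}$ computed on $\Omega_\mathbb{I}$ agrees with $D_\mathbb{I}(f|_B)_\mathbb{I}$ on $B_\mathbb{I}$, since $B_\mathbb{I}=B\cap\mathbb{H}_\mathbb{I}$ is open in $\mathbb{H}_\mathbb{I}$.

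For i $\Rightarrow$ ii, I would fix $\mathbb{I}\in\mathcal{N}$ and a point $q\in\Omega_\mathbb{I}$, and choose an open ball $B\subset\Omega$ containing $q$. By Theorem~\ref{thm-equiv-slice-Fueter-and-Vekua}, the local stem function $(u_B,v_B)$ satisfies \eqref{eq-Vekua} on $\cup_{I\in\mathbb{S}}B^I$. Since $f|_B$ is a $\mathcal{C}^1$ slice function on the ball $B$, whose global stem function is exactly $(u_B,v_B)$ by Definition~\ref{def-local-stem-function}, Lemma~\ref{lemma-Vekua-Fueter-quaternion-slices} (i $\Rightarrow$ ii) gives $D_\mathbb{I}(f|_B)_\mathbb{I}\equiv0$ on $B_\mathbb{I}$. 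In particular $D_\mathbb{I}f_\mathbb{I}(q)=0$, and since $q$ was arbitrary, $D_\mathbb{I}f_\mathbb{I}\equiv0$ on $\Omega_\mathbb{I}$.

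For ii $\Rightarrow$ i, I would take an arbitrary open ball $B\subset\Omega$. For every $\mathbb{I}$, the hypothesis restricted to $B_\mathbb{I}\subset\Omega_\mathbb{I}$ gives $D_\mathbb{I}(f|_B)_\mathbb{I}\equiv0$ on $B_\mathbb{I}$. Lemma~\ref{lemma-Vekua-Fueter-quaternion-slices} (ii $\Rightarrow$ i) then shows that $(u_B,v_B)$ satisfies the Bers-Vekua system on $\cup_{I\in\mathbb{S}}B^I$. As $B$ is arbitrary, Theorem~\ref{thm-equiv-slice-Fueter-and-Vekua} (ii $\Rightarrow$ i) yields $\overline{\partial}_Ff\equiv0$ on $\Omega\setminus\mathbb{R}$.

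There is no real obstacle, since the analytic content is already in the two cited results. The only step needing care is the bookkeeping: checking that the global stem function of $f|_B$ used in the lemma coincides with the local stem function $(u_B,v_B)$ appearing in Theorem~\ref{thm-equiv-slice-Fueter-and-Vekua}, which holds by definition, and that restricting to balls loses no points of $\Omega_\mathbb{I}$. The case $B_\mathbb{I}=\emptyset$ is vacuous and needs no separate treatment.
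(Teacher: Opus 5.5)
Your proposal is correct and follows essentially the same route as the paper. Both reduce to open balls $B\subset\Omega$, using that the sets $B_\mathbb{I}$ cover $\Omega_\mathbb{I}$, and then chain the ball-wise equivalence from Theorem~\ref{thm-equiv-slice-Fueter-and-Vekua} with Lemma~\ref{lemma-Vekua-Fueter-quaternion-slices}. Your extra bookkeeping, that $f|_B$ is a $\mathcal{C}^1$ slice function whose global stem function is $(u_B,v_B)$, is exactly what the paper leaves implicit.
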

\begin{proof}
Obviously, for any given $\mathbb{I}\in\mathcal{N}$, the union of $B_\mathbb{I}$ is exactly $\Omega_\mathbb{I}$ as $B$ traverse all open balls in $\Omega$. Thus it suffices to prove that $\overline{\partial}_Ff\equiv 0$ on $B\setminus\mathbb{R}$ iff $D_{\mathbb{I}}f_\mathbb{I}\equiv 0$ on $B_\mathbb{I}$ for all $\mathbb{I}\in\mathcal{N}$.
In the proof of Theorem \ref{thm-equiv-slice-Fueter-and-Vekua}, we have showed that $\overline{\partial}_Ff\equiv 0$ on $B\setminus\mathbb{R}$ iff $(u_B,v_B)$ satisfies \eqref{eq-Vekua} with $O=\cup_{I\in\mathbb{S}}B^I$.
Furthermore, Lemma \ref{lemma-Vekua-Fueter-quaternion-slices} says that
$(u_B,v_B)$ satisfies \eqref{eq-Vekua} with $O=\cup_{I\in\mathbb{S}}B^I$ iff $D_{\mathbb{I}}f_\mathbb{I}\equiv 0$ on $B_\mathbb{I}$ for all $\mathbb{I}\in\mathcal{N}$.
Therefore, $\overline{\partial}_Ff\equiv 0$ on $B\setminus\mathbb{R}$ iff $D_{\mathbb{I}}f_\mathbb{I}\equiv 0$ on $B_\mathbb{I}$ for all $\mathbb{I}\in\mathcal{N}$. This completes the proof.
\end{proof}

It follows immediately from Theorem \ref{thm-Fueter-regularity-on-quaternion-slices} that slice Fueter-regular functions are real analytic on every quaternion slice.
\begin{corollary}\label{cor-real-analytic-slice-Fueter-function-on-quaternion-slices}
Assume $f:\Omega\rightarrow\mathbb{O}$ is a slice Fueter-regular function. Then for any $\mathbb{I}\in\mathcal{N}$, the corresponding restriction $f_\mathbb{I}:\Omega_\mathbb{I}\rightarrow\mathbb{O}$ is harmonic, and thus analytic $($w.r.t. the real variables $q_0,\cdots,q_3$$)$.
\end{corollary}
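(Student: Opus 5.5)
The plan is to reduce to the classical fact that Fueter-regular functions are harmonic, applied componentwise on each quaternion slice. By Theorem \ref{thm-Fueter-regularity-on-quaternion-slices}, $D_{\mathbb{I}}f_\mathbb{I}\equiv 0$ on $\Omega_\mathbb{I}$ for every $\mathbb{I}=(I,J)\in\mathcal{N}$. To use the classical theory, we decompose the target space via the Cayley-Dickson construction $\mathbb{O}=\mathbb{H}_\mathbb{I}+L\mathbb{H}_\mathbb{I}$.

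First, write $f_\mathbb{I}=g+Lh$ with $g,h:\Omega_\mathbb{I}\rightarrow\mathbb{H}_\mathbb{I}$ of class $\mathcal{C}^1$. The multiplication rules of the Cayley-Dickson construction give $\epsilon(L h)=L(\overline{\epsilon}h)$ for $\epsilon\in\mathbb{H}_\mathbb{I}$. Hence
$$
D_{\mathbb{I}}f_\mathbb{I}=D_{\mathbb{I}}g+L\,\overline{D}_{\mathbb{I}}h,
\qquad
\overline{D}_{\mathbb{I}}=\frac{\partial}{\partial q_0}-I\frac{\partial}{\partial q_1}-J\frac{\partial}{\partial q_2}-IJ\frac{\partial}{\partial q_3}.
$$
Since the sum is direct, $D_{\mathbb{I}}f_\mathbb{I}\equiv0$ splits into two quaternionic equations: $D_{\mathbb{I}}g\equiv 0$, and $\overline{D}_{\mathbb{I}}h\equiv0$.

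Next, we upgrade regularity. Since $\mathbb{H}_\mathbb{I}$ is an associative algebra isomorphic to $\mathbb{H}$, the equation $D_{\mathbb{I}}g=0$ says exactly that $g$ is left Fueter-regular in the classical sense. The equation $\overline{D}_{\mathbb{I}}h=0$ becomes a Cauchy-Fueter equation after the orthogonal change of variables $q_k\mapsto -q_k$ for $k=1,2,3$. Both equations are first-order elliptic systems with constant coefficients. A $\mathcal{C}^1$ weak solution is therefore smooth, by Weyl's lemma or by the Cauchy-Fueter integral formula (see, e.g., \cite{Sudbery-1979}). So $g$ and $h$ are $\mathcal{C}^\infty$.

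Then we apply the factorization of the Laplacian. In the associative algebra $\mathbb{H}_\mathbb{I}$ we have $\overline{D}_{\mathbb{I}}D_{\mathbb{I}}=D_{\mathbb{I}}\overline{D}_{\mathbb{I}}=\Delta_4$, because the cross terms cancel by anticommutation of $I,J,IJ$ and the squares give $+1$ after the sign change. Hence $\Delta_4 g=\overline{D}_{\mathbb{I}}(D_{\mathbb{I}}g)=0$ and $\Delta_4 h=D_{\mathbb{I}}(\overline{D}_{\mathbb{I}}h)=0$. Consequently $\Delta_4 f_\mathbb{I}=\Delta_4 g+L\Delta_4 h=0$, because $\Delta_4$ is a real scalar operator acting componentwise. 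Harmonic functions are real analytic, which gives the second assertion.

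The main obstacle is the regularity step: the definition only gives $f\in\mathcal{C}^1$, so we cannot apply a second-order operator directly. This is why we decompose into $\mathbb{H}_\mathbb{I}$-valued parts, where the classical smoothness of $\mathcal{C}^1$ Fueter-regular functions applies. Alternatively, one could mollify: convolution commutes with $D_{\mathbb{I}}$, so the mollified functions are smooth, satisfy $D_{\mathbb{I}}f^\varepsilon=0$, and are therefore harmonic by the computation above. Their locally uniform limit $f_\mathbb{I}$ is then harmonic by the mean value property. The octonionic identity $\epsilon(Lh)=L(\overline{\epsilon}h)$ should be checked against the sign conventions fixed in Section 2, but only its structure (it swaps $D_{\mathbb{I}}$ and $\overline{D}_{\mathbb{I}}$) matters here.
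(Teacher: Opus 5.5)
Your proof is correct and follows the same route as the paper: a Cayley--Dickson splitting of $f_\mathbb{I}$ into two $\mathbb{H}_\mathbb{I}$-valued pieces, each satisfying a Fueter-type equation and hence harmonic. Your $Lh$ with $\overline{D}_{\mathbb{I}}h=0$ is the paper's $G_{\mathbb{I},2}L$ with $G_{\mathbb{I},2}=\overline{h}$ right Fueter-regular, and where the paper cites Sudbery for harmonicity, you prove it directly via the regularity upgrade and $\overline{D}_{\mathbb{I}}D_{\mathbb{I}}=\Delta_4$.
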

\begin{proof}
Thanks to Theorem \ref{thm-Fueter-regularity-on-quaternion-slices}, it is easy to see that the splitting property demonstrated in Lemma 4.6 and Remark 4.7 of \cite{Jin-2020} remains valid for the generalized slice Fueter-regular functions introduced in this paper.
Indeed, under the given assumption of this corollary, for any $\mathbb{I}=(I,J)\in\mathcal{N}$ and any $L\in\mathbb{S}$ with $L\perp I,J$, there exist a left Fueter-regular function $f:\Omega_\mathbb{I}\rightarrow\mathbb{H}_\mathbb{I}$ and a right Fueter-regular function $G_{\mathbb{I},2}:\Omega_\mathbb{I}\rightarrow\mathbb{H}_\mathbb{I}$ such that $f_\mathbb{I}\equiv G_{\mathbb{I},1}+G_{\mathbb{I},2}L$. As shown in the proof of Theorem 11 of \cite{Sudbery-1979}, every Fueter-regular function is harmonic, which implies that $G_{\mathbb{I},1}$ and $G_{\mathbb{I},2}$  both are harmonic. We thus know $f_\mathbb{I}$ is harmonic as well.
This completes the proof.
\end{proof}
\subsection{Real analyticity of stem functions}
Based on what have been known in the earlier part of this section, we can now derive that all slice Fueter-regular functions are real analytic, so are their stem functions. First, we verify the real analyticity of the stem functions.
\begin{theorem}\label{thm-real-analytic-of-stem-function}
Assume $f:\Omega\rightarrow\mathbb{O}$ is a slice Fueter-regular function and $B$ is an open ball in $\Omega
$.
Then the corresponding local stem function $(u_B,v_B):\cup_{I\in\mathbb{S}}B^I\rightarrow\mathbb{O}^2$ is analytic $($w.r.t. the real variables $\alpha$ and $\beta$$)$.
\end{theorem}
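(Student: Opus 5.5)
The plan is to combine the local representation formula \eqref{eq-remark-expression-global-stem-function-local-version} with Corollary \ref{cor-real-analytic-slice-Fueter-function-on-quaternion-slices}. The idea is to express $(u_B,v_B)$ near any point as a fixed octonionic combination of values of $f$ restricted to a single quaternion slice. On that slice $f$ is already known to be real analytic.

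Fix $z'\in\cup_{I\in\mathbb{S}}B^I$ and choose $I'\in\mathbb{S}$ with $z'_{I'}\in B$. By openness of $B$ there is $\delta>0$ such that $z_I\in B$ whenever $\lvert z-z'\rvert<\delta$ and $\lvert I-I'\rvert<\delta$. Pick $J\perp I'$ and set $\mathbb{I}=(I',J)\in\mathcal{N}$. Choose $I_1:=I'$ and $I_2:=I'\cos\theta+J\sin\theta$, with $\theta\neq 0$ small enough that $\lvert I_2-I'\rvert<\delta$ and $I_1\neq I_2$. Both $I_1$ and $I_2$ lie in $\mathbb{H}_\mathbb{I}$. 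Formula \eqref{eq-remark-expression-global-stem-function-local-version} then gives, for $\lvert z-z'\rvert<\delta$,
\[
v_B(z)=(I_1-I_2)^{-1}\bigl(f_\mathbb{I}(z_{I_1})-f_\mathbb{I}(z_{I_2})\bigr),\qquad u_B(z)=f_\mathbb{I}(z_{I_1})-I_1\bigl(v_B(z)\bigr).
\]
Left multiplication by a fixed octonion is a real linear map of $\mathbb{O}\cong\mathbb{R}^8$, so it preserves real analyticity. It therefore suffices to show that each of the maps $z\mapsto f_\mathbb{I}(z_{I_k})$, $k=1,2$, is real analytic in $(\alpha,\beta)$ near $z'$.

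Each map $z\mapsto z_{I_k}=\alpha+\beta I_k$ is a real affine (in fact linear) map from $\mathbb{C}$ into $\mathbb{H}_\mathbb{I}$, written in the coordinates $q_0,\dots,q_3$. It sends the disc $\lvert z-z'\rvert<\delta$ into $B_\mathbb{I}\subset\Omega_\mathbb{I}$. By Corollary \ref{cor-real-analytic-slice-Fueter-function-on-quaternion-slices}, $f_\mathbb{I}$ is harmonic and hence real analytic in $(q_0,\dots,q_3)$ on $\Omega_\mathbb{I}$. The composition of a real analytic map with a linear map is real analytic, so $z\mapsto f_\mathbb{I}(z_{I_k})$ is real analytic near $z'$. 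Since $z'$ was arbitrary, $(u_B,v_B)$ is real analytic on $\cup_{I\in\mathbb{S}}B^I$.

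The only genuinely delicate step is forcing both $I_1$ and $I_2$ into a single quaternion slice while keeping every $z_{I_k}$ inside $B$. The rotation inside the plane spanned by $I',J$ does exactly this, and no parity or sign issue across the real axis arises because the formula is valid for all $z$ in the disc. Near real points $z'$, the disc may cross $\mathbb{R}$, but \eqref{eq-remark-expression-global-stem-function-local-version} still holds there: at real $z$ we get $z_{I_1}=z_{I_2}$, which gives $v_B=0$ and $u_B=f$, consistent with the convention $(u_x,v_x)=(f(x),0)$. So these points need no separate treatment.
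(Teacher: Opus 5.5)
Your proof is correct and follows essentially the same route as the paper. The paper also takes two nearby units $I_1\neq I_2$ in the local representation formula, places $\mathbb{C}_{I_1},\mathbb{C}_{I_2}$ inside a single quaternion slice $\mathbb{H}_\mathbb{I}$, and concludes from the real analyticity of $f_\mathbb{I}$ and the fact that real-linear compositions and combinations preserve analyticity. The only difference is cosmetic: you build $I_2$ explicitly in the plane spanned by $I'$ and $J$, whereas the paper first picks $I_1,I_2$ and then asserts that a suitable $\mathbb{I}\in\mathcal{N}$ exists.
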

\begin{proof}
Recall that in the proof of Theorem \ref{thm-equiv-slice-Fueter-and-Vekua}, we have seen that
for any given $z'\in\cup_{I\in\mathbb{S}}B^I$, there exist a positive constant  $\delta>0$ and two different imaginary units $I_1, I_2$  such that
\begin{equation*}
\left\{\begin{aligned}
u_B(z)=& f(z_{I_1})-I_1\left((I_1-I_2)^{-1} (f(z_{I_1})-f(z_{I_2}))\right), \\
v_B(z)=&(I_1-I_2)^{-1} (f(z_{I_1})-f(z_{I_2})),
\end{aligned}\right. \quad\text{ when }\lvert z-z'\rvert<\delta.
\end{equation*}
Obviously, one can always find a pair $\mathbb{I}=(I,J)\in\mathcal{N}$ satisfying $\mathbb{C}_{I_k}\subset\mathbb{H}_\mathbb{I}$ $(k=1,2)$.
Thus the equalities above can be rewritten as
\begin{equation}\label{eq-thm-real-analytic-of-stem-function-2}
\left\{\begin{aligned}
u_B(z)=& f_\mathbb{I}(z_{I_1})-I_1\left((I_1-I_2)^{-1} (f_\mathbb{I}(z_{I_1})-f_\mathbb{I}(z_{I_2}))\right), \\
v_B(z)=&(I_1-I_2)^{-1} (f_\mathbb{I}(z_{I_1})-f_\mathbb{I}(z_{I_2})),
\end{aligned}\right. \quad\text{ when }\lvert z-z'\rvert<\delta.
\end{equation}
We notice that $f_\mathbb{I}$ is real analytic as mentioned in Corollary \ref{cor-real-analytic-slice-Fueter-function-on-quaternion-slices}, and the mappings $z\to z_{I_k}$ $(k=1,2)$ are real linear, and thus real analytic.
As well known, the linear combinations and compositions of real analytic functions are still real analytic $($see, e.g., Propositions 2.2.2 and 2.2.8 in \cite{Krantz-2002}$)$. Therefore, due to \eqref{eq-thm-real-analytic-of-stem-function-2}, we see that $(u_B,v_B)$ is real analytic in a neighborhood of $z'$, which completes the proof.
\end{proof}

Theorem \ref{thm-real-analytic-of-stem-function} leads to the following corollary.
\begin{corollary}\label{cor-real-analytic-of-slice-Fueter-function}
Every slice Fueter-regular function is analytic $($w.r.t. the real variables $x_0,\cdots,x_7$$)$.
\end{corollary}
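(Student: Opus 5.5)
The plan is to work locally: analyticity is a local property, so I fix an arbitrary open ball $B\subset\Omega$ and show that $f\rvert_B$ is analytic in $x_0,\cdots,x_7$. Outside $\mathbb{R}$ this is almost immediate. By \eqref{eq-local-expression-of-f-by-stem-function-2},
$$f(x)=u_B(z_x)+\frac{\Im(x)}{\lvert\Im(x)\rvert}v_B(z_x),\qquad z_x=\Re(x)+i\lvert\Im(x)\rvert .$$
By Theorem \ref{thm-real-analytic-of-stem-function}, $(u_B,v_B)$ is analytic. The maps $x\mapsto z_x$ and $x\mapsto \Im(x)/\lvert\Im(x)\rvert$ are analytic on $\mathbb{R}^8\setminus\mathbb{R}$, since $\lvert \Im(x)\rvert=\sqrt{x_1^2+\cdots+x_7^2}$ is analytic where it is nonzero. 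Compositions and products of analytic maps are analytic, so $f$ is analytic on $B\setminus\mathbb{R}$.

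The main obstacle is the set of real points of $B$, where $\lvert\Im(x)\rvert$ fails to be analytic. Here I will use the parity of the stem function instead of the formula above. Fix $x'=\alpha_0\in B\cap\mathbb{R}$. Then $\cup_{I\in\mathbb{S}}B^I$ contains a disc around $\alpha_0\in\mathbb{C}$, and on it $(u_B,v_B)$ is analytic by Theorem \ref{thm-real-analytic-of-stem-function}. By Remark \ref{rem-exit-of-local-stem-function}, $u_B$ is even and $v_B$ is odd in $\beta$. So in the convergent power series of $u_B$ around $(\alpha_0,0)$ only even powers of $\beta$ occur, and in that of $v_B$ only odd powers occur. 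Regrouping the terms (absolute convergence permits this), I obtain analytic functions $U,V$ near $(\alpha_0,0)\in\mathbb{R}^2$ with values in $\mathbb{O}$ such that
$$u_B(\alpha+i\beta)=U(\alpha,\beta^2),\qquad v_B(\alpha+i\beta)=\beta\, V(\alpha,\beta^2)$$
for $\lvert\alpha-\alpha_0\rvert,\lvert\beta\rvert$ small. The radius of convergence of $U,V$ in the second variable is the square of that in $\beta$, so the domain is fine.

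Substituting this into the representation yields, for $x$ near $\alpha_0$ with $\Im(x)\neq0$,
$$f(x)=U\big(x_0,\lvert\Im(x)\rvert^2\big)+\Im(x)\,V\big(x_0,\lvert\Im(x)\rvert^2\big).$$
The right-hand side is a composition of analytic maps, because $\lvert\Im(x)\rvert^2=\sum_{l=1}^7x_l^2$ is a polynomial. So it is analytic on a full neighborhood of $\alpha_0$ in $\mathbb{O}$. It remains to check that this identity also holds on the real points. For real $x$ the right-hand side equals $U(x_0,0)=u_B(x_0)$. By the definition of the stem vector at real points, together with \eqref{eq-local-expression-of-f-by-stem-function} at $\beta=0$, we have $f(x_0)=u_B(x_0)+Iv_B(x_0)$, and $v_B(x_0)=0$ by oddness. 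Alternatively, both sides are continuous, so agreement off $\mathbb{R}$ already forces agreement on $\mathbb{R}$. Hence $f$ is analytic near every real point as well.

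Combining the two cases, $f$ is analytic on every ball $B\subset\Omega$, hence on $\Omega$. The only delicate step is the even/odd regrouping into $U(\alpha,\beta^2)$ and $\beta V(\alpha,\beta^2)$, which is routine once it is written at the level of power series.
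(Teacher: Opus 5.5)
Your proof is correct and matches the paper's argument step by step: you get analyticity off $\mathbb{R}$ from $f(x)=u_B(z_x)+\frac{\Im(x)}{\lvert\Im(x)\rvert}v_B(z_x)$, and near real points you use the even/odd Taylor expansions of $u_B,v_B$ in $\beta$ with $\beta^2=\lvert\Im(x)\rvert^2$. Writing these as $U(\alpha,\beta^2)$ and $\beta V(\alpha,\beta^2)$ makes explicit the convergence and regrouping step that the paper leaves implicit when it reads the substituted double series as a Taylor expansion in $x_0-a,x_1,\dots,x_7$.
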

\begin{proof}
Assume $f:\Omega\rightarrow\mathbb{O}$ is a slice Fueter-regular function and $B$ is an arbitrary open ball in $\Omega$.
It suffices to prove $f$ is real analytic on $B$.
In the beginning of the proof of Theorem \ref{thm-equiv-slice-Fueter-and-Vekua}, we have showed that
for any $x\in B\setminus\mathbb{R}$, the following equality
\begin{equation}\label{eq-cor-real-analytic-of-slice-Fueter-function-1}
f(x)=u_B(z_x)+\frac{\Im(x)}{\lvert \Im(x)\rvert}v_B(z_x).
\end{equation}
holds true, where $z_x=\Re(x)+i\lvert\Im(x)\rvert$. On the right side of \eqref{eq-cor-real-analytic-of-slice-Fueter-function-1}, the terms $\frac{\Im(x)}{\lvert \Im(x)\rvert}$ and $z_x$ can be considered as real analytic function on $B\setminus\mathbb{R}$; moreover, $u_B$ and $v_B$ are both real analytic on $\cup_{I\in\mathbb{S}}B^I$. Thus \eqref{eq-cor-real-analytic-of-slice-Fueter-function-1} leads to the conclusion that $f$ is real analytic on $B\setminus\mathbb{R}$.

Now it remains to prove that $f$ admits a power series expansion around any point $a\in B\cap\mathbb{R}$. Firstly, we notice $u_B$ and $v_B$ are both real analytic around the real number $a$ due to Theorem \ref{thm-real-analytic-of-stem-function}.
Furthermore, as mentioned in Remark \ref{rem-exit-of-local-stem-function}, $(u_B,v_B)$ is an even-odd pair, that is, $u_B$ is even and $v_B$ is odd w.r.t. $\beta$, which implies that $$\left(\frac{\partial}{\partial\alpha}\right)^n\left(\frac{\partial}{\partial\beta}\right)^{2m+1}u_B(a)= \left(\frac{\partial}{\partial\alpha}\right)^n\left(\frac{\partial}{\partial\beta}\right)^{2m}v_B(a)=0$$
holds for all non-negative integers $n$ and $m$. Hence, the Taylor expansions of $u_B$ and $v_B$ around the point $a$ exist and take the following forms:
\begin{equation}\label{eq-cor-real-analytic-of-slice-Fueter-function-2}
\begin{split}
   u_B(z)&=\sum_{n,m=0}^{+\infty}C_{n,m}(\alpha-a)^n\beta^{2m}, \\
   v_B(z)&=\sum_{n,m=0}^{+\infty}C'_{n,m}(\alpha-a)^n\beta^{2m+1},
\end{split}
\end{equation}
where $C_{n,m}$ and $C'_{n,m}$ are octonion constants.
Replacing $\alpha$ and $\beta$ by $\Re(x)$ and $\lvert\Im(x)\rvert$ respectively on the right side of \eqref{eq-cor-real-analytic-of-slice-Fueter-function-2}, and $z$ by $z_x$ on the left side  correspondingly,
we thus have
\begin{equation*}
\begin{split}
   u_B(z_x)&=\sum_{n,m=0}^{+\infty}C_{n,m}\left(\Re(x)-a\right)^n\lvert\Im(x)\rvert^{2m}, \\
   v_B(z_x)&=\sum_{n,m=0}^{+\infty}C'_{n,m}\left(\Re(x)-a\right)^n\lvert\Im(x)\rvert^{2m+1}.
\end{split}
\end{equation*}
Substituting the equalities above into \eqref{eq-cor-real-analytic-of-slice-Fueter-function-1}, we finally obtain
\begin{equation*}
f(x)=\sum_{n,m=0}^{+\infty}\left(C_{n,m}+\Im(x)C'_{n,m}\right)\left(\Re(x)-a\right)^n\lvert\Im(x)\rvert^{2m},
\end{equation*}
which is essentially a Taylor expansion $($w.r.t. the real variables $x_0,\cdots,x_7$$)$ of $f$ around the point $a$, since the terms $\left(\Re(x)-a\right)^n$, $\Im(x)$ and $\lvert\Im(x)\rvert^{2m}$ are all homogeneous polynomials of $(x_0-a),x_1,\cdots,x_7$. This completes the proof.
\end{proof}

\subsection{Maximum modulus principle}
In the end of this section, we intend to establish  a new version of maximum modulus principle, to which the key is the following lemma.
\begin{lem}\label{lem-norm-subharmonic-on-quaternion-slice}
Assume $f:\Omega\rightarrow\mathbb{O}$ is a slice Fueter-regular function. Then $\lvert f_\mathbb{I}\rvert:\Omega_\mathbb{I}\rightarrow\mathbb{R}$ is subharmonic for any $\mathbb{I}\in\mathcal{N}$.
\end{lem}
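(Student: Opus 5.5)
Fix $\mathbb{I}=(I,J)\in\mathcal{N}$ and identify $\mathbb{O}$ with $\mathbb{R}^8$ through its real components. The plan is to show that $\lvert f_\mathbb{I}\rvert$ is the Euclidean norm of a vector-valued harmonic function, and then use the classical fact that such a norm is subharmonic.

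First, by Corollary \ref{cor-real-analytic-slice-Fueter-function-on-quaternion-slices}, $f_\mathbb{I}:\Omega_\mathbb{I}\rightarrow\mathbb{O}$ is harmonic with respect to $q_0,\cdots,q_3$. Harmonicity here means each of the eight real components $h_0,\dots,h_7$ of $f_\mathbb{I}$, defined by $f_\mathbb{I}=\sum_l h_l e_l$, is a real-valued harmonic function on the open set $\Omega_\mathbb{I}\subset\mathbb{H}_\mathbb{I}\cong\mathbb{R}^4$. This holds because the Laplacian $\sum_{k=0}^{3}\partial^2/\partial q_k^2$ acts componentwise. Consequently $\lvert f_\mathbb{I}\rvert=\bigl(\sum_{l=0}^7 h_l^2\bigr)^{1/2}$, and $\lvert f_\mathbb{I}\rvert$ is continuous.

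Second, I would show that $\lvert f_\mathbb{I}\rvert$ satisfies the sub-mean-value property on every closed ball $\overline{B(q,r)}\subset\Omega_\mathbb{I}$; together with continuity this gives subharmonicity. Fix such a ball and choose a unit vector $c\in\mathbb{R}^8$ with $\langle c,f_\mathbb{I}(q)\rangle=\lvert f_\mathbb{I}(q)\rvert$ (any unit $c$ will do if $f_\mathbb{I}(q)=0$). The function $\langle c,f_\mathbb{I}\rangle=\sum_l c_l h_l$ is a real harmonic function, so the mean value property applies to it. Combined with Cauchy--Schwarz this gives
\begin{equation*}
\lvert f_\mathbb{I}(q)\rvert=\frac{1}{\lvert\partial B(q,r)\rvert}\int_{\partial B(q,r)}\langle c,f_\mathbb{I}\rangle\,d\sigma\le\frac{1}{\lvert\partial B(q,r)\rvert}\int_{\partial B(q,r)}\lvert f_\mathbb{I}\rvert\,d\sigma .
\end{equation*}
This is the required sub-mean-value inequality.

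Alternatively, $\lvert f_\mathbb{I}\rvert=\sup_{\lvert c\rvert=1}\langle c,f_\mathbb{I}\rangle$ is a continuous supremum of harmonic functions and hence subharmonic. I do not expect any step to be a real obstacle, since the analytic input (harmonicity of $f_\mathbb{I}$) is already supplied by the preceding corollary. The only point needing care is to make explicit that octonion-valued harmonicity means componentwise harmonicity in a fixed real basis, so that the scalar mean value property can be applied to the linear combinations $\langle c,f_\mathbb{I}\rangle$.
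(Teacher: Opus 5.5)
Your proof is correct and is essentially the paper's second argument. The paper applies Jensen's inequality $\lvert\int f_\mathbb{I}\,d\mu\rvert\le\int\lvert f_\mathbb{I}\rvert\,d\mu$ on spheres together with the mean value property of the harmonic map $f_\mathbb{I}$ from Corollary \ref{cor-real-analytic-slice-Fueter-function-on-quaternion-slices}; your supporting unit vector $c$ with Cauchy--Schwarz simply proves that same inequality directly, and your "supremum of the harmonic functions $\langle c,f_\mathbb{I}\rangle$" variant mirrors the paper's first argument (a convex function composed with a harmonic map is subharmonic).
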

\begin{proof}
A short approach is as follows:
We notice that the norm $\lvert\cdot\rvert:\mathbb{O}\rightarrow\mathbb{R}$ is apparently convex, and the function $f_\mathbb{I}:\Omega_\mathbb{I}\rightarrow\mathbb{O}$ is harmonic according to Corollary  \ref{cor-real-analytic-slice-Fueter-function-on-quaternion-slices}.
As well known, a composition of a convex function and a harmonic function is subharmonic (see, e.g., Theorem 3.4 of \cite{Ishihara-1979}), indicating that $\lvert f_\mathbb{I}\rvert:\Omega_\mathbb{I}\rightarrow\mathbb{R}$ is subharmonic.

In order to make this lemma more convincing, we would like to offer another more fundamental approach as follows.

Step $1$: Place $f_\mathbb{I}$ into a certain probability space.
For convenience, we denote the $3$-dimensional sphere in $\Omega_\mathbb{I}$ with center $q\in\Omega_\mathbb{I}$ and ratio $r>0$ by $\mathbb{S}_{\mathbb{I},q,r}$.
We may equip $\mathbb{O}$ with its Borel $\sigma$-algebra, and equip every $\mathbb{S}_{\mathbb{I},q,r}$ with its Borel $\sigma$-algebra (denoted as $\mathcal{B}_{\mathbb{I},q,r}$) as well.
In addition, we let $\mu_{\mathbb{I},q,r}$ stand for the uniform measure on the sphere $\mathbb{S}_{\mathbb{I},q,r}$, which is a probability measure invariant under the spherical rotations.
Since $f_\mathbb{I}:\Omega_\mathbb{I}\rightarrow\mathbb{O}$ is continuous, we know its restriction to any sphere $\mathbb{S}_{\mathbb{I},q,r}$ is measurable.

Step $2$: Apply Jensen's inequality to $f_\mathbb{I}$. Now we may identify $\mathbb{O}$ with $\mathbb{R}^8$. Then $f_\mathbb{I}:\mathbb{S}_{\mathbb{I},q,r}\rightarrow\mathbb{O}$ can be considered as a random vector (taking values in $\mathbb{R}^8$) defined on the probability space $(\mathbb{S}_{\mathbb{I},q,r},\mathcal{B}_{\mathbb{I},q,r},\mu_{\mathbb{I},q,r})$, and the norm $\lvert\cdot\rvert:\mathbb{O}\rightarrow\mathbb{R}$ can be considered as a convex real-valued function defined on $\mathbb{R}^8$.
Applying Jensen's inequality for the expectation of a convex real-valued function of
several real variables (see, e.g., Proposition 1.1 of \cite{Perlman-1974}), we thus obtain
\begin{equation}\label{eq-proof-lem-norm-subharmonic-on-quaternion-slice-1}
\left\lvert\int_{\mathbb{S}_{\mathbb{I},q,r}}f_\mathbb{I}  d\mu_{\mathbb{I},q,r}\right\rvert\leq\int_{\mathbb{S}_{\mathbb{I},q,r}}\lvert f_\mathbb{I}\rvert d\mu_{\mathbb{I},q,r}.
\end{equation}

Step $3$: Combine \eqref{eq-proof-lem-norm-subharmonic-on-quaternion-slice-1} with the mean value property.
As shown in Corollary  \ref{cor-real-analytic-slice-Fueter-function-on-quaternion-slices}, $f_\mathbb{I}:\Omega_\mathbb{I}\rightarrow\mathbb{O}$ is harmonic, which indicates that the left side of \eqref{eq-proof-lem-norm-subharmonic-on-quaternion-slice-1} equals $\lvert f_\mathbb{I}(q)\rvert$
according to the mean value property of harmonic functions. Hence, we see that the function $\lvert f_\mathbb{I}\rvert:\Omega_\mathbb{I}\rightarrow\mathbb{R}$ satisfies
\begin{equation*}
\lvert f_\mathbb{I}(q)\rvert\leq\int_{\mathbb{S}_{\mathbb{I},q,r}}\lvert f_\mathbb{I}\rvert d\mu_{\mathbb{I},q,r}.
\end{equation*}
for every sphere $\mathbb{S}_{\mathbb{I},q,r}$ in $\Omega_\mathbb{I}$, in other words, $\lvert f_\mathbb{I}\rvert:\Omega_\mathbb{I}\rightarrow\mathbb{R}$ is subharmonic.
This completes the proof.
\end{proof}

The real-analyticity of slice Fueter-regular functions  and the subharmonicity of their restrictions to quaternion slices lead to a strong version of maximum modulus principle.
\begin{theorem}[strong maximum modulus principle]\label{thm-maximum-modulus-principle-for-slice-Fueter-regular-functions}
If $f:\Omega\rightarrow\mathbb{O}$ is a nonconstant slice Fueter-regular function, then $\lvert f_\mathbb{I}\rvert:\Omega_\mathbb{I}\rightarrow\mathbb{R}$ has no local maximum for any  $\mathbb{I}\in\mathcal{N}$.
\end{theorem}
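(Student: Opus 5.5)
We argue by contradiction. Suppose that for some $\mathbb{I}\in\mathcal{N}$ the function $\lvert f_\mathbb{I}\rvert$ attains a local maximum at a point $q_0\in\Omega_\mathbb{I}$. The strategy has three stages: first show that $f_\mathbb{I}$ is constant near $q_0$, then transfer this local constancy from the quaternion slice to an open subset of $\Omega$, and finally use real analyticity to conclude that $f$ is constant on the whole domain.

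\emph{Step 1: local constancy on the slice.} By Lemma \ref{lem-norm-subharmonic-on-quaternion-slice}, $\lvert f_\mathbb{I}\rvert$ is subharmonic on the open set $\Omega_\mathbb{I}\subset\mathbb{H}_\mathbb{I}\cong\mathbb{R}^4$. Choose a small ball $U\subset\Omega_\mathbb{I}$ centred at $q_0$ on which $\lvert f_\mathbb{I}\rvert\le\lvert f_\mathbb{I}(q_0)\rvert=:M$. The sub-mean value property on spheres centred at $q_0$ forces $\lvert f_\mathbb{I}\rvert\equiv M$ on $U$. If $M=0$, then $f_\mathbb{I}\equiv0$ on $U$. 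If $M>0$, we use the equality case of Jensen's inequality in \eqref{eq-proof-lem-norm-subharmonic-on-quaternion-slice-1}. The mean value property gives $\lvert\int f_\mathbb{I}\,d\mu\rvert=M=\int\lvert f_\mathbb{I}\rvert\,d\mu$ on each small sphere. Equality in the Euclidean triangle inequality for $\mathbb{R}^8$-valued integrals forces $f_\mathbb{I}$ to take values in a single ray. Since its norm is constant, $f_\mathbb{I}\equiv f_\mathbb{I}(q_0)$ on each sphere, and hence on $U$.

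A cleaner alternative uses harmonicity directly: $\Delta\lvert f_\mathbb{I}\rvert^2=2\sum_k\lvert\partial_{q_k}f_\mathbb{I}\rvert^2$, because each real component of $f_\mathbb{I}$ is harmonic. Constancy of $\lvert f_\mathbb{I}\rvert^2$ on $U$ then makes every derivative vanish. I would use this second route.

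\emph{Step 2: from the slice to an open subset of $\mathbb{O}$.} This is the main obstacle, because $U$ is only a 4-dimensional set while $f$ lives on an 8-dimensional domain. The plan is to pass through the local stem function. Take an open ball $B\subset\Omega$ centred at a point of $U$ such that $B_\mathbb{I}\subset U$; if $q_0$ is real, first move to a nearby nonreal point of $U$. For $z$ near $z'$, choose $I_1\ne I_2$ close to some $I$ with $z'_I\in U$, both lying in $\mathbb{H}_\mathbb{I}$. By \eqref{eq-remark-expression-global-stem-function-local-version}, $(u_B,v_B)$ is then expressed through values of $f_\mathbb{I}$ on $U$, which all equal the constant $c:=f_\mathbb{I}(q_0)$. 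This gives $v_B\equiv0$ and $u_B\equiv c$ on an open subset of $\cup_{I}B^I$.

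Theorem \ref{thm-real-analytic-of-stem-function} says $(u_B,v_B)$ is real analytic, and $\cup_I B^I$ is connected: it is a disc symmetric about the real axis, or a pair of conjugate discs. In the two-disc case, the even–odd symmetry of Remark \ref{rem-exit-of-local-stem-function} propagates the identity to the conjugate disc. Hence $(u_B,v_B)\equiv(c,0)$, and \eqref{eq-local-expression-of-f-by-stem-function} yields $f\equiv c$ on $B$.

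\emph{Step 3: global constancy.} By Corollary \ref{cor-real-analytic-of-slice-Fueter-function}, $f$ is real analytic on the connected domain $\Omega$. Since $f-c$ vanishes on the nonempty open set $B$, the identity principle gives $f\equiv c$ on $\Omega$. This contradicts the assumption that $f$ is nonconstant, so $\lvert f_\mathbb{I}\rvert$ has no local maximum.
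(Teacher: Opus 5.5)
Your proof is correct, but it differs from the paper's at both substantive steps.

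\emph{Local constancy on the slice.} The paper first gets $\lvert f_\mathbb{I}\rvert$ constant on $(B_\delta)_\mathbb{I}$ from the maximum principle for subharmonic functions. It then removes the norm with the auxiliary function $g=f+f(x^*)$: $\lvert g_\mathbb{I}\rvert$ also peaks at $x^*$, so it is constant too, which forces equality in the triangle inequality and hence $f_\mathbb{I}\equiv f_\mathbb{I}(x^*)$. Your identity $\Delta\lvert f_\mathbb{I}\rvert^2=2\sum_k\lvert\partial_{q_k}f_\mathbb{I}\rvert^2$ does the same job more directly. The paper's trick, by contrast, needs no differentiation.

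\emph{Passage to an $8$-dimensional set.} The paper never touches stem functions here. Write $x^*=\Re(x^*)+\lvert\Im(x^*)\rvert I^*$ with $I^*\in\mathbb{H}_\mathbb{I}$. For every $x$ in the ball $B_\delta$ centred at $x^*$, the paper observes that $\lvert(\Re(x)+\lvert\Im(x)\rvert I^*)-x^*\rvert\le\lvert x-x^*\rvert<\delta$. Hence the imaginary units $I\in\mathbb{S}_\mathbb{I}$ with $\Re(x)+\lvert\Im(x)\rvert I\in(B_\delta)_\mathbb{I}$ form a nonempty open subset of the $3$-sphere $\mathbb{S}_\mathbb{I}$. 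Choosing $I_1\neq I_2$ there, Corollary \ref{cor-representation-formula} gives $f(x)=f(x^*)$ pointwise on the whole ball.

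Your route instead obtains $(u_B,v_B)\equiv(c,0)$ on a small disc $D$. You then propagate this over $\cup_I B^I$ via Theorem \ref{thm-real-analytic-of-stem-function} and the even--odd symmetry. This is valid but heavier than necessary, and the propagation is redundant: Step 3 already uses the real analyticity of $f$. So it suffices that $f\equiv c$ on the nonempty open set $\{x\in B:\Re(x)+i\lvert\Im(x)\rvert\in D\}$, which follows at once from \eqref{eq-local-expression-of-f-by-stem-function-2}.

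A small inaccuracy, which does not affect your logic: for a ball with nonreal centre, $\cup_I B^I$ is the union of two conjugate discs. It is connected exactly when they overlap, but it is not itself ``a disc'' in that case.
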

\begin{proof}
We may assume that $f:\Omega\rightarrow\mathbb{O}$ is a slice Fueter-regular function, and $\lvert f_\mathbb{I}\rvert:\Omega_\mathbb{I}\rightarrow\mathbb{R}$ has a local maximum at a point $x^*\in \Omega_\mathbb{I}$ for some  $\mathbb{I}\in\mathcal{N}$. Now we intend to prove $f:\Omega\rightarrow\mathbb{O}$ is a constant. This proof consists of three parts as follows.

Step $1$: Show that the restriction of $\lvert f_\mathbb{I}\rvert$ to some neighborhood is a constant via Lemma \ref{lem-norm-subharmonic-on-quaternion-slice}.  According to the definition of local maximum, we know that the maximum of the restriction of $\lvert f_\mathbb{I}\rvert$ to the $4$-dimensional open ball $\{q\in \mathbb{H}_\mathbb{I}: \lvert q-x^*\rvert\leq\delta\}$ is attained at the center $x^*$ when the radius $\delta$ is small enough.
For convenience, we denote the associated $8$-dimensional open ball $\{x\in \mathbb{O}: \lvert x-x^*\rvert\leq\delta\}$ by $B_\delta$.
One can easily see that the former ball is exactly $(B_\delta)_\mathbb{I}$, i.e., the intersection of $B_\delta$ and $\mathbb{H}_\mathbb{I}$.
Now we choose a sufficiently small $\delta$ to ensure that the following conditions hold:
\begin{enumerate}
\item[i.] The maximum of $\lvert f_\mathbb{I}\rvert: (B_\delta)_\mathbb{I}\rightarrow\mathbb{R}$ is attained at $x^*$.
\item[ii.] $B_\delta$ is included by the domain $\Omega$.
\end{enumerate}
Replacing the domain $\Omega$ in Lemma \ref{lem-norm-subharmonic-on-quaternion-slice} with this very ball $B_\delta$, we see that $\lvert f_\mathbb{I}\rvert: (B_\delta)_\mathbb{I}\rightarrow\mathbb{R}$ is subharmonic. As an immediate consequence, the maximum principle for subharmonic functions (see, e.g., Sect. 2.5 of \cite{Rosenblum-1994}) yields that $\lvert f_\mathbb{I}\rvert: (B_\delta)_\mathbb{I}\rightarrow\mathbb{R}$ equals some constant.

Step $2$: Enhance the conclusion of Step $1$ by removing the norm.
To achieve this goal, we introduce the auxiliary function $g:\Omega\to\mathbb{O}$ defined as
$$
g(x):=f(x)+f(x^*)\quad \text{for }x\in\Omega.
$$
Obviously, the triangle inequality for the norm indicates that
\begin{equation}\label{eq-thm-maximum-modulus-principle-for-slice-Fueter-regular-functions-1}
\lvert g_\mathbb{I}(x^*)\rvert=2\lvert f_\mathbb{I}(x^*)\rvert= \lvert f_\mathbb{I}(q)\rvert+\lvert f_\mathbb{I}(x^*)\rvert \geq \lvert g_\mathbb{I}(q)\rvert\quad \text{for }q\in(B_\delta)_\mathbb{I},
\end{equation}
which is to say that the maximum of $\lvert g_\mathbb{I}\rvert: (B_\delta)_\mathbb{I}\rightarrow\mathbb{R}$ is also attained at $x^*$. In addition, $g:\Omega\to\mathbb{O}$ is slice Fueter-regular as well. So a similar argument, the details of which we omit,  yields that $\lvert g_\mathbb{I}\rvert: (B_\delta)_\mathbb{I}\rightarrow\mathbb{R}$ equals some constant. This forces the equality to occur in the last part of \eqref{eq-thm-maximum-modulus-principle-for-slice-Fueter-regular-functions-1}, implying that $f_\mathbb{I}(q)$ and $f_\mathbb{I}(x^*)$, considered as vectors, share the same direction. Furthermore, the conclusion of Step $1$ says that $f_\mathbb{I}(q)$ and $f_\mathbb{I}(x^*)$ share the same norm.
Therefore, $f_\mathbb{I}(q)=f_\mathbb{I}(x^*)$ for any $q\in(B_\delta)_\mathbb{I}$, meaning  $f_\mathbb{I}: (B_\delta)_\mathbb{I}\rightarrow\mathbb{O}$ is a constant.

Step $3$: Apply the representation formula to $f:B_\delta\rightarrow\mathbb{O}$.
As in the proof of Theorem \ref{thm-equiv-slice-Fueter-and-Vekua}, we assign a complex number $z_x:=\Re(x)+\lvert\Im(x)\rvert i$ to each $x\in B_\delta$.
And we express the center $x^*$ as $\Re(x^*)+\lvert\Im(x^*)\rvert I^*$ where $I^*$ is an imaginary unit in the quaternion subalgebra $\mathbb{H}_{\mathbb{I}}$.
We claim that $(z_x)_{I^*}$, i.e., $\Re(x)+\lvert\Im(x)\rvert I^*$,  belongs to $(B_\delta)_\mathbb{I}$ for any $x\in B_\delta$.
Indeed, the definition of $B_\delta$ tells us $$\lvert\Re(x)-\Re(x^*)\rvert^2+\lvert\Im(x)-\Im(x^*)\rvert^2<\delta^2, $$
implying
$$\lvert\Re(x)-\Re(x^*)\rvert^2+\lvert\lvert\Im(x)\rvert-\lvert\Im(x^*)\rvert\rvert^2<\delta^2, $$
or equivalently,
$$\lvert(z_x)_{I^*}-x^*\rvert<\delta. $$
We thus know $(z_x)_{I^*}$ belongs to $B_\delta$, and thus belongs to $(B_\delta)_\mathbb{I}$ since $I^*$ is an imaginary unit in $\mathbb{H}_{\mathbb{I}}$.
It confirms what our claimed.

Now we fix the point $x$ temporarily, and consider the mapping $\tau_x:\mathbb{S}_{\mathbb{I}}\rightarrow\mathbb{H}_{\mathbb{I}}$ defined by
$$\tau_x(I):=(z_x)_I=\Re(x)+\lvert\Im(x)\rvert I\quad\text{for }I\in \mathbb{S}_{\mathbb{I}}, $$
where $\mathbb{S}_{\mathbb{I}}$ is the $3$-dimensional sphere consists of all imaginary unit in $\mathbb{H}_{\mathbb{I}}$.
$\tau_x$ is obviously continuous, which indicates that the preimage  $\tau_x^{-1}(B_\delta)_\mathbb{I}$ is an open subset of $\mathbb{S}_{\mathbb{I}}$.
Moreover, the fact that $\Re(x)+\lvert\Im(x)\rvert I^*$ belongs to $(B_\delta)_\mathbb{I}$ tells that $\tau_x^{-1}(B_\delta)_\mathbb{I}$ is non-empty.
These observations enable us to pick two different imaginary units $I_1$ and $I_2$ in $\tau_x^{-1}(B_\delta)_\mathbb{I}$. In addition, we take $I_3=\frac{\Im(x)}{\lvert\Im(x)\rvert}$ if $\Im(x)\neq 0$, and $I_3=$ an arbitrary imaginary unit if $\Im(x)= 0$.
Then we apply the representation formula in Corollary \ref{cor-representation-formula} with $\Omega$ substituted by $B_\delta$ and $z$ by $z_x$ to obtain
\begin{equation*}
f((z_x)_{I_3})=(I_3-I_2)\left((I_1-I_2)^{-1} f((z_x)_{I_1})\right)-(I_3-I_1)\left((I_1-I_2)^{-1}f((z_x)_{I_2})\right).
\end{equation*}
In our current setting, $(z_x)_{I_3}=x$, and $(z_x)_{I_k}$ $(k=1,2)$ belong to $(B_\delta)_\mathbb{I}$. Based on the conclusion of Step $2$, we have
$$f((z_x)_{I_k})=f_\mathbb{I}((z_x)_{I_k})=f_\mathbb{I}(x^*)=f(x^*)\quad\text{for } k=1,2. $$
Thus we see
\begin{equation*}
f(x)=(I_3-I_2)\left((I_1-I_2)^{-1} f(x^*)\right)-(I_3-I_1)\left((I_1-I_2)^{-1}f(x^*)\right)=f(x^*)
\end{equation*}
holds for all $x\in B_\delta$.

In conclusion, the restriction of $f: \Omega\rightarrow\mathbb{O}$ to the open ball $B_\delta$ equals some constant, which implies $f: \Omega\rightarrow\mathbb{O}$ globally equals some constant since it is a real analytic function as shown by Corollary \ref{cor-real-analytic-of-slice-Fueter-function} and $\Omega$ is a domain.
The proof is now completed.
\end{proof}

Note that if $\lvert f\rvert: \Omega\rightarrow\mathbb{R}$  has a local maximum, then $\lvert f_\mathbb{I}\rvert: \Omega_\mathbb{I}\rightarrow\mathbb{R}$  has the same local maximum for some $\mathbb{I}\in\mathcal{N}$.
So a weaker version of maximum modulus principle follows immediately.
\begin{corollary}[weak maximum modulus principle]\label{cor-maximum-modulus-principle-for-slice-Fueter-regular-functions}
If $f:\Omega\rightarrow\mathbb{O}$ is a nonconstant slice Fueter-regular function, then $\lvert f\rvert:\Omega\rightarrow\mathbb{R}$ has no local maximum.
\end{corollary}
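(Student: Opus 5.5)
The plan is to reduce the corollary to Theorem \ref{thm-maximum-modulus-principle-for-slice-Fueter-regular-functions} by finding a quaternion slice through the maximum point. Suppose, for contradiction, that $f:\Omega\rightarrow\mathbb{O}$ is a nonconstant slice Fueter-regular function and that $\lvert f\rvert$ has a local maximum at some $x^*\in\Omega$. Then there is a $\delta>0$ with $B_\delta:=\{x\in\mathbb{O}:\lvert x-x^*\rvert<\delta\}\subset\Omega$ and $\lvert f(x)\rvert\leq\lvert f(x^*)\rvert$ for all $x\in B_\delta$.

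First I choose a pair $\mathbb{I}\in\mathcal{N}$ with $x^*\in\mathbb{H}_\mathbb{I}$.
\begin{itemize}
\item If $\Im(x^*)\neq 0$, I set $I:=\Im(x^*)/\lvert\Im(x^*)\rvert$ and pick any imaginary unit $J\perp I$.
\item If $x^*\in\mathbb{R}$, I take an arbitrary pair $(I,J)\in\mathcal{N}$.
\end{itemize}
In both cases $x^*\in\mathbb{C}_I\subset\mathbb{H}_\mathbb{I}$, so $x^*\in\Omega_\mathbb{I}$.

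Next I restrict the inequality to the slice. Since $(B_\delta)_\mathbb{I}=B_\delta\cap\mathbb{H}_\mathbb{I}$ is an open neighborhood of $x^*$ in $\Omega_\mathbb{I}$, we get $\lvert f_\mathbb{I}(q)\rvert\leq\lvert f_\mathbb{I}(x^*)\rvert$ for every $q\in(B_\delta)_\mathbb{I}$. Thus $\lvert f_\mathbb{I}\rvert:\Omega_\mathbb{I}\rightarrow\mathbb{R}$ has a local maximum at $x^*$.

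This contradicts Theorem \ref{thm-maximum-modulus-principle-for-slice-Fueter-regular-functions}, which says $\lvert f_\mathbb{I}\rvert$ has no local maximum when $f$ is nonconstant. Hence $\lvert f\rvert$ has no local maximum.

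There is no real obstacle here. The only point to check is that $\mathbb{H}_\mathbb{I}$ genuinely contains $x^*$, which is exactly what the case split above ensures.
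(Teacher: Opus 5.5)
Your proposal is correct and follows the same approach as the paper. The paper simply notes that a local maximum of $\lvert f\rvert$ is also a local maximum of $\lvert f_\mathbb{I}\rvert$ for a suitable $\mathbb{I}\in\mathcal{N}$ and then invokes Theorem~\ref{thm-maximum-modulus-principle-for-slice-Fueter-regular-functions}; your case split for choosing $\mathbb{I}$ with $x^*\in\mathbb{H}_\mathbb{I}$ just makes that step explicit.
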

\begin{remark}
The weak maximum modulus principle for slice Fueter-regular functions gives a negative answer to Question 38 posed by R. Ghiloni in \cite{Ghiloni-2021}.
\end{remark}

\section{Conditional uniqueness of stem vectors}
As demonstrated by the example at the end of Section \ref{subsection-slice-Fueter-function}, for a slice Fueter-regular function $f:\Omega\mapsto\mathbb{O}$, even if two distinct points $x_1, x_2\in\Omega$ have the same real part and their imaginary parts share the same norm, it is still possible that $f$ have different stem vectors at these two points.
A question naturally arises: Under what conditions does the uniqueness of stem vectors hold?
We notice that among all the correct answers to this question, one is quite trivial and relatively strong: According to Definition \ref{def-slice-function}, we can easily see that if the points $x_1, x_2$ mentioned above satisfy the additional condition that the imaginary units $\frac{\Im(x_1)}{\lvert\Im(x_1)\rvert}$ and $\frac{\Im(x_2)}{\lvert\Im(x_2)\rvert}$ are located in the same connected component of $\mathbb{S}(\Omega,a,b)$ where $a$ is the common real part of $x_k$ and $b$ is the common norm of the imaginary parts of $x_k$ $(k=1,2)$, then the stem vectors of $f$ at these two points are identical.
This answer is apparently not satisfactory, which gives us a strong motivation to discover a weaker condition under which the uniqueness of stem vectors holds.
And based on the authors' personal preference, this new condition is expected to have a geometric style.

\subsection{Octonionic circular liftings of pathes in the complex plane}
Recall that the symbol $\mathbb{S}$ stands for the sphere consisting of all imaginary octonion units, and $\tau_I$ ($I\in\mathbb{S}$) is a function defined on $\mathbb{C}$ with the following expression:
$$\tau_I(\alpha+\beta i)=\alpha+\beta I$$
where $\alpha$ and $\beta$ are arbitrary real numbers.
Hereinafter, a continuous mapping from the interval $[0,1]$ to a topological space $X$ is referred to as a path in $X$.
Now we intend to introduce a fundamental concept related to the uniqueness of stem vectors.
\begin{definition}\label{def-circular-lifting}
Let $\gamma$ and $\Theta$ be pathes in $\mathbb{C}$ and $\mathbb{S}$ respectively. Then the mapping: $$[0,1]\to\mathbb{O}, \quad t\mapsto\tau_{\Theta(t)}(\gamma(t)), $$
is called the (octonionic) circular lifting of $\gamma$ via $\Theta$, and denoted by $\gamma_\Theta$.
\end{definition}
In addition, we call $\gamma$ the base of $\gamma_\Theta$, and $\Theta$ the spherical coordinate of $\gamma_\Theta$. Obviously, every circular lifting is a path in $\mathbb{O}$. And conversely, every path in $\mathbb{O}$ which avoids the real axis is a circular lifting as stated in the following theorem.

\begin{theorem}
 Assume that $\tilde{\gamma}$ is a path in $\mathbb{O}\setminus\mathbb{R}$. Then there exist a path $\gamma$ in $\mathbb{C}$ and a path $\Theta$ in $\mathbb{S}$ such that $\tilde{\gamma}=\gamma_\Theta$.
\end{theorem}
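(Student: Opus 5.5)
The plan is to build $\gamma$ and $\Theta$ directly from the polar-type decomposition of octonions off the real axis. For $x\in\mathbb{O}\setminus\mathbb{R}$ we have the unique representation $x=\Re(x)+\lvert\Im(x)\rvert I$ with $I=\Im(x)/\lvert\Im(x)\rvert\in\mathbb{S}$, recalled in Section 2. Accordingly we define
$$
\gamma(t):=\Re(\tilde{\gamma}(t))+\lvert\Im(\tilde{\gamma}(t))\rvert\, i,\qquad \Theta(t):=\frac{\Im(\tilde{\gamma}(t))}{\lvert\Im(\tilde{\gamma}(t))\rvert}\qquad\text{for }t\in[0,1].
$$

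First we check that these are paths in $\mathbb{C}$ and $\mathbb{S}$ respectively. The maps $\Re$, $\Im$ and the norm are continuous on $\mathbb{O}$. Since $\tilde{\gamma}(t)\notin\mathbb{R}$, the quantity $\lvert\Im(\tilde{\gamma}(t))\rvert$ is strictly positive for every $t$. Hence the quotient defining $\Theta$ is continuous, has zero real part and unit norm, and so $\Theta$ takes values in $\mathbb{S}$. Continuity of $\gamma$ is immediate.

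Next we verify the identity $\tilde{\gamma}=\gamma_\Theta$ pointwise, using Definition \ref{def-circular-lifting}:
$$
\gamma_\Theta(t)=\tau_{\Theta(t)}(\gamma(t))=\Re(\tilde{\gamma}(t))+\lvert\Im(\tilde{\gamma}(t))\rvert\,\Theta(t)=\Re(\tilde{\gamma}(t))+\Im(\tilde{\gamma}(t))=\tilde{\gamma}(t).
$$

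There is no real obstacle. The only point needing care is the nonvanishing of $\lvert\Im(\tilde{\gamma}(t))\rvert$, which is exactly what the hypothesis that $\tilde{\gamma}$ avoids $\mathbb{R}$ provides. This is also why the statement fails for paths that cross the real axis: there $\Theta$ cannot in general be chosen continuously. As a side remark, the base $\gamma$ constructed this way lies in the open upper half-plane. This is one valid choice among others; for instance, one could replace $(\gamma,\Theta)$ by $(\overline{\gamma},-\Theta)$.
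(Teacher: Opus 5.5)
Your proof is correct and is exactly the paper's argument: the paper takes the same $\gamma(t)=\Re(\tilde{\gamma}(t))+\lvert\Im(\tilde{\gamma}(t))\rvert i$ and $\Theta(t)=\Im(\tilde{\gamma}(t))/\lvert\Im(\tilde{\gamma}(t))\rvert$ and simply asserts that $\tilde{\gamma}=\gamma_\Theta$. You also spell out the continuity of $\Theta$ using $\lvert\Im(\tilde{\gamma}(t))\rvert>0$, which the paper leaves implicit.
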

\begin{proof}
Taking $\gamma(t):=\Re(\tilde{\gamma}(t))+\lvert\Im(\tilde{\gamma}(t))\rvert i$ and $\Theta(t):=\frac{\Im(\tilde{\gamma}(t))}{\lvert\Im(\tilde{\gamma}(t))\rvert}$, one can easily see that $\tilde{\gamma}=\gamma_\Theta$ holds true.
\end{proof}
Note that a path in $\mathbb{O}$ which intersects the real axis is not necessarily a circular lifting. For example, the polygonal path connecting the three vertices $I, 0, J$ with $I,J\in\mathbb{S}$ isn't a circular lifting unless $I=\pm J$. But every path in $\mathbb{O}$ can always be approximated uniformly by circular liftings as the next theorem implies.
\begin{theorem}\label{thm-approximation-of-circular-lifting}
Assume that $\tilde{\gamma}$ is a path in $\mathbb{O}$. Then for any $\delta>0$ there exists a path $\gamma$ in $\mathbb{C}$ and a path $\Theta$ in $\mathbb{S}$ such that $\tilde{\gamma}$ shares the same endpoints with $\gamma_\Theta$, and
$$\max_{[0,1]}\lvert\tilde{\gamma}-\gamma_\Theta\rvert <\delta.$$
\end{theorem}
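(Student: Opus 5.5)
The idea is to perturb $\tilde{\gamma}$ slightly so that it meets the real axis only at finitely many controlled moments, and then to build the spherical coordinate $\Theta$ piece by piece.

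\textbf{Step 1: reduce to a polygonal path.} By uniform continuity, replace $\tilde{\gamma}$ by a polygonal path $\tilde{\gamma}_1$ through the points $\tilde{\gamma}(t_k)$, where $0=t_0<\dots<t_N=1$ is a fine partition. Then $\max_{[0,1]}\lvert\tilde{\gamma}-\tilde{\gamma}_1\rvert<\delta/2$ and the endpoints are unchanged. Since $\mathbb{R}$ has codimension $7$ in $\mathbb{O}\cong\mathbb{R}^8$, we can move the interior vertices by less than $\delta/4$ so that no interior vertex lies on $\mathbb{R}$. We can also arrange that each segment either avoids $\mathbb{R}$ or meets it in at most one point. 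A segment lying entirely in $\mathbb{R}$ can only occur if its endpoints are real; if it joins the two real endpoints, we first insert a non-real midpoint. Only the first and last vertices $\tilde{\gamma}(0)$ and $\tilde{\gamma}(1)$ may be real.

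\textbf{Step 2: handle the real points.} Consider the finitely many parameters $s$ where the path hits $\mathbb{R}$. Each is an endpoint $s\in\{0,1\}$ or an isolated interior crossing. For an interior crossing at $s$, the path near $s$ looks like $r+(s'-s)w$ with $w$ a vector whose imaginary part is nonzero (the segment is not in $\mathbb{R}$). Its imaginary part $(s'-s)\Im(w)$ changes direction from $-\Im(w)/\lvert\Im(w)\rvert$ to $+\Im(w)/\lvert\Im(w)\rvert$ across $s$, so the naive $\Theta$ jumps to the antipode.

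To repair this, replace a tiny piece of the path on $[s-\eta,s+\eta]$ by a detour. First go from $\tilde{\gamma}_1(s-\eta)$ straight to the real point $r$; along this piece the imaginary unit is constant, equal to $I:=-\Im(w)/\lvert\Im(w)\rvert$. Then go from $r$ to $\tilde{\gamma}_1(s+\eta)$, with imaginary unit $-I$. On the base path in $\mathbb{C}$, use $\gamma(t)=\Re+\lvert\Im\rvert i$ on the first half. On the second half, use $\Re-\lvert\Im\rvert i$ with $\Theta\equiv I$, which is allowed because $\tau_I(\alpha-\beta i)=\alpha+\beta(-I)$. Hence $\Theta\equiv I$ is constant across the crossing and $\gamma$ passes through $r\in\mathbb{R}\subset\mathbb{C}$ continuously.

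The apparent sign issue is the main subtlety. After the crossing, $\gamma$ lies in the lower half-plane, and subsequent pieces must keep this convention. In general, choose $\gamma(t)=\Re(\tilde{\gamma}_1(t))+\epsilon(t)\lvert\Im(\tilde{\gamma}_1(t))\rvert i$ and $\Theta(t)=\epsilon(t)\Im(\tilde{\gamma}_1(t))/\lvert\Im(\tilde{\gamma}_1(t))\rvert$ off the real set. Here $\epsilon(t)\in\{\pm1\}$ is constant on each interval between crossings and flips at each crossing. Then $\tau_{\Theta}(\gamma)=\tilde{\gamma}_1$ identically. Both $\gamma$ and $\Theta$ are continuous away from crossings, $\Theta$ extends continuously through each crossing by the flip, and $\gamma$ is continuous there because its imaginary part tends to $0$. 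In fact, for the straight segment of Step 1 the detour is unnecessary: the sign flip alone works, because $\Im(\tilde{\gamma}_1)$ is a linear function of $s'$ times a fixed vector.

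At the endpoints $t=0$ or $1$, if $\tilde{\gamma}_1$ is real there, simply extend $\Theta$ constantly from the adjacent non-real segment. This is continuous because the imaginary direction is constant on that segment, and $\gamma$ is continuous there as well.

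\textbf{Step 3: conclude.} The lifting $\gamma_\Theta=\tilde{\gamma}_1$ shares the endpoints of $\tilde{\gamma}$ and satisfies $\max_{[0,1]}\lvert\tilde{\gamma}-\gamma_\Theta\rvert<\delta$.

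The main obstacle is Step 1's general-position perturbation: ensuring that each segment meets $\mathbb{R}$ transversally in isolated points, with a constant imaginary direction near each crossing. Straight segments handle this cleanly, since a segment not contained in $\mathbb{R}$ meets $\mathbb{R}$ in at most one point and has a constant imaginary direction up to sign.
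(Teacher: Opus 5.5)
Your proof is correct. It shares the first reduction with the paper but handles the real-axis crossings by a genuinely different device.

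\textbf{Common part.} Both arguments reduce to a polygonal path and move the interior vertices off $\mathbb{R}$.

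\textbf{The paper's route.} The paper then performs a second perturbation. It replaces each interior crossing point $r$ of a segment by $r+\frac{\delta}{4}I_k$, where $I_k$ is not parallel to that segment's imaginary direction. The resulting path meets $\mathbb{R}$ at most at its endpoints. One can then simply take $\Theta=\Im/\lvert\Im\rvert$ and $\gamma=\Re+\lvert\Im\rvert i$, so the base lies in the closed upper half-plane.

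\textbf{Your route.} You keep the transversal crossings and use the identity $\tau_I(\overline{z})=\tau_{-I}(z)$. Flipping the sign $\epsilon$ at each crossing makes $\Theta$ constant across it, while the base $\gamma$ passes affinely through the real axis into the other half-plane.

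\textbf{What each buys.} Your approach shows slightly more: the perturbed polygonal path is itself a circular lifting, with no second modification. It also makes explicit the conjugation symmetry that the paper only invokes later, when it replaces $\gamma,\Theta$ by $\overline{\gamma},-\Theta$. The paper's approach instead yields a normalized base with nonnegative imaginary part.

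\textbf{Minor points.}
\begin{enumerate}
\item The ``detour'' paragraph in your Step 2 is vacuous. On a straight segment, $\tilde{\gamma}_1(s-\eta)$, $r$ and $\tilde{\gamma}_1(s+\eta)$ are collinear, so the sign-flip construction alone is the proof. You could drop the detour.
\item In Step 3 you write $\gamma_\Theta=\tilde{\gamma}_1$, but this should refer to the perturbed polygonal path, not the original interpolation. Your error budget $\delta/2+\delta/4$ plus the small midpoint insertion still stays below $\delta$.
\item You explicitly handle the degenerate case of a single segment joining two real endpoints, which the paper leaves implicit.
\end{enumerate}
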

\begin{proof}
It suffices to prove this theorem in the case that $\tilde{\gamma}$ is a polygonal path in $\mathbb{O}$.  We may assume its expression is as follows:
\begin{equation}\label{eq-thm-cl-2-1}
\tilde{\gamma}(t)=\frac{t-t_{k-1}}{t_{k}-t_{k-1}}x_k+\frac{t_k-t}{t_{k}-t_{k-1}}x_{k-1},\quad t_{k-1}\leq t\leq t_k\ (k=1,\cdots,n),
\end{equation}
where $0=t_0<t_1<\cdots<t_n=1$ and $x_0,x_1\cdots,x_n\in\mathbb{O}$.
This proof is divided into three steps.

Step $1$: Push the vertices except the endpoints away from the real axis.
Set $x'_k:=x_k+\frac{\delta}{4}I$, where $I$ is an fixed imaginary octonion unit, if $x_k\in\mathbb{R}$ and $k\neq 0,n$; otherwise, set $x'_k:=x_k$.
Replacing the vertices $x_{k-1},x_k$ by $x'_{k-1},x'_k$ in \eqref{eq-thm-cl-2-1} gives rise to a new polygonal path which we denote by $\tilde{\gamma}'$. It is obvious that $\tilde{\gamma}$ shares the same endpoints with $\tilde{\gamma}'$, and
\begin{equation}\label{eq-thm-cl-2-2}
\max_{[0,1]}\lvert\tilde{\gamma}-\tilde{\gamma}'\rvert \leq\frac{\delta}{4}.
\end{equation}

Step $2$: Push the crossover points of the line segments $(x'_{k-1},x'_k)$ and the real axis away from the real axis. Now define the third polygonal path $\tilde{\gamma}''$ as follows.
For $k=1,\cdots,n$, if there is a $t^*_k\in(t_{k-1},t_k)$ such that $\tilde{\gamma}'(t^*_k)\in\mathbb{R}$, then $\tilde{\gamma}''|_{[t_{k-1},t_k]}$ is given by
$$
\tilde{\gamma}''(t):=\left\{\
\begin{array}{ll}
\frac{t-t_{k-1}}{t^*_{k}-t_{k-1}}x^*_k+\frac{t^*_k-t}{t^*_{k}-t_{k-1}}x_{k-1},& t_{k-1}\leq t\leq t^*_k,\\
&\\
\frac{t-t^*_{k}}{t_{k}-t^*_{k}}x_k+\frac{t_k-t}{t_{k}-t^*_{k}}x^*_k,& t^*_{k}\leq t\leq t_k,
\end{array}
\right.
$$
where $x^*_k:=\tilde{\gamma}'(t^*_k)+\frac{\delta}{4}I_k$ with $I_k$ being an imaginary octonion unit $\neq\pm\frac{\Im(x'_k)}{\lvert\Im(x'_k)\rvert}$;  otherwise, $\tilde{\gamma}''|_{[t_{k-1},t_k]}:=\tilde{\gamma}'|_{[t_{k-1},t_k]}$.
Evidently, $\tilde{\gamma}'$ shares the same endpoints with $\tilde{\gamma}''$, and
\begin{equation}\label{eq-thm-cl-2-3}
\max_{[0,1]}\lvert\tilde{\gamma}'-\tilde{\gamma}''\rvert \leq\frac{\delta}{4}.
\end{equation}

Step $3$: Verify that the polygonal path $\tilde{\gamma}''$ is a circular lifting. Based on the construction of the polygonal path $\tilde{\gamma}''$, one can easily see that the only possible crossover points of $\tilde{\gamma}''$ and $\mathbb{R}$ are the endpoints $x_0,x_n$. This fact ensures the well-definedness of the following spherical coordinate (which is a path in $\mathbb{S}$):
$$
\Theta(t):=\left\{
\begin{array}{ll}
\frac{\Im(\tilde{\gamma}''(t))}{\lvert\Im(\tilde{\gamma}''(t))\rvert},& 0< t<1,\\
&\\
\lim\limits_{s\to 0^+}\frac{\Im(\tilde{\gamma}''(s))}{\lvert\Im(\tilde{\gamma}''(s))\rvert},& t=0,\\
&\\
\lim\limits_{s\to 1^-}\frac{\Im(\tilde{\gamma}''(s))}{\lvert\Im(\tilde{\gamma}''(s))\rvert},& t=1.
\end{array}
\right.
$$
Define the base (which is a path in $\mathbb{C}$) paired with this spherical coordinate as
$$
\gamma(t):=\Re(\tilde{\gamma}''(t))+\lvert\Im(\tilde{\gamma}''(t))\rvert i, \quad 0\leq t\leq 1.
$$
Then a simple calculation yields $\tilde{\gamma}''=\gamma_\Theta$ on $(0,1)$, indicating
$\tilde{\gamma}''=\gamma_\Theta$ on $[0,1]$ since both sides are continuous.
Moreover, the desired inequality $\max\limits_{[0,1]}\lvert\tilde{\gamma}-\gamma_\Theta\rvert <\delta$ is now a direct consequence of \eqref{eq-thm-cl-2-2} and \eqref{eq-thm-cl-2-3}.
It completes the proof.
\end{proof}

It follows immediately that the connectedness of open sets in $\mathbb{O}$ can be described via circular liftings.
\begin{theorem}\label{thm-connectedness-via-CL}
A nonempty open set $O\subset\mathbb{O}$ is connected if and only if any two points in $O$ can be connected by a circular lifting in $O$, i.e., for any $x_0,x_1\in O$ there exists a circular lifting $\gamma_\Theta$ such that $\gamma_\Theta(k)=x_k$ $(k=0,1)$ and $\gamma_\Theta([0,1])\subset O$.
\end{theorem}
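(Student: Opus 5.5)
The sufficiency is immediate. Every circular lifting $\gamma_\Theta$ is a continuous map $[0,1]\to\mathbb{O}$, as noted right after Definition \ref{def-circular-lifting}. So if any two points of $O$ can be joined by a circular lifting inside $O$, then $O$ is path connected and hence connected. The real work is the necessity, and for it I would reduce to Theorem \ref{thm-approximation-of-circular-lifting} by a compactness argument.

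Assume $O$ is connected and fix $x_0,x_1\in O$. Since $O$ is a nonempty connected open subset of $\mathbb{O}\cong\mathbb{R}^8$, it is path connected. Hence there is a path $\tilde{\gamma}$ in $O$ with $\tilde{\gamma}(0)=x_0$ and $\tilde{\gamma}(1)=x_1$.

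The image $K:=\tilde{\gamma}([0,1])$ is compact and contained in the open set $O$. I would therefore pick
$$\delta:=\operatorname{dist}(K,\mathbb{O}\setminus O)>0,$$
with the convention $\delta=1$ if $O=\mathbb{O}$. Theorem \ref{thm-approximation-of-circular-lifting}, applied with this $\delta$, gives a path $\gamma$ in $\mathbb{C}$ and a path $\Theta$ in $\mathbb{S}$ such that $\gamma_\Theta$ has the same endpoints as $\tilde{\gamma}$ and $\max_{[0,1]}\lvert\tilde{\gamma}-\gamma_\Theta\rvert<\delta$. Then each point $\gamma_\Theta(t)$ lies within distance less than $\delta$ of $\tilde{\gamma}(t)\in K$, so it belongs to $O$. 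Thus $\gamma_\Theta([0,1])\subset O$ and $\gamma_\Theta(k)=x_k$ for $k=0,1$, which is exactly the statement.

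I do not expect a real obstacle here. The only point needing care is the choice of $\delta$ as the distance from the compact trace to the complement, which is what keeps the approximating lifting inside $O$. The substantive work, namely steering polygonal paths off the real axis while keeping the endpoints fixed, has already been done in Theorem \ref{thm-approximation-of-circular-lifting}.
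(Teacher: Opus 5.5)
Your proof is correct and follows the paper's argument: a connected open set in $\mathbb{O}$ is path connected, and Theorem \ref{thm-approximation-of-circular-lifting} then replaces any path in $O$ by a circular lifting with the same endpoints. Your choice $\delta=\operatorname{dist}(K,\mathbb{O}\setminus O)$ makes explicit why the approximating lifting stays inside $O$, which the paper leaves implicit.
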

\begin{proof}
Firstly, such $O$ is connected iff $O$ is path-connected, since it is locally path-connected (see, e.g., Theorem 25.5 in \cite{James-2000}). Secondly, Theorem \ref{thm-approximation-of-circular-lifting} implies that for any path in $O$ there exists a circular lifting in $O$ with the same endpoints. It is now obvious that the theorem holds.
\end{proof}

Based on its definition and the three theorems above, we can say that ``circular lifting" is a natural concept which fits in perfectly with the spatial structure and the topological structure of $\mathbb{O}$.

\subsection{CCL equivalence}
A certain type of pairs of circular liftings plays a vital role in characterizing the uniqueness of stem vectors.
\begin{definition}\label{def-CCL}
If two circular liftings have a common base and their spherical coordinates have a common initial point, then them are called coupled circular liftings.
\end{definition}
For example, the following two pathes in $\mathbb{O}$ can be considered as a pair of coupled circular liftings.
$\tilde{\gamma}_1(t):= t-tI$ and $\tilde{\gamma}_2(t):=t-t(I\cos t+J\sin t)$ with $I,J\in\mathbb{S}$ and $I\perp J$.
Indeed, taking $\gamma(t):=t-ti$, $\Theta_1(t):= I$ and $\Theta_2(t):=I\cos t+J\sin t$, one can easily see that $\tilde{\gamma}_k$ is the circular lifting of $\gamma$ via $\Theta_k$ $(k=1,2)$ and  the spherical coordinates $\Theta_1,\Theta_2$ share the same initial point $I$, which makes $\tilde{\gamma}_1,\tilde{\gamma}_2$ a pair of coupled circular liftings.

Now we will introduce a binary relation by means of coupled circular liftings.
\begin{definition}\label{def-CCL-connectedness}
Let $\Omega$ be a domain in $\mathbb{O}$. Two points $x_1,x_2\in\Omega$ is said to be CCL-connected in $\Omega$, denoted as $x_1 \overset{\Omega}{\sim} x_2$, if they are the terminal points of two coupled circular liftings in $\Omega$, i.e., there exist coupled circular liftings $\gamma_{\Theta_1}, \gamma_{\Theta_2}$ such that $\gamma_{\Theta_k}([0,1])\subset \Omega$ and $\gamma_{\Theta_k}(1)=x_k$ $(k=1,2)$.
\end{definition}
It is evident that $\overset{\Omega}{\sim}$, as a binary relation on $\Omega$, is reflexive and symmetric.
But this relation is not necessarily transitive. Indeed, whether $\overset{\Omega}{\sim}$ is transitive depends on the geometric features of $\Omega$. For example, if $\Omega$ is star-shaped with respect to some point in the real axis, then $\overset{\Omega}{\sim}$ is transitive.

This reflexive and symmetric relation naturally induces an equivalence relation as follows.
\begin{definition}\label{def-CCL-equivalence}
Let $\Omega$ be a domain in $\mathbb{O}$.
Two points $x,x'\in\Omega$ is said to be CCL-equivalent in $\Omega$, denoted as $x \overset{\Omega}{\simeq} x'$, if there exists a sequence $x_0,x_1,\cdots,x_n$ $(n\in\mathbb{N})$ such that $x_0=x$, $x_n=x'$ and $x_{k-1}\overset{\Omega}{\sim}x_k$ $(k=1,\cdots,n)$.
\end{definition}
Intuitively speaking, $\overset{\Omega}{\simeq}$ can be considered as some sort of completion of $\overset{\Omega}{\sim}$, because $\overset{\Omega}{\simeq}$ is the strongest one among all the equivalence  relations which are weaker than $\overset{\Omega}{\sim}$.

\subsection{Bers-Vekua continuation}
Inspired by ``analytic continuation" (see, e.g., Definition 7.1 in \cite{Forster-1981}) in complex analysis, we intend to introduce an analogous concept named as ``Bers-Vekua continuation", which is one of our main tools to investigate the uniqueness of stem vectors.

For convenience, some new notations are given as follows.
Let $O$ be a nonempty open subset of $\mathbb{C}$.
We denote the family of real analytic mappings from $O$ to $\mathbb{O}$ by $\mathcal{A}(O,\mathbb{O})$ (here $O$ and $\mathbb{O}$ are considered as real analytic manifolds),
and denote the family of all pairs $(u,v)$ with $u,v\in\mathcal{A}(O,\mathbb{O})$ that satisfy the Bers-Vekua system on $O$ by $\mathcal{BV}(O)$, or equivalently,
$$
\mathcal{BV}(O):=\{(u,v)\in\mathcal{A}(O,\mathbb{O})\times\mathcal{A}(O,\mathbb{O})|\ (u,v)\text{ satisfies  \eqref{eq-Vekua}} \}.
$$
Let $z$ be a complex number.
On the union $$\cup_{O\ni z}\mathcal{BV}(O)$$ where $O$ runs over all open neighborhoods of $z$, an equivalence relation is defined in the following way: Two elements $(u,v)$ and $(u',v')$ in this union are related, written as $(u,v)\overset{z}{\sim}(u',v')$, precisely if there is an open neighborhood $O$ of $z$ such that $u|_O=u'|_O$ and $v|_O=v'|_O$.
One can easily see that $\overset{z}{\sim}$ is an equivalence relation.
We denote the equivalence class of an element $(u,v)$ in this union by $[(u,v)]_z$, and the set of all the equivalence classes by $\mathcal{BV}_z$. In addition, we call $[(u,v)]_z$ the Bers-Vekua germ of $(u,v)$ at the point $z$, and $\mathcal{BV}_z$ the Bers-Vekua stalk at the point $z$.

Now  we are ready to give an explicit definition of Bers-Vekua continuation.
\begin{definition}\label{def-Bers-Vekua-continuation}
Let $\gamma$ be a path in $\mathbb{C}$. A Bers-Vekua continuation along the path $\gamma$ is a family of Bers-Vekua germs $\phi_t$ $(0\leq t\leq 1)$  satisfying the following condition:
For any $t^*\in[0,1]$, there exist a neighborhood $T\subset [0,1]$ of $t^*$, an open set $O\subset\mathbb{C}$ with $\gamma(T)\subset O$ and a pair $(u,v)\in \mathcal{BV}(O)$ such that
      $$\phi_t=[(u,v)]_{\gamma(t)}\quad\text{for every }t\in T.
      $$
\end{definition}

\begin{remark}\label{rem-Bers-Vekua-continuation}
The condition above implies that $\phi_t$ is a Bers-Vekua germ at the point $\gamma(t)$ for every $t\in[0,1]$. Furthermore, due to the compactness of the interval $[0,1]$, the condition above can be rewritten equivalently as:
There exist a partition $0=t_0<t_0<\cdots<t_n=1$ and open sets $O_k\subset \mathbb{O}$ with $\gamma([t_{k-1},t_k])\subset O_k$ and pairs $(u_k,v_k)\in\mathcal{BV}(O_k)$ $(k=1,\cdots,n)$ such that
$$\phi_t=[(u_k,v_k)]_{\gamma(t)}\quad\text{for every }t\in [t_{k-1},t_k].
      $$
\end{remark}

Not surprisingly, the terminal value of a Bers-Vekua continuation along any given path is decided by its initial value, as the theorem below shows.
\begin{theorem}\label{thm-uniqueness-Bers-Vekua-continuation}
If two Bers-Vekua continuations $\phi_t, \psi_t$ $(0\leq t\leq 1)$ along a path $\gamma$ in $\mathbb{C}$ share the same initial value, i.e., $\phi_0=\psi_0$, then they are identically equal, i.e., $\phi_t= \psi_t$ $(0\leq t\leq 1)$.
\end{theorem}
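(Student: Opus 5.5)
We follow the classical argument for uniqueness of analytic continuation (cf. Forster's Theorem 7.? in \cite{Forster-1981}), with real analyticity replacing holomorphy. Set
\[
T:=\{t\in[0,1]\mid \phi_t=\psi_t\}.
\]
Then $0\in T$ by hypothesis. Since $[0,1]$ is connected, it suffices to show that $T$ is both open and closed in $[0,1]$.

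\textbf{Openness.} Fix $t^*\in T$. By Definition \ref{def-Bers-Vekua-continuation} we obtain neighborhoods $T_1,T_2$ of $t^*$, open sets $O_1,O_2\supset\gamma(T_k)$, and pairs $(u_k,v_k)\in\mathcal{BV}(O_k)$ with $\phi_t=[(u_1,v_1)]_{\gamma(t)}$ on $T_1$ and $\psi_t=[(u_2,v_2)]_{\gamma(t)}$ on $T_2$. Because $\phi_{t^*}=\psi_{t^*}$, the pairs $(u_1,v_1)$ and $(u_2,v_2)$ agree on some open neighborhood $W$ of $\gamma(t^*)$. By continuity of $\gamma$, there is a neighborhood $T_3\subset T_1\cap T_2$ of $t^*$ with $\gamma(T_3)\subset W$. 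For every $t\in T_3$, the two pairs then agree on the neighborhood $W$ of $\gamma(t)$, so $\phi_t=\psi_t$. Hence $T_3\subset T$. This step uses nothing beyond the definitions.

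\textbf{Closedness.} Let $t^*$ lie in the closure of $T$, and take $T_1,T_2,O_1,O_2,(u_k,v_k)$ as above. Let $D$ be the connected component of $O_1\cap O_2$ containing $\gamma(t^*)$; more concretely, choose a small open disc $D$ centered at $\gamma(t^*)$ with $D\subset O_1\cap O_2$, and a connected neighborhood $T_3\subset T_1\cap T_2$ of $t^*$ with $\gamma(T_3)\subset D$. Since $t^*\in\overline{T}$, some $s\in T_3\cap T$ exists. At such $s$ we have $[(u_1,v_1)]_{\gamma(s)}=[(u_2,v_2)]_{\gamma(s)}$, so $u_1-u_2$ and $v_1-v_2$ vanish on an open subset of the disc $D$. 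Both differences are real analytic on the connected set $D$ (each component $\mathbb{R}^2\to\mathbb{R}$ is real analytic). By the identity theorem for real analytic functions, they vanish identically on $D$. Consequently $\phi_t=\psi_t$ for every $t\in T_3$, and in particular $t^*\in T$.

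The only nontrivial ingredient is the identity theorem for real analytic functions on a connected open set in $\mathbb{R}^2$. We will cite this from \cite{Krantz-2002}, applying it componentwise to the eight real coordinates of $u_1-u_2$ and $v_1-v_2$. The Bers-Vekua system is not used at all, only real analyticity. The point where care is needed is ensuring that the neighborhood used is connected, so that vanishing near $\gamma(s)$ propagates to $\gamma(t^*)$; this is why we take a disc around $\gamma(t^*)$ rather than an arbitrary intersection $O_1\cap O_2$. Connectedness of $[0,1]$ then gives $T=[0,1]$.
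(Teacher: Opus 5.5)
Your proposal is correct and follows essentially the same route as the paper: an open--closed argument on $[0,1]$ for the set of parameters where the two germs coincide, with the identity theorem for real analytic functions (and no use of the Bers--Vekua system itself) giving closedness. The differences are minor: you argue locally with a disc around $\gamma(t^*)$ instead of the paper's global partition and convergent sequence $t^*_n\to t'$, and your openness step explicitly reconciles the two representing pairs $(u_1,v_1)$, $(u_2,v_2)$, a point the paper glosses over by asserting a single pair $(u,v)$ that represents both continuations near $t^*$.
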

\begin{proof}
Consider the following subset of the interval $[0,1]$.
\begin{equation}\label{eq-proof-uniqueness-Bers-Vekua-continuation-1}
  T^*:=\{t^*\in[0,1]|\ \phi_{t^*}=\psi_{t^*}\}.
\end{equation}
Firstly, the given condition $\phi_0=\psi_0$ implies $T^*$ is not empty.
Secondly, Definition \ref{def-Bers-Vekua-continuation} indicates that for every point $t^*\in T^*$, there is an open neighborhood $T\subset[0,1]$ of $t^*$, an open set $O\subset\mathbb{C}$ with $\gamma(T)\subset O$ and a pair $(u,v)\in \mathcal{BV}(O)$ such that
      $$\phi_t=[(u,v)]_{\gamma(t)}=\psi_t\quad\text{for every }t\in T,
      $$
which means $T^*$ is an open subset of $[0,1]$.
It only remains to show $T^*$ is also a closed subset of $[0,1]$.

The final part of this proof is rather complicated. Some basic assumptions and observations are as follows:
Now we assume that $t'$ is an accumulation point of $T^*$. Thus there is a sequence of points $t^*_n\in T^*\setminus\{t'\}$ $(n=1,2,\cdots)$ such that $\lim\limits_{n\to+\infty}t^*_n=t'$.
In addition, according to Remark \ref{rem-Bers-Vekua-continuation}, one can easily see that there exist a partition $0=t_0<t_0<\cdots<t_n=1$ of the interval $[0,1]$ and open sets $O_k\subset \mathbb{O}$ with $\gamma([t_{k-1},t_k])\subset O_k$ and pairs $(u_k,v_k), (u'_k,v'_k)\in\mathcal{BV}(O_k)$ $(k=1,\cdots,n)$ such that
\begin{equation}\label{eq-proof-thm-uniqueness-Bers-Vekua-continuation-2}
  \phi_t=[(u_k,v_k)]_{\gamma(t)},\quad\psi_t=[(u'_k,v'_k)]_{\gamma(t)}\quad\text{for every }t\in [t_{k-1},t_k].
\end{equation}
We notice that there is a (not necessarily unique) sub-interval $[t_{k'-1},t_{k'}]$  which contains infinitely many of $t^*_n$.
Now we focus on such sub-interval; and without loss of generality we may assume it contains all $t^*_n$ $(n=1,2,\cdots)$.

The main steps of the final part are as follows:
Firstly, the assumption that $t^*_n\in[t_{k'-1},t_{k'}]$ converges to $t'$ indicates $t'\in[t_{k'-1},t_{k'}]$.
Thus \eqref{eq-proof-thm-uniqueness-Bers-Vekua-continuation-2}  with $t=t', k=k'$ gives
\begin{equation}\label{eq-proof-uniqueness-Bers-Vekua-continuation-4}
\phi_{t'}=[(u_k,v_k)]_{\gamma(t')},\quad\psi_{t'}=[(u'_k,v'_k)]_{\gamma(t')}.
\end{equation}
Secondly, \eqref{eq-proof-uniqueness-Bers-Vekua-continuation-1} indicates $\phi_{t^*_n}=\psi_{t^*_n}$ since $t^*_n\in T^*$, and \eqref{eq-proof-thm-uniqueness-Bers-Vekua-continuation-2} with $t=t^*_n, k=k'$  gives
$$\phi_{t^*_n}=[(u_{k'},v_{k'})]_{\gamma(t^*_n)},\quad\psi_{t^*_n}=[(u'_{k'},v'_{k'})]_{\gamma(t^*_n)}. $$
We thus have
\begin{equation*}
[(u_{k'},v_{k'})]_{\gamma(t^*_n)}=[(u'_{k'},v'_{k'})]_{\gamma(t^*_n)},
\end{equation*}
which yields $u_{k'}=u'_{k'}$,  $v_{k'}=v'_{k'}$ in the connected component of the open set $O_{k'}$ containing $\gamma(t^*_n)$, due to the identity theorem for real analytic functions (see, e.g., Lemma 5.22 of \cite{Kuchment-2016}).
Furthermore, we notice that $\gamma(t')$ and $\gamma(t^*_n)$ belong to the same connected component of $O_{k'}$ since $\gamma([t_{k'-1},t_k'])\subset O_{k'}$ and $t',t^*_n\in [t_{k'-1},t_{k'}]$.
It follows immediately that $u_{k'}=u'_{k'}$, $v_{k'}=v'_{k'}$ in some open neighborhood of $\gamma(t')$, or equivalently,
$$
[(u_k,v_k)]_{\gamma(t')}=[(u'_k,v'_k)]_{\gamma(t')}.
$$
Substituting the equality above into \eqref{eq-proof-uniqueness-Bers-Vekua-continuation-4}, we obtain $\phi_{t'}=\psi_{t'}$, meaning $t'\in T^*$. In conclusion, $T^*$ is a closed subset of $[0,1]$, which completes the proof.
\end{proof}

\subsection{Uniqueness of stem vectors on CCL-equivalence classes}
In the previous context, we have introduced the concepts of CCL-equivalence and Bers-Vekua continuation.
They serve as our main tools to investigate the uniqueness of stem vectors.
Recall that for an open ball $B$ in the definition domain of any given slice function $f$, we let the symbol $(u_B,v_B)$ represent the corresponding local stem function.
Moreover, combining theorems \ref{thm-real-analytic-of-stem-function} and \ref{thm-equiv-slice-Fueter-and-Vekua}, one can easily see that if $f$ is slice Fueter-regular,
then $(u_B,v_B)$ is real analytic and satisfies \eqref{eq-Vekua}, i.e., $(u_B,v_B)\in\mathcal{BV}(O)$ with $O:=\cup_{I\in\mathbb{S}}B^I$. This observation ensures the well-definedness of the following concept.
\begin{definition}\label{def-I-germ}
Let $f:\Omega\to\mathbb{O}$ be a slice Fueter-regular function. For any $z\in\mathbb{C}$ and $I\in\mathbb{S}$ with $z_I\in\Omega$, the Bers-Vekua germ
$$
[(u_B,v_B)]_z
$$
with $B$ being an arbitrary open ball in $\Omega$ containing $z_I$
is called the $I$-germ of $f$ at the point $z$, denoted by $\phi_f(z,I)$.
\end{definition}
Recall that for any domain $\Omega\subset\mathbb{O}$ and any $I\in\mathbb{S}$,  $\Omega^I$ denotes the set $\{z\in\mathbb{C}|\ z_I:=\tau_I(z)\in\Omega\}$.
The following theorem makes a connection between slice Fueter-regular functions and Bers-Vekua continuations.
\begin{theorem}\label{thm-con-between-SFR-function-&-BV-continuation}
Assume $f:\Omega\to\mathbb{O}$ is a slice Fueter-regular function, and $\gamma_\Theta$ is a circular lifting in $\Omega$, i.e., $\gamma_\Theta([0,1])\subset\Omega$. Then
$$
\phi_t:=\phi_f(\gamma(t),\Theta(t))\quad (0\leq t\leq 1)
$$
is a Bers-Vekua continuation along the path $\gamma$.
\end{theorem}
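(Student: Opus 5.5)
We have to verify the local condition of Definition \ref{def-Bers-Vekua-continuation} at every $t^*\in[0,1]$. First we note that the $I$-germ in Definition \ref{def-I-germ} is well defined. If $B_1,B_2$ are two open balls in $\Omega$ containing $z_I$, take a smaller ball $B\subset B_1\cap B_2$ centred at $z_I$. Since the local stem function of $f|_{B_k}$ restricted to $B$ is the stem function of $f|_B$ (stem vectors are determined by values of $f$ on connected pieces of spheres, via Lemma \ref{lemma-representation-formula}), the pairs $(u_{B_1},v_{B_1})$ and $(u_{B_2},v_{B_2})$ both agree with $(u_B,v_B)$ on the open set $\cup_{J}B^J\ni z$. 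Hence they have the same germ at $z$. I take this as implicit in the definition and use it freely.

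Fix $t^*$ and set $x^*:=\gamma_\Theta(t^*)\in\Omega$. Choose an open ball $B\subset\Omega$ centred at $x^*$. By continuity of $\gamma_\Theta$ there is a relatively open neighbourhood $T\subset[0,1]$ of $t^*$ with $\gamma_\Theta(T)\subset B$, that is, $\gamma(t)_{\Theta(t)}\in B$ for all $t\in T$. Put $O:=\cup_{I\in\mathbb{S}}B^I$, which is open in $\mathbb{C}$ because each $B^I$ is open and $O$ is a union of them. Then $\gamma(t)\in B^{\Theta(t)}\subset O$ for every $t\in T$, so $\gamma(T)\subset O$. By Theorems \ref{thm-equiv-slice-Fueter-and-Vekua} and \ref{thm-real-analytic-of-stem-function}, $(u_B,v_B)\in\mathcal{BV}(O)$.

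Finally, for each $t\in T$ the ball $B$ is an open ball in $\Omega$ containing $\gamma(t)_{\Theta(t)}$. So by Definition \ref{def-I-germ}, together with the independence of the ball noted above,
$$\phi_t=\phi_f(\gamma(t),\Theta(t))=[(u_B,v_B)]_{\gamma(t)}\quad\text{for all }t\in T.$$
This is exactly the condition required in Definition \ref{def-Bers-Vekua-continuation}, and the theorem follows.

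The main obstacle is the well-definedness step, namely that the germ does not depend on the choice of the ball. The delicate point is that $B_1\cap B_2$ is convex, hence circularly connected, so the stem vectors of $f|_{B_1}$ and $f|_{B_2}$ at points of $B_1\cap B_2$ coincide. To show this I would use that for a point $y\in B\subset B_k$, the connected component of $\mathbb{S}(B,a,b)$ containing $\Im(y)/\lvert\Im(y)\rvert$ lies inside the corresponding component of $\mathbb{S}(B_k,a,b)$. By uniqueness of the solution of \eqref{eq-slice-function} on that smaller component, the stem vectors agree, and therefore the local stem functions agree on $\cup_J B^J$.
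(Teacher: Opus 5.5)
Your proof is correct and follows the paper's argument: the paper also takes $T=(\gamma_\Theta)^{-1}B$ and $O=\cup_{I\in\mathbb{S}}B^I$, and it settles independence of the ball inline. It shows $(u_B,v_B)\equiv(u_{B'},v_{B'})$ on $\cup_{I\in\mathbb{S}}(B\cap B')^I$, since both equal $(u_{z_I},\pm v_{z_I})$ built from the stem vectors of $f$ itself, which is what your sub-ball comparison does. One caution: your aside that $B_1\cap B_2$ is circularly connected \emph{because} it is convex is not valid reasoning, since a thin convex tube crossing a sphere $a+b\mathbb{S}$ twice is a counterexample; your argument never uses it, because the component-inclusion reasoning for the sub-ball $B$ already suffices.
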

\begin{proof}
For any $t^*\in[0,1]$, Definition \ref{def-I-germ} gives
$$\phi_{t^*}=\phi_f(\gamma(t^*),\Theta(t^*))=[(u_B,v_B)]_{\gamma(t^*)}$$
where $B$ is an arbitrary open ball in $\Omega$ containing $\gamma_\Theta(t^*)$.
Consider the neighborhood $T:=(\gamma_\Theta)^{-1}B$ of $t^*$ and the open set $O:=\cup_{I\in\mathbb{S}}B^I$ in $\mathbb{C}$.
Firstly, $O$ includes $\gamma(T)$ by definition.
Secondly, as mentioned earlier, the local stem function $(u_B,v_B)$ belongs to $\mathcal{BV}(O)$.
Due to Definition \ref{def-Bers-Vekua-continuation}, it only remains to prove
\begin{equation}\label{eq-thm-con-between-SFR-function-&-BV-continuation-1}
\phi_t=[(u_B,v_B)]_{\gamma(t)} \quad\text{for every }t\in T.
\end{equation}

For every $t\in T$,
Definition \ref{def-I-germ} also gives
\begin{equation}\label{eq-thm-con-between-SFR-function-&-BV-continuation-2}
\phi_{t}=\phi_f(\gamma(t),\Theta(t))=[(u_{B'},v_{B'})]_{\gamma(t)}
\end{equation}
where $B'$ is an arbitrary open ball in $\Omega$ containing $\gamma_\Theta(t)$.
Furthermore, combining Definitions \ref{def-stem-function}, \ref{def-global-stem-function} and \ref{def-local-stem-function}, we obtain
$$
(u_B(z),v_B(z))=(u_{z_I},\rho(z) v_{z_I})
$$
for any $z\in\mathbb{C}$ and $I\in\mathbb{S}$ with $z_I\in B$, where $(u_{z_I}, v_{z_I})$ is the stem vector of $f$ at the point $z_I$, and $\rho$ is a function defined in the following way:  $\rho(z):=+1$ if the imaginary part of $z$ is positive, $\rho(z):=-1$ otherwise.
Similarly, we have
$$
(u_{B'}(z),v_{B'}(z))=(u_{z_I},\rho(z) v_{z_I})
$$
for any $z\in\mathbb{C}$ and $I\in\mathbb{S}$ with $z_I\in B'$.
The preceding two equalities leads to the conclusion that if $z_I\in B\cap B'$, then we have
$$
(u_B(z),v_B(z))=(u_{B'}(z),v_{B'}(z)),
$$
or equivalently,
\begin{equation}\label{eq-thm-con-between-SFR-function-&-BV-continuation-3}
(u_B,v_B)\equiv(u_{B'},v_{B'})\quad\text{on }O',
\end{equation}
where $O':=\cup_{I\in\mathbb{S}}(B\cap B')^I$. Now we claim that the open set $O'$ contains $\gamma(t)$, which in light of \eqref{eq-thm-con-between-SFR-function-&-BV-continuation-3} implies
$$
[(u_B,v_B)]_{\gamma(t)}=[(u_{B'},v_{B'})]_{\gamma(t)}
$$
Substituting the equality above into \eqref{eq-thm-con-between-SFR-function-&-BV-continuation-2}, we finally obtain \eqref{eq-thm-con-between-SFR-function-&-BV-continuation-1}.

The last part of this proof is devoted to verifying what we claimed. For convenience, we may consider $t\in T$ as a constant for now, and take $z=\gamma(t)$ and $I=\Theta(t)$.
Firstly, Definitions \ref{def-circular-lifting} gives $\gamma_\Theta(t)=z_I$, indicating $z_I\in B'$ since $\gamma_\Theta(t)\in B'$.
Moreover, the definition of the set $T$ says $\gamma_\Theta(t)\in B$, meaning $z_I\in B$.
Therefore, $z_I\in B\cap B'$, which leads to the desired conclusion $\gamma(t)\in O'$.
It completes the proof.
\end{proof}

Now we are in a position to characterize the conditional uniqueness of stem vectors of slice Fueter-regular functions in terms of the CCL-equivalence relation.
\begin{theorem}\label{thm-conditional-uniqueness-stem-vectors}
Assume $f:\Omega\to\mathbb{O}$ is a slice Fueter-regular function. Then for any $x, x'\in\Omega$ with $x\overset{\Omega}{\simeq}x'$, the stem vectors of $f$ at $x,x'$ are identical.
\end{theorem}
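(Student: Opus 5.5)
It suffices to treat the case $x\overset{\Omega}{\sim}x'$, because $\overset{\Omega}{\simeq}$ is generated by $\overset{\Omega}{\sim}$ through finite chains and equality of stem vectors is transitive. So assume there are coupled circular liftings $\gamma_{\Theta_1},\gamma_{\Theta_2}$ in $\Omega$ with a common base $\gamma$, with $\Theta_1(0)=\Theta_2(0)$, and with $\gamma_{\Theta_1}(1)=x$ and $\gamma_{\Theta_2}(1)=x'$.

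By Theorem \ref{thm-con-between-SFR-function-&-BV-continuation}, the families $\phi_t:=\phi_f(\gamma(t),\Theta_1(t))$ and $\psi_t:=\phi_f(\gamma(t),\Theta_2(t))$ are Bers-Vekua continuations along the same path $\gamma$. At $t=0$ the two liftings pass through the same point $\gamma(0)_{\Theta_1(0)}=\gamma(0)_{\Theta_2(0)}$ of $\Omega$. By Definition \ref{def-I-germ} we may use the same ball $B$ around that point for both germs, so $\phi_0=\psi_0$. Theorem \ref{thm-uniqueness-Bers-Vekua-continuation} then gives $\phi_1=\psi_1$. Writing $z:=\gamma(1)$, this says there are balls $B\ni z_{I}$ and $B'\ni z_{I'}$, where $I=\Theta_1(1)$ and $I'=\Theta_2(1)$, such that $(u_B,v_B)=(u_{B'},v_{B'})$ on a neighborhood of $z$. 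In particular the two pairs agree at $z$ itself.

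Finally, I translate the equality of the stem functions at $z$ into equality of the stem vectors. As in the proof of Theorem \ref{thm-con-between-SFR-function-&-BV-continuation}, we have $(u_B(z),v_B(z))=(u_{z_I},\rho(z)v_{z_I})$ and $(u_{B'}(z),v_{B'}(z))=(u_{z_{I'}},\rho(z)v_{z_{I'}})$. Since $z_I=x$ and $z_{I'}=x'$, and the sign $\rho(z)$ is the same in both, we get $(u_x,v_x)=(u_{x'},v_{x'})$.

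The one delicate point is the real case. If $z\in\mathbb{R}$ then $x=x'=z$ and there is nothing to prove. The convention $(u_x,v_x)=(f(x),0)$ for real points is also consistent here, because $v_B$ is odd in $\beta$ and so vanishes on the real axis. I therefore expect no serious obstacle. The main care needed is to check that the initial germs genuinely coincide, that is, that Definition \ref{def-I-germ} is independent of the chosen ball. This independence is exactly what was shown inside the proof of Theorem \ref{thm-con-between-SFR-function-&-BV-continuation}.
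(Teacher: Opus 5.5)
Your proposal is correct and follows essentially the same route as the paper: reduce to CCL-connected points, view $t\mapsto\phi_f(\gamma(t),\Theta_k(t))$ $(k=1,2)$ as two Bers-Vekua continuations along the common base with the same initial germ (because $\Theta_1(0)=\Theta_2(0)$), apply Theorem~\ref{thm-uniqueness-Bers-Vekua-continuation} to get equal germs at $\gamma(1)$, and then read off $(u_x,v_x)=(u_{x'},v_{x'})$ from $(u_B(z),v_B(z))=(u_{z_I},\rho(z)v_{z_I})$. Your remarks on the real case and on ball-independence of the $I$-germ only spell out details the paper leaves implicit.
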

In other words, the stem vector of a slice Fueter-regular function on every CCL-equivalence class is certainly unique.
\begin{proof}
Based on the construction of local stem function (see Definitions \ref{def-stem-function}, \ref{def-global-stem-function} and \ref{def-local-stem-function}), it suffices to show that there exist
some $z\in\mathbb{C}$ and $I,I'\in\mathbb{S}$ with $z_I=x, z_{I'}=x'$ such that
\begin{equation}\label{eq-proof-thm-conditional-uniqueness-stem-vectors-1}
(u_B(z),v_B(z))=(u_{B'}(z),v_{B'}(z)),
\end{equation}
where $B$ and $B'$ are two arbitrary open balls in $\Omega$ containing $x$ and $x'$ respectively.

Thanks to Definition \ref{def-CCL-equivalence}, without loss of generality, we may replace the condition $x\overset{\Omega}{\simeq}x'$ by the stronger one $x\overset{\Omega}{\sim}x'$, namely $x$ and $x'$ are CCL-connected in $\Omega$. Then there is a pair of coupled circular liftings $\gamma_{\Theta}, \gamma_{\Theta'}$ such that $\gamma_{\Theta}([0,1])$ and $\gamma_{\Theta'}([0,1])$ are included by $\Omega$
and $\gamma_{\Theta}(1)=x,\gamma_{\Theta'}(1)=x'$
due to Definition \ref{def-CCL-connectedness}.
Applying Theorem \ref{thm-con-between-SFR-function-&-BV-continuation} to the circular liftings $\gamma_{\Theta}$ and $\gamma_{\Theta'}$, we see
$$
\phi_{t}:=\phi_f(\gamma(t),\Theta(t))\quad (0\leq t\leq 1)
$$
and
$$
\phi'_{t}:=\phi_f(\gamma(t),\Theta'(t))\quad (0\leq t\leq 1)
$$
are both Bers-Vekua continuations along the path $\gamma$. Now we claim $\phi_{0}=\phi'_{0}$.
It follows immediately from Theorem \ref{thm-uniqueness-Bers-Vekua-continuation} that $\phi_{1}=\phi'_{1}$, which means
\begin{equation}\label{eq-proof-thm-conditional-uniqueness-stem-vectors-2}
[(u_B,v_B)]_{\gamma(1)}=\phi_f(\gamma(1),\Theta(1))=\phi_f(\gamma(1),\Theta'(1))=[(u_{B'},v_{B'})]_{\gamma(1)}
\end{equation}
where $B$ and $B'$ are two arbitrary open balls in $\Omega$ containing $x$ and $x'$ respectively due to  Definition \ref{def-I-germ}.
Taking $z:=\gamma(1)$, $I:=\Theta(1)$ and $I':=\Theta'(1)$,  one can easily see $z_I=x$ and $z_{I'}=x'$.
Finally, \eqref{eq-proof-thm-conditional-uniqueness-stem-vectors-2} yields
$$
(u_B(z),v_B(z))=(u_{B'}(z),v_{B'}(z)),
$$
which is exactly the desired equality \eqref{eq-proof-thm-conditional-uniqueness-stem-vectors-1}.

It only remains to verify our claim. According to the definition of coupled circular liftings (see Definition \ref{def-CCL}), we know that the spherical coordinates $\Theta$ and $\Theta'$ have the same initial value, i.e., $\Theta(0)=\Theta'(0)$.
Hence,
$$\phi_f(\gamma(0),\Theta(0))=\phi_f(\gamma(0),\Theta'(0)),$$
or equivalently, $\phi_{0}=\phi'_{0}$. It completes the proof.
\end{proof}

The theorem above ensures the existence of the global stem functions of slice Fueter-regular functions defined on domains of a special type.
\begin{definition}
A domain $\Omega\subset\mathbb{O}$ is said to be CCL-symmetric if any two points $x, x'\in\Omega$ with $\Re(x)=\Re(x')$ and $\lvert\Im(x)\rvert=\lvert\Im(x')\rvert$ are CCL-equivalent.
\end{definition}
\begin{corollary}\label{cor-complete-on-CCL-symmetric-domain}
Every slice Fueter-regular function defined on a CCL-symmetric domain is a complete slice function.
\end{corollary}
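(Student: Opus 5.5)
The plan is to check the defining condition of a complete slice function directly: $(u^{I_1},v^{I_1})\equiv(u^{I_2},v^{I_2})$ on $\Omega^{I_1}\cap\Omega^{I_2}$ for all $I_1,I_2\in\mathbb{S}$. Fix such $I_1,I_2$ and a point $z=\alpha+\beta i\in\Omega^{I_1}\cap\Omega^{I_2}$. Put $x:=z_{I_1}$ and $x':=z_{I_2}$. Both lie in $\Omega$, and they satisfy $\Re(x)=\Re(x')=\alpha$ and $\lvert\Im(x)\rvert=\lvert\Im(x')\rvert=\lvert\beta\rvert$.

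Since $\Omega$ is CCL-symmetric, we get $x\overset{\Omega}{\simeq}x'$. Theorem \ref{thm-conditional-uniqueness-stem-vectors} then gives $(u_x,v_x)=(u_{x'},v_{x'})$.

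It remains to convert this equality of stem vectors into equality of the slice stem functions. By Definition \ref{def-stem-function},
$$
(u^{I_k}(z),v^{I_k}(z))=(u_{z_{I_k}},\pm v_{z_{I_k}}),\qquad k=1,2,
$$
where the sign is chosen according to whether $z\in\Omega^{I_k}_{+}$ or $z\in\Omega^{I_k}_{-}$. The sign depends only on the sign of $\beta$, not on $I_k$, so it is the same for $k=1$ and $k=2$. Hence $(u^{I_1}(z),v^{I_1}(z))=(u^{I_2}(z),v^{I_2}(z))$.

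The real case $\beta=0$ is trivial: there $z_{I_1}=z_{I_2}=\alpha$, and both stem vectors equal $(f(\alpha),0)$. Since $z$ was arbitrary, $f$ is a complete slice function.

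The only step needing care is this sign convention. One must confirm that for $\beta<0$ the stem vector at $z_{I_k}$ is defined through the decomposition $\alpha+\lvert\beta\rvert(-I_k)$. The flip $v\mapsto -v$ in Definition \ref{def-stem-function} is applied uniformly for every $I_k$, so it cancels in the comparison.

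One might also worry that Theorem \ref{thm-conditional-uniqueness-stem-vectors} compares stem vectors at points written with possibly different spherical parts. However, stem vectors are intrinsic to the points $x$ and $x'$, and the theorem asserts exactly their equality. So no further argument is needed.
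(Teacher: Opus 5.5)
Your proposal is correct and follows the paper's argument. CCL-symmetry gives $z_{I_1}\overset{\Omega}{\simeq}z_{I_2}$, Theorem~\ref{thm-conditional-uniqueness-stem-vectors} equates the stem vectors, and Definition~\ref{def-stem-function} then yields $(u^{I_1},v^{I_1})=(u^{I_2},v^{I_2})$ on $\Omega^{I_1}\cap\Omega^{I_2}$, with your check that the sign $\pm$ depends only on the sign of $\beta$ filling in a detail the paper leaves implicit.
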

\begin{proof}
Assume that $\Omega$ is a CCL-symmetric domain, and $f:\Omega\to\mathbb{O}$ is a slice Fueter-regular function.
Then for any $x, x'\in\Omega$ with $\Re(x)=\Re(x')$ and $\lvert\Im(x)\rvert=\lvert\Im(x')\rvert$, the stem vectors of $f$ at $x,x'$ are identical due to Theorem \ref{thm-conditional-uniqueness-stem-vectors}.
Hence, the $I_1$-slice stem function are identically equal to the $I_2$-slice stem function on $\Omega^{I_1}\cap\Omega^{I_2}$ for any $I_1,I_2\in\mathbb{S}$ by Definition \ref{def-stem-function}, which completes the proof.
\end{proof}
\begin{remark}\label{Re-star-shaped-domain}
It is not difficult to see that if a domain $\Omega$ is star-shaped with respect to some point in the real axis, then it is CCL-symmetric. Thus there exist a large number of CCL-symmetric domains.
\end{remark}
Now we would like to give an explicit example of a non-CCL-symmetric domain.
First, we construct some open balls
$$B_\theta:=\left\{x\in\mathbb{O}\left|\lvert x-2\phi(\theta)-\exp(\theta\phi(\theta))\rvert<\frac{1}{4}\right.\right\}.
$$
Here $\phi(\theta)=I\cos\frac{\theta}{2}+J\sin\frac{\theta}{2}$ $(-\pi\leq\theta\leq\pi)$, and $I,J$ are two imaginary units with $I\perp J$.
The next thing to do is to construct the domain
$$
\Omega:=\cup_{\theta\in[-\pi,\pi]}B_\theta.
$$
Recall that in the end of Sect. \ref{subsection-slice-Fueter-function} we have presented an example of a non-complete slice Fueter-regular function whose definition domain is exactly this $\Omega$ above.
Finally, we arrive at the conclusion that $\Omega$ is a non-CCL-symmetric domain due to Corollary \ref{cor-complete-on-CCL-symmetric-domain}. This example shows that the concept ``CCL-symmetric domain" is non-trivial.

\section{CCL Riemann domains}
We notice that the CCL equivalence relation on every domain $\Omega\subset\mathbb{O}$ naturally induces a corresponding Riemann domain which can be considered as a unified definition domain of the stem functions of all slice Fueter-regular functions on $\Omega$. Some characteristics of this Riemann domain will be discussed in this part of our paper.
\subsection{A variant of CCL equivalence}
For any given domain $\Omega\subset\mathbb{O}$, we denote the disjoint union of its complex slices by $\hat{\Omega}$.
Technically, it can be expressed in the following form.
$$
\hat{\Omega}:=\bigcup_{I\in\mathbb{S}}\Omega^I\times\{I\}=\{(z,I)\in\mathbb{C}\times\mathbb{S}|\ z_I\in\Omega \}.
$$
By convention, a subset $O$ of $\hat{\Omega}$ is said to be open if and only if there are open subsets $O^I$ of $\Omega^I$ $(I\in\mathbb{S})$ such that
$$
O=\bigcup_{I\in\mathbb{S}}O^I\times\{I\}.
$$
And such topology of $\hat{\Omega}$ is compatible with the distance defined as
$$
d((z_1,I_1),(z_2,I_2)):=\left\{\begin{array}{ll}
                                 \frac{\lvert z_1-z_2\rvert}{\lvert z_1-z_2\rvert +1}, & \text{if }I_1=I_2,\\
                                 & \\
                                 1, & \text{if }I_1\neq I_2.
                               \end{array}\right.
$$

It is not difficult to see that the CCL equivalence relation on $\Omega$ can be converted into an equivalence relation on $\hat{\Omega}$.
\begin{definition}\label{def-induced-CCL-equivalence}
Let $\Omega$ be a domain in $\mathbb{O}$.
Two elements $(z,I),(z',I')\in\hat{\Omega}$ is said to be CCL-equivalent in $\hat{\Omega}$, denoted as $(z,I) \overset{\hat{\Omega}}{\simeq} (z',I')$, precisely if $z=z'$ and the octonions $z_I$ and $z'_{I'}$ are CCL-equivalent in $\Omega$.
\end{definition}
Briefly speaking, $(z,I) \overset{\hat{\Omega}}{\simeq} (z',I')$ iff $z=z'$ and $z_I \overset{\Omega}{\simeq} z'_{I'}$.

For convenience, we denote the open disc in $\mathbb{C}$ with center $z$ and radius $\delta$ by $U(z;\delta)$.
The local infectivity of CCL equivalence relation is shown in the theorem below.
\begin{theorem}\label{thm-local-infection-pattern-1}
Assume $(z,I_1) \overset{\hat{\Omega}}{\simeq} (z,I_2)$. Then there exists a $\delta>0$ such that  $(z',I_1) \overset{\hat{\Omega}}{\simeq} (z',I_2)$ holds for any $z'\in U(z;\delta)$.
\end{theorem}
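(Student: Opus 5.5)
The idea is to perturb the chain of CCL-connections that links $z_{I_1}$ to $z_{I_2}$: keep every spherical coordinate unchanged, and move only the common base paths by a small translation near their terminal point. By Definition \ref{def-induced-CCL-equivalence}, the hypothesis gives $z_{I_1}\overset{\Omega}{\simeq} z_{I_2}$. So there is a chain $x_0=z_{I_1},x_1,\dots,x_n=z_{I_2}$ in $\Omega$ with $x_{k-1}\overset{\Omega}{\sim}x_k$. For each $k$ we get coupled circular liftings $\gamma^k_{\Theta^k_1},\gamma^k_{\Theta^k_2}$ in $\Omega$ with common base $\gamma^k$, with $\Theta^k_1(0)=\Theta^k_2(0)$, and with terminal points $x_{k-1}$ and $x_k$.

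First we make all the bases end at $z$. Since every point of the chain has the form $x_k=\Re(z)\pm\lvert\Im(z)\rvert J$ for some $J\in\mathbb{S}$, the end point $\gamma^k(1)$ is either $z$ or $\bar z$. If it is $\bar z$, replace $\gamma^k$ by $\overline{\gamma^k}$ and each $\Theta^k_j$ by $-\Theta^k_j$. The liftings are unchanged, because $\tau_{-I}(\bar w)=\tau_I(w)$, and the coupling condition is preserved. So we may assume $\gamma^k(1)=z$ for all $k$. Then $\Theta^k_1(1)$ and $\Theta^k_2(1)$ are the units $J_{k-1},J_k$ with $x_{k-1}=z_{J_{k-1}}$ and $x_k=z_{J_k}$. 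When $\Im(z)\ne 0$ these units are determined by the points, so consecutive links match, with $J_0=I_1$ and $J_n=I_2$. If $\Im(z)=0$, all the $x_k$ equal the real point $z$. In that case we simply take constant bases $\gamma\equiv z$ with constant coordinates $I_1$ and $I_2$ respectively, using a ball around $z$, since $z'_{I_j}$ lies in that ball for $z'$ near $z$. So assume $\Im(z)\neq0$.

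Next comes the compactness step. Each trace $\gamma^k_{\Theta^k_j}([0,1])$ is a compact subset of the open set $\Omega$, so it has positive distance $\epsilon_{k,j}$ to $\mathbb{O}\setminus\Omega$. Let $\delta>0$ be smaller than all the $\epsilon_{k,j}$. For $z'\in U(z;\delta)$, define the translated base $\gamma'^k(t):=\gamma^k(t)+(z'-z)$ and keep the same $\Theta^k_j$. Because $\tau_I$ is an isometry from $\mathbb{C}$ onto $\mathbb{C}_I$, we get $\lvert\gamma'^k_{\Theta^k_j}(t)-\gamma^k_{\Theta^k_j}(t)\rvert=\lvert z'-z\rvert<\delta$, so the new liftings stay in $\Omega$. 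They have a common base and the same initial spherical points, so they are still coupled. Their terminal points are $z'_{J_{k-1}}$ and $z'_{J_k}$. Hence $z'_{J_{k-1}}\overset{\Omega}{\sim}z'_{J_k}$ for each $k$, and chaining these gives $z'_{I_1}\overset{\Omega}{\simeq}z'_{I_2}$, that is, $(z',I_1)\overset{\hat\Omega}{\simeq}(z',I_2)$.

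The main thing to get right is this bookkeeping: aligning all bases to end exactly at $z$, so that each link's terminal spherical coordinates are the $J_k$ and consecutive links glue correctly. The translation step itself is routine once the uniform $\delta$ is chosen from the finitely many compact traces.
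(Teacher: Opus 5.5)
\textbf{The gap: the real case.} Your argument for $\Im(z)\neq 0$ is correct, but the case $\Im(z)=0$ fails as written. Coupled circular liftings must have spherical coordinates with a common initial point (Definition~\ref{def-CCL}). Coordinates that are constantly $I_1$ and constantly $I_2$ violate this unless $I_1=I_2$, whatever base you pair them with. So you have not produced $z'_{I_1}\overset{\Omega}{\sim}z'_{I_2}$.

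The missing observation is that over a real base point the spherical coordinate can be moved freely without moving the lifting. Take the base $\gamma\equiv z$, $\Theta_1\equiv I_1$, and $\Theta_2$ any path in $\mathbb{S}$ from $I_1$ to $I_2$. Both liftings are then constantly the real point $z\in\Omega$, and they are coupled. Your translation step turns them into $t\mapsto z'_{I_1}$ and $t\mapsto z'_{\Theta_2(t)}$. These stay within distance $\lvert z'-z\rvert$ of $z$, because $z$ is real and $\tau_I$ is an isometry, so they lie in a ball $B\subset\Omega$ centered at $z$. They end at $z'_{I_1}$ and $z'_{I_2}$, as required. This is what the paper obtains from the star-shapedness of such a ball (Remark~\ref{Re-star-shaped-domain}).

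\textbf{The case $\Im(z)\neq0$: a different but valid route.} Your bookkeeping matches the paper's: conjugating bases that end at $\bar z$, and the matching of consecutive terminal units forced by $\Im(z)\neq0$. The perturbation step differs.
\begin{itemize}
\item The paper leaves each pair of coupled liftings untouched. It concatenates a segment from $z$ to $z'$ onto the common base, freezing the spherical coordinates at their terminal values $I'_{k-1}$ and $I'_k$. It only needs the $\delta$-balls around the finitely many chain points $x_k$ to lie in $\Omega$.
\item You translate the whole base by $z'-z$ and keep the spherical coordinates. This requires $\delta$ smaller than the distance from the compact traces to $\mathbb{O}\setminus\Omega$.
\end{itemize}
Both rest on $\lvert\tau_I(w)\rvert=\lvert w\rvert$. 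The paper's version uses only local room at the endpoints. Yours avoids concatenation and reparametrization, at the cost of needing a uniform neighborhood of the entire traces, which compactness supplies.
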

\begin{proof}
In the case that $z=a\in\mathbb{R}$, the proof is short.
In such case, observing a given ($8$-dimensional) open ball $B$ in $\Omega$ centered at $a$,
one can easily see that $B$ contains all $z'_{I_k}$ $(k=1,2)$ with $z'\in U(a;\delta)$, where $\delta$ is the radius of $B$.
Note that $B$ is star-shaped with respect to the point $a$ in the real axis.
Hence, as stated in Remark \ref{Re-star-shaped-domain}, $B$ is certainly CCL-symmetric, meaning $z'_{I_k}$ $(k=1,2)$ are CCL-equivalent, i.e.,
$$(z',I_1) \overset{\hat{\Omega}}{\simeq} (z',I_2).$$

Now we turn to the case that $z\notin\mathbb{R}$.
By Definition \ref{def-CCL-equivalence}, there exists a sequence of octonions $z_{I_1}=x_0,x_1,\cdots,x_n=z_{I_2}$ such that
$$x_ {k-1}\overset{\Omega}{\sim}x_k\quad\text{for } k=1,\cdots,n. $$
More explicitly, there are $n$ pairs of coupled circular liftings ${(\gamma_k)}_{\Theta_{k}}$, ${(\gamma_k)}_{\Theta'_{k}}$ such that
\begin{equation}\label{eq-proof-thm-local-infection-1}
 {(\gamma_k)}_{\Theta_{k}}(1)=x_{k-1},\ {(\gamma_k)}_{\Theta'_{k}}(1)=x_k \quad\text{for } k=1,\cdots, n,
\end{equation}
which is due to Definition \ref{def-CCL-connectedness}.
Without loss of generality, we may assume that $\gamma_k(1)=z$, because we can always replace the base $\gamma_k$ by its complex conjugate and attach the minus sign to the spherical coordinates $\Theta_{k}, \Theta'_{k}$ when some $\gamma_k(1)=\overline{z}$.
Denote $I'_0=I_1$, $I'_n=I_2$ and  $I'_k=\Theta'_{k}(1)$ for the rest of $k$'s.
Then by \eqref{eq-proof-thm-local-infection-1} we have
$$
x_k=z_{I'_k}\quad\text{for }  k=0,1,\cdots, n.
$$
Moreover, since $\Omega$ is open, there exists a positive number $\delta$ such that $\Omega$ includes all the open balls centered at $x_k$ $(k=0,1,\cdots,n)$ with radius $\delta$, indicating
$$U(z;\delta)\times \{I'_k\}\subset\hat{\Omega}. $$
Now we claim that $z'_{I'_{k-1}}$, $z'_{I'_k}$ $(k=1,\cdots, n)$ are CCL-connected, i.e.,
\begin{equation}\label{eq-proof-thm-local-infection-2}
z'_{I'_{k-1}}\overset{\Omega}{\sim}z'_{I'_k},
\end{equation}
for any $z'\in U(z;\delta)$.
It follows immediately that
$z'_{I_1}\overset{\Omega}{\simeq}z'_{I_2}$, or equivalently,
$$
(z',I_1) \overset{\hat{\Omega}}{\simeq} (z',I_2).
$$

To complete the proof in the second case, it only remains to verify \eqref{eq-proof-thm-local-infection-2}.
Note that the conditions $\gamma_k(1)=z\notin\mathbb{R}$ and ${(\gamma_k)}_{\Theta_{k}}(1)=x_{k-1}=z_{I'_{k-1}}$ ensure $\Theta_{k}(1)=I'_{k-1}$.
Define a path in $\mathbb{C}$ as
$$
\mu_k(t):=\left\{\begin{array}{ll}
              \gamma_k(2t), & 0\leq t\leq \frac{1}{2},\\
               (2-2t)z+(2t-1)z', & \frac{1}{2}<t\leq 1;\\
            \end{array}\right.
$$
and two pathes in $\mathbb{S}$ as
$$
\Psi_k(t):=\left\{\begin{array}{ll}
              \Theta_k(2t), & 0\leq t\leq \frac{1}{2},\\
               I'_{k-1}, & \frac{1}{2}<t\leq 1;\\
            \end{array}\right.\quad
\Psi'_k(t):=\left\{\begin{array}{ll}
              \Theta'_k(2t), & 0\leq t\leq \frac{1}{2},\\
               I'_{k}, & \frac{1}{2}<t\leq 1. \\
            \end{array}\right.
$$
It is easy to see that ${(\mu_k)}_{\Psi_k}$, ${(\mu_k)}_{\Psi'_k}$ are coupled circular liftings  in the domain $\Omega$. Hence their terminal points are CCL-connected, meaning  \eqref{eq-proof-thm-local-infection-2} is valid.
\end{proof}

\subsection{The quotient space under CCL equivalence}
For any domain $\Omega\subset\mathbb{O}$,
we let $[(z,I)]_{\Omega}$ denote the equivalence class of an element $(z,I)\in\hat{\Omega}$ under the equivalence relation $\overset{\hat{\Omega}}{\simeq}$, and $\mathcal{D}_{\Omega}$ denote the quotient space of $\hat{\Omega}$ by $\overset{\hat{\Omega}}{\simeq}$.
Additionally, we denote the canonical projection from $\hat{\Omega}$ to $\mathcal{D}_{\Omega}$ by $\pi$.
By convention, the quotient space $\mathcal{D}_{\Omega}$ is endowed with the finest topology which makes $\pi$ a continuous mapping.
In brief, a set $O\subset\mathcal{D}_{\Omega}$ is open iff $\pi^{-1}O$ is open in $\hat{\Omega}$.

The local infectivity of CCL equivalence (see Theorem \ref{thm-local-infection-pattern-1}) leads to the next theorem.
\begin{theorem}\label{thm-openness-of-pi}
The canonical projection $\pi:\hat{\Omega}\to\mathcal{D}_{\Omega}$ is open.
\end{theorem}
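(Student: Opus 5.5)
To show that $\pi$ is open, I will take an arbitrary open set $O\subset\hat{\Omega}$ and prove that $\pi^{-1}\pi(O)$ is open in $\hat{\Omega}$. By the definition of the quotient topology, this is exactly the statement that $\pi(O)$ is open in $\mathcal{D}_{\Omega}$. Recall also how open sets in $\hat{\Omega}$ look: a set is open if and only if each of its slices $\{z\,|\,(z,I)\in\cdot\}$ is open in $\Omega^I$. So it suffices to take any point of $\pi^{-1}\pi(O)$ and find a disc around it, in its own slice, that stays inside $\pi^{-1}\pi(O)$.

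Fix $(z,I_2)\in\pi^{-1}\pi(O)$. By definition of the quotient, there is some $(z',I_1)\in O$ with $(z',I_1)\overset{\hat{\Omega}}{\simeq}(z,I_2)$. Definition \ref{def-induced-CCL-equivalence} forces $z'=z$, so in fact $(z,I_1)\in O$ and $(z,I_1)\overset{\hat{\Omega}}{\simeq}(z,I_2)$.

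Now I apply Theorem \ref{thm-local-infection-pattern-1}. It gives $\delta_1>0$ such that $(w,I_1)\overset{\hat{\Omega}}{\simeq}(w,I_2)$ for every $w\in U(z;\delta_1)$. Implicit in that theorem's proof is that both $U(z;\delta_1)\times\{I_1\}$ and $U(z;\delta_1)\times\{I_2\}$ lie in $\hat{\Omega}$; I will shrink $\delta_1$ if needed to make this explicit. Separately, because $O$ is open and contains $(z,I_1)$, its $I_1$-slice is open, so there is $\delta_2>0$ with $U(z;\delta_2)\times\{I_1\}\subset O$.

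Set $\delta=\min(\delta_1,\delta_2)$. For every $w\in U(z;\delta)$ we then have $(w,I_1)\in O$ and $(w,I_2)\overset{\hat{\Omega}}{\simeq}(w,I_1)$. Hence $\pi(w,I_2)=\pi(w,I_1)\in\pi(O)$, that is, $(w,I_2)\in\pi^{-1}\pi(O)$. So the disc $U(z;\delta)\times\{I_2\}$ lies inside $\pi^{-1}\pi(O)$. Since $(z,I_2)$ was arbitrary, every $I_2$-slice of $\pi^{-1}\pi(O)$ is open in $\Omega^{I_2}$, and therefore $\pi^{-1}\pi(O)$ is open.

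There is no real obstacle here: the substantive input is the local infectivity already proved in Theorem \ref{thm-local-infection-pattern-1}. The only point that needs care is confirming that the discs supplied by that theorem actually lie in $\hat{\Omega}$ on both slices, which its proof provides through the choice of $\delta$ as a radius of balls around the points $z_{I'_k}$ contained in $\Omega$ (in the real case, a ball centred at $a$ contained in $\Omega$).
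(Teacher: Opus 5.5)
Your proof is correct and is essentially the paper's argument. You fix a point of $\pi^{-1}\pi(O)$, use the local infectivity of Theorem \ref{thm-local-infection-pattern-1} on one slice and the openness of $O$ on the other, and take the minimum radius; the only cosmetic difference is that the paper first reduces to basic open sets $A\times\{I\}$, whereas you work directly with a general open $O$.
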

\begin{proof}
Due to the construction of topologies of $\hat{\Omega}$ and $\mathcal{D}_{\Omega}$, it suffices to show that $\pi^{-1}\left(\pi(A\times\{I\})\right)$ is open for any nonempty open subset $A$ of every complex slice $\Omega^I$ $(I\in\mathbb{S})$.
Firstly, by definition, we see that
$$(z,I')\in\pi^{-1}(\pi(A\times\{I\})), $$
if and only if
$$z\in A \quad \text{and}\quad (z,I') \overset{\hat{\Omega}}{\simeq} (z,I). $$
Furthermore, for such $(z,I')$ and $(z,I)$ above, Theorem \ref{thm-local-infection-pattern-1} says there is a $\delta>0$ such that
$$(z',I') \overset{\hat{\Omega}}{\simeq} (z',I)$$
holds for any $z'\in U(z;\delta)$.
In addition, there is a $\delta'>0$ such that $U(z;\delta')\subset A$ since $A$ is open. It follows immediately that
$$z'\in A \quad \text{and}\quad (z',I') \overset{\hat{\Omega}}{\simeq} (z',I)$$
holds for any $z'\in U(z;\delta'')$,
where $\delta'':=\min\{\delta,\delta'\}$.
In other words,
$$(z',I')\in\pi^{-1}(\pi(A\times\{I\}))$$
for any $z'\in U(z;\delta'')$.
We thus know  $\pi^{-1}\left(\pi(A\times\{I\})\right)$ includes the open neighborhood $U(z;\delta'')\times\{I'\}$ of $(z,I')$, which means every element of $\pi^{-1}\left(\pi(A\times\{I\})\right)$ is an interior point. The proof is now completed.
\end{proof}
\subsection{The complex structure on $\mathcal{D}_\Omega$ induced by $\pi$} 
We notice that the canonical projection $\pi$ naturally induces a family of mappings from the complex slices $\Omega^I$ $(I\in\mathbb{S})$ to the quotient space $\mathcal{D}_{\Omega}$ as follows. 
\begin{equation*}
\pi^I(z):=\pi(z,I)=[(z,I)]_{\Omega}. 
\end{equation*}
Some basic properties of $\pi^I:\Omega^I\to\mathcal{D}_{\Omega}$ are showed in the lemmas below.
\begin{lem}\label{lemma-pi-I-embedding}
$\pi^I:\Omega^I\to\mathcal{D}_{\Omega}$ $(I\in\mathbb{S})$ are topological embeddings.
\end{lem}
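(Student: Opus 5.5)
To show that each $\pi^I:\Omega^I\to\mathcal{D}_{\Omega}$ is a topological embedding, I will verify three properties in turn: continuity, injectivity, and openness onto its image.

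\emph{Continuity.} Let $\iota^I:\Omega^I\to\hat{\Omega}$, $z\mapsto(z,I)$. Since $\iota^I$ is the inclusion of the slice $\Omega^I\times\{I\}$, which is open in $\hat{\Omega}$, it is continuous. The canonical projection $\pi$ is continuous by construction of the quotient topology. Hence $\pi^I=\pi\circ\iota^I$ is continuous.

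\emph{Injectivity.} Suppose $\pi^I(z)=\pi^I(z')$, i.e. $(z,I)\overset{\hat{\Omega}}{\simeq}(z',I)$. By Definition \ref{def-induced-CCL-equivalence}, CCL-equivalence in $\hat{\Omega}$ forces the complex coordinates to coincide, so $z=z'$. This step is immediate and is exactly why the variant relation on $\hat{\Omega}$ is used rather than the relation on $\Omega$ (where $z_I$ and $\overline{z}_{-I}$ coincide as octonions).

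\emph{Openness onto the image.} It suffices to show that $\pi^I(A)$ is open in $\mathcal{D}_{\Omega}$ for every open $A\subset\Omega^I$. Then $\pi^I(A)=\pi^I(A)\cap\pi^I(\Omega^I)$ is open in the image, so $\pi^I$ is an open map onto its image. Now $A\times\{I\}$ is open in $\hat{\Omega}$ by the definition of the topology on $\hat{\Omega}$, and $\pi^I(A)=\pi(A\times\{I\})$. Theorem \ref{thm-openness-of-pi} says $\pi$ is open, so $\pi^I(A)$ is open.

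Combining the three properties, $\pi^I$ is a continuous injection that is open onto its image, hence a homeomorphism onto $\pi^I(\Omega^I)$. There is no real obstacle: the only substantive input is Theorem \ref{thm-openness-of-pi}, which in turn rests on the local infectivity Theorem \ref{thm-local-infection-pattern-1}. The one point to state carefully is that openness in $\mathcal{D}_\Omega$ implies relative openness in the image, so a homeomorphism onto the image follows.
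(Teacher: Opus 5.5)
Your proof is correct and is essentially the paper's own argument. Injectivity comes from the fact that $\overset{\hat{\Omega}}{\simeq}$ forces equal complex coordinates. Continuity comes from factoring $\pi^I=\pi\circ\sigma^I$ through the slice inclusion into $\hat{\Omega}$. Openness comes from $A\times\{I\}$ being open in $\hat{\Omega}$ together with Theorem \ref{thm-openness-of-pi}.
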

\begin{proof}
Firstly, it is easy to see that if $[(z,I)]_{\Omega}=[(z',I)]_{\Omega}$, then $z=z'$, indicating that
every $\pi^I$ is injective.
Secondly, considering the mapping
$$\sigma^I: \Omega^I\to\hat{\Omega}, \quad z\mapsto (z,I), $$
we have
$$\pi^I=\pi\circ\sigma^I. $$
Apparently, the second factor $\sigma^I$ is a topological embedding due to the construction of the topology of $\hat{\Omega}$, which implies that $\sigma^I$ is continuous and open.
In addition, the first factor $\pi$ is continuous due to the construction of the topology of $\mathcal{D}_{\Omega}$, and open as shown by Theorem \ref{thm-openness-of-pi}.
In conclusion, every $\pi^I$ is injective, continuous and open, which completes the proof.
\end{proof}

\begin{lem}\label{lemma-pi-I-holomorphic-compatible}
$\pi^I:\Omega^I\to\mathcal{D}_{\Omega}$ $(I\in\mathbb{S})$ are holomorphically compatible, i.e., the mapping
$$(\pi^{I'})^{-1}\circ\pi^I: (\pi^I)^{-1}O^{I,I'}\to(\pi^{I'})^{-1}O^{I,I'}
$$
is holomorphic for any $I, I'\in \mathbb{S}$ with $O^{I,I'}\neq\emptyset$,
where $O^{I,I'}:=\pi^{I'}(\Omega^{I'})\cap\pi^{I}(\Omega^{I})$
\end{lem}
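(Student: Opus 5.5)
The plan is to show that the transition map $(\pi^{I'})^{-1}\circ\pi^I$ is simply the identity on its domain. Holomorphy then follows at once.

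First identify the domain. A point $z$ lies in $(\pi^I)^{-1}O^{I,I'}$ exactly when $z\in\Omega^I$ and $\pi^I(z)=\pi^{I'}(w)$ for some $w\in\Omega^{I'}$. By the definition of $\pi^I$, this means $[(z,I)]_\Omega=[(w,I')]_\Omega$, that is, $(z,I)\overset{\hat{\Omega}}{\simeq}(w,I')$. Definition \ref{def-induced-CCL-equivalence} then forces $w=z$. Hence
$$(\pi^I)^{-1}O^{I,I'}=\{z\in\Omega^I\cap\Omega^{I'}\mid (z,I)\overset{\hat{\Omega}}{\simeq}(z,I')\},$$
and by the symmetric argument this set coincides with $(\pi^{I'})^{-1}O^{I,I'}$. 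On this set we get $(\pi^{I'})^{-1}(\pi^I(z))=z$, because $\pi^{I'}$ is injective (Lemma \ref{lemma-pi-I-embedding}) and $\pi^{I'}(z)=[(z,I')]_\Omega=[(z,I)]_\Omega=\pi^I(z)$.

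Next, check that this common domain is an open subset of $\mathbb{C}$, so that holomorphy makes sense. This is immediate from Lemma \ref{lemma-pi-I-embedding}, since $O^{I,I'}$ is open in $\mathcal{D}_\Omega$ and $\pi^I$ is continuous. It can also be seen directly from Theorem \ref{thm-local-infection-pattern-1} together with the openness of $\Omega^I\cap\Omega^{I'}$.

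So the transition map is the identity map on an open subset of $\mathbb{C}$, which is holomorphic. The only point requiring care, and hardly an obstacle, is the bookkeeping in the first step: the $z$-coordinate is preserved by $\overset{\hat{\Omega}}{\simeq}$, so the two preimage sets are literally the same subset of $\mathbb{C}$. There is no conjugation or reflection, since the equivalence on $\hat{\Omega}$ is defined with $z=z'$ rather than up to $\overline{z}$.
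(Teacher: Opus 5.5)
Your proposal is correct and matches the paper's argument. The two preimage sets coincide because $\overset{\hat{\Omega}}{\simeq}$ preserves the $z$-coordinate, they are open by Lemma~\ref{lemma-pi-I-embedding}, and the transition map is the identity on them, hence holomorphic; you just make the identification of the sets more explicit than the paper does.
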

\begin{proof}
The conclusion of this lemma is quite trivial, since the previous lemma implies that all sets involved in this lemma are open and a simple calculation yields $(\pi^{I'})^{-1}\circ\pi^I(z)=z$ for all  $z\in(\pi^I)^{-1}O^{I,I'}$.
Indeed, $(\pi^I)^{-1}O^{I,I'}$ and $(\pi^{I'})^{-1}O^{I,I'}$ are the same, and $(\pi^{I'})^{-1}\circ\pi^I$ is the identity mapping from such open set to itself, which obviously indicates that it is holomorphic.
\end{proof}

As a direct result of Lemmas \ref{lemma-pi-I-embedding} and  \ref{lemma-pi-I-holomorphic-compatible}, we see that the embeddings $\pi^I$ $(I\in\mathbb{S})$ induce a complex structure on $\mathcal{D}_\Omega$.
\begin{theorem}\label{thm-existence-complex-altas}
The family of mappings $(\pi^I)^{-1}:\pi^I(\Omega^I)\to\Omega^I$ $(I\in\mathbb{S})$
is a complex atlas on $\mathcal{D}_\Omega$.
\end{theorem}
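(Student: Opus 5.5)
We verify the three defining requirements of a complex atlas on the topological space $\mathcal{D}_\Omega$: each chart is a homeomorphism from an open subset of $\mathcal{D}_\Omega$ onto an open subset of $\mathbb{C}$; the chart domains cover $\mathcal{D}_\Omega$; and all transition maps are holomorphic. Each of these will be read off from Lemmas \ref{lemma-pi-I-embedding} and \ref{lemma-pi-I-holomorphic-compatible} together with Theorem \ref{thm-openness-of-pi}.

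\emph{Charts.} Fix $I\in\mathbb{S}$. Since $\Omega$ is open in $\mathbb{O}$ and $\tau_I$ is continuous, $\Omega^I=\tau_I^{-1}(\Omega)$ is open in $\mathbb{C}$. By Lemma \ref{lemma-pi-I-embedding}, $\pi^I$ is injective, continuous and open. Hence $\pi^I(\Omega^I)$ is an open subset of $\mathcal{D}_\Omega$, and $\pi^I:\Omega^I\to\pi^I(\Omega^I)$ is a homeomorphism. Its inverse $(\pi^I)^{-1}$ is therefore a homeomorphism from the open set $\pi^I(\Omega^I)\subset\mathcal{D}_\Omega$ onto the open set $\Omega^I\subset\mathbb{C}$, i.e.\ a chart. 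Empty $\Omega^I$ contribute nothing and can be discarded.

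\emph{Covering.} Any point of $\mathcal{D}_\Omega$ has the form $[(z,I)]_\Omega$ with $(z,I)\in\hat{\Omega}$, which means $z\in\Omega^I$. Thus $[(z,I)]_\Omega=\pi^I(z)\in\pi^I(\Omega^I)$, so $\bigcup_{I\in\mathbb{S}}\pi^I(\Omega^I)=\mathcal{D}_\Omega$.

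\emph{Transition maps.} For $I,I'$ with $O^{I,I'}\neq\emptyset$, the transition map is $(\pi^{I'})^{-1}\circ\pi^I$, restricted to $(\pi^I)^{-1}O^{I,I'}$. Lemma \ref{lemma-pi-I-holomorphic-compatible} says this map is the identity on an open set, hence holomorphic. Applying the same lemma with $I$ and $I'$ swapped gives the reverse transition map, so the charts are holomorphically compatible. This completes the verification.

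I do not expect a real obstacle. The only point needing care is that the charts' domains are open in $\mathcal{D}_\Omega$, and this rests on the openness of $\pi$ (Theorem \ref{thm-openness-of-pi}), which Lemma \ref{lemma-pi-I-embedding} already incorporates. Hausdorffness and second countability are not part of the notion of atlas and belong to the later Riemann-domain discussion.
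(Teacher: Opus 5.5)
Your proposal is correct and follows the paper's proof essentially line by line. The ingredients match: Lemma~\ref{lemma-pi-I-embedding}, which rests on the openness of $\pi$, makes each $\pi^I(\Omega^I)$ open and $(\pi^I)^{-1}$ a homeomorphism onto $\Omega^I$; the sets $\pi^I(\Omega^I)$ cover $\mathcal{D}_\Omega$ because every point is $[(z,I)]_\Omega$ with $z\in\Omega^I$; and Lemma~\ref{lemma-pi-I-holomorphic-compatible} gives holomorphic compatibility because the transition maps are identities. Your extra remarks (discarding empty $\Omega^I$, and noting that Hausdorffness is not part of the atlas claim) are accurate refinements rather than a different route.
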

\begin{proof}
Firstly, Lemma \ref{lemma-pi-I-embedding} implies $\pi^I(\Omega^I)$ is open, and the relation between $\pi$ and $\pi^I$ $(I\in\mathbb{S})$ yields
$$\cup_{I\in\mathbb{S}}\ \pi^I(\Omega^I)=\pi(\hat{G})=\mathcal{D}_\Omega.
$$
Thus we see that $\left\{\pi^I(\Omega^I)\right\}_{I\in \mathbb{S}}$ is an open covering of $\mathcal{D}_\Omega$.
Secondly, Lemma \ref{lemma-pi-I-embedding} indicates that the mappings $(\pi^I)^{-1}:\pi^I(\Omega^I)\to\Omega^I$ $(I\in\mathbb{S})$ are homeomorphisms, and Lemma \ref{lemma-pi-I-holomorphic-compatible} indicates that they are holomorphically compatible.
 Therefore, $(\pi^I)^{-1}:\pi^I(\Omega^I)\to\Omega^I$ $(I\in\mathbb{S})$
is a complex atlas on $\mathcal{D}_\Omega$.
\end{proof}
For convenience, we let $\Pi_\Omega$ denote the complex structure generated by the complex atlas in Theorem \ref{thm-existence-complex-altas}.
\subsection{The natural local homeomorphism from $\mathcal{D}_\Omega$ to $\mathbb{C}$}
There is a natural mapping from $\mathcal{D}_\Omega$ to $\mathbb{C}$ defined as
$$
P: [(z,I)]_\Omega \mapsto z.
$$
This mapping is obviously well-defined, since
$$[(z,I)]_\Omega=[(z',I')]_\Omega \quad\Longleftrightarrow\quad (z,I)\overset{\hat{\Omega}}{\simeq}(z',I') \quad\Longrightarrow\quad z=z'. $$
A direct calculation yields that $P\circ\pi^I$ is the identity mapping on $\Omega^I$ $(I\in\mathbb{S})$. In light of Theorem \ref{thm-existence-complex-altas}, we thus see that $P$ is not only a holomorphic function (w.r.t. the complex structure $\Pi_\Omega$), but also a local homeomorphism. Then it follows immediately that the quotient space $\mathcal{D}_\Omega$ equipped with the local homeomorphism $P$ is a Riemann domain over $\mathbb{C}$ (see, e.g., Sec. G of \cite{Gunning-1965}). These conclusions are summarized as follows.
\begin{theorem}\label{thm-CCL-Riemann-domain}
$P$ is holomorphic w.r.t. the complex structure $\Pi_\Omega$, and $(\mathcal{D}_\Omega,P)$ is a Riemann domain over $\mathbb{C}$.
\end{theorem}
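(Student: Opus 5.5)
The plan is to verify three things: $\mathcal{D}_\Omega$ is a connected Hausdorff Riemann surface, $P$ is holomorphic in the charts of $\Pi_\Omega$, and $P$ is a local homeomorphism. Together these are exactly the requirements for a Riemann domain over $\mathbb{C}$ (Sec. G of Gunning--Rossi).

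\textbf{Holomorphy and local homeomorphism.} In the chart $(\pi^I)^{-1}:\pi^I(\Omega^I)\to\Omega^I$ from Theorem \ref{thm-existence-complex-altas}, the local representative of $P$ is $P\circ\pi^I=\mathrm{id}_{\Omega^I}$. This is holomorphic, so $P$ is holomorphic with respect to $\Pi_\Omega$. Moreover, $P|_{\pi^I(\Omega^I)}=(\pi^I)^{-1}$. By Lemma \ref{lemma-pi-I-embedding} this map is a homeomorphism onto the open set $\Omega^I\subset\mathbb{C}$, and the sets $\pi^I(\Omega^I)$ cover $\mathcal{D}_\Omega$. Hence $P$ is a local homeomorphism.

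\textbf{Hausdorff property.} This is the step needing genuine care, and I expect it to be the main obstacle. Take two distinct classes $p=[(z,I)]_\Omega$ and $q=[(z',I')]_\Omega$.
\begin{itemize}
\item If $z\neq z'$, choose disjoint discs around $z$ and $z'$. Their preimages under the continuous map $P$ separate $p$ and $q$.
\item If $z=z'$, then $(z,I)\not\simeq(z,I')$. Choose $\delta>0$ so that $U(z;\delta)\subset\Omega^I\cap\Omega^{I'}$, and set $V=\pi^I(U(z;\delta))$ and $V'=\pi^{I'}(U(z;\delta))$. Both are open by Lemma \ref{lemma-pi-I-embedding}.
\end{itemize}
In the second case, suppose $V\cap V'\neq\emptyset$. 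Then some $w\in U(z;\delta)$ satisfies $(w,I)\simeq(w,I')$. To reach a contradiction I would show that $E:=\{w\in U(z;\delta): (w,I)\simeq(w,I')\}$ is both open and closed in the disc. Openness is exactly Theorem \ref{thm-local-infection-pattern-1}.

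For closedness I would prove a converse infectivity. Suppose $w_n\to w$ with $w_n\in E$. Then $w_I$ and $(w_n)_I$ are joined inside $\Omega$ by a short segment in the fixed slice with direction $I$, and likewise for $I'$. These segments can be appended to the coupled circular liftings realizing $(w_n)_I\simeq(w_n)_{I'}$, with constant spherical coordinates $I$ and $I'$ on the appended pieces. This mirrors the construction of $\mu_k,\Psi_k,\Psi'_k$ in the proof of Theorem \ref{thm-local-infection-pattern-1}. It yields $w\in E$.

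Since the disc is connected and $E\neq\emptyset$, we get $E=U(z;\delta)$, so $z\in E$, contradicting $p\neq q$. Hence $V\cap V'=\emptyset$. If appending the segments turns out awkward at real points, I would shrink $\delta$ so that the disc either avoids $\mathbb{R}$ or is centred on $\mathbb{R}$. In the latter case any two slices meet at the centre, and a ball about a real point is CCL-symmetric by Remark \ref{Re-star-shaped-domain}.

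\textbf{Connectedness and second countability.} If $\Omega$ meets $\mathbb{R}$, every slice $\Omega^I$ contains a common real point $a$. All $(a,I)$ lie in one class, so each connected component of a slice is joined through real points. If $\Omega$ avoids $\mathbb{R}$, I would use Theorem \ref{thm-connectedness-via-CL}: any two points are joined by a circular lifting $\gamma_\Theta$. Tracking $t\mapsto[(\gamma(t),\Theta(t))]_\Omega$, after conjugating so that bases are consistent, gives a path in $\mathcal{D}_\Omega$, and its continuity again uses Theorem \ref{thm-local-infection-pattern-1}. If the cited definition of Riemann domain does not require connectedness, this part can be dropped, and second countability is likewise not needed (Radó's theorem supplies it for connected Riemann surfaces anyway). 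These checks complete the proof that $(\mathcal{D}_\Omega,P)$ is a Riemann domain over $\mathbb{C}$.
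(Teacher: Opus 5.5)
Your first paragraph is exactly the paper's proof: since $P\circ\pi^I=\mathrm{id}_{\Omega^I}$, $P$ is holomorphic and a local homeomorphism. The paper then just cites Gunning--Rossi and never checks Hausdorffness or connectedness. You are right that these are the real issues. However, the arguments you add for them fail, and they cannot be repaired.

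\textbf{Hausdorff.} In the closedness of $E$ you append segments to ``the coupled circular liftings realizing $(w_n)_I\simeq(w_n)_{I'}$''. In general there is no such single pair, because $\overset{\Omega}{\simeq}$ is only the transitive closure of $\overset{\Omega}{\sim}$. What you have is a chain $(w_n)_I=x_0\sim x_1\sim\cdots\sim x_m=(w_n)_{I'}$ with $x_k=(w_n)_{J_k}$. Extending the $k$-th pair by a segment from $w_n$ to $w$ requires $w'_{J_{k-1}},w'_{J_k}\in\Omega$ along that segment. Nothing guarantees this for the intermediate directions $J_k$, which depend on $n$. Theorem \ref{thm-local-infection-pattern-1} escapes the problem only because it fixes one chain and then shrinks $\delta$. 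Your appending argument therefore works only for a direct link $(w_n)_I\sim(w_n)_{I'}$.

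In fact $E$ need not be closed. Away from $\mathbb{R}$, a domain is encoded by the open set $Y=\{(z,J):\Im z>0,\ z_J\in\Omega\}\subset\mathbb{C}\times\mathbb{S}$. Take the following data.
\begin{itemize}
\item In $\mathbb{S}$: small caps $C_I,C_{I'},C_K$ with disjoint closures, a thin tube $A_1$ joining $C_I$ to $C_K$, and a thin tube $A_2$ joining $C_K$ to $C_{I'}$, with $\overline{A_1}\cap\overline{C_{I'}}=\overline{A_2}\cap\overline{C_I}=\emptyset$.
\item In the upper half-plane: a disc $U$ about $w$, its open right half $H$, discs $D_1,D_2$ with closures disjoint from each other and from $\overline{U}$, and disjoint corridors $T_j$ from $H$ to $D_j$ with $T_j\cap U\subset H$.
\end{itemize}
Let $Y$ be the union of $(U\cup T_1\cup D_1)\times C_I$, $(U\cup T_2\cup D_2)\times C_{I'}$, $(H\cup T_1\cup T_2\cup D_1\cup D_2)\times C_K$, $D_1\times A_1$ and $D_2\times A_2$.

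For $z\in H$, run a base from $D_1$ through $T_1$ to $z$. Freeze one spherical coordinate at $I$, and first turn the other through $A_1$ to $K$. This gives $z_I\sim z_K$, and likewise $z_K\sim z_{I'}$ via $D_2$, so $E\supset H$.

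Now suppose coupled liftings end at $w_J,w_{J'}$ with $J\in C_I$, $J'\in C_{I'}$; after conjugating, their base lies in the upper half-plane. While the coordinates stay in $C_I$ and $C_{I'}$, the base lies in $U$. Over $\overline{U}$ no fibre of $Y$ meets $\partial C_I\cup\partial C_{I'}$. A clopen argument in $t$ then gives $\Theta(0)\in C_I$ and $\Theta'(0)\in C_{I'}$, contradicting $\Theta(0)=\Theta'(0)$. Since the fibre over $w$ is $C_I\cup C_{I'}$, no chain crosses between the caps, so $w\notin E$. But every $U(w;\epsilon)$ meets $H$, so $[(w,I)]_\Omega\neq[(w,I')]_\Omega$ cannot be separated, and $\mathcal{D}_\Omega$ is not Hausdorff.

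\textbf{Connectedness.} If $\Omega\cap\mathbb{R}=\emptyset$, then $P(\mathcal{D}_\Omega)\subset\mathbb{C}\setminus\mathbb{R}$ meets both half-planes, since both $[(z,I)]_\Omega$ and $[(\overline{z},-I)]_\Omega$ exist. Hence $\mathcal{D}_\Omega$ is disconnected; this is the two-component example in the paper's own Remark. Your step ``after conjugating so that bases are consistent'' fails because conjugation changes the classes at both ends simultaneously. A lifting from $z_I$ to $z'_{I'}$ whose base ends at $\overline{z'}$ only reaches $[(\overline{z'},-I')]_\Omega$.

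When $\Omega$ meets $\mathbb{R}$, your sketch needs Theorem \ref{thm-connectedness-via-CL} to handle components of $\Omega^I$ missing $\mathbb{R}$. With that it works, because $[(a,J)]_\Omega$ does not depend on $J$ for real $a$.

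So both your argument and the paper's establish only that $P$ is a holomorphic local homeomorphism on a complex $1$-dimensional space. That space may be non-Hausdorff and may have two components. Under the standard definition (Hausdorff, and usually connected), the ``Riemann domain'' conclusion needs extra hypotheses on $\Omega$.
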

We name $(\mathcal{D}_\Omega,P)$ as the CCL Riemann domain associated with $\Omega$.
\subsection{A connection between circular liftings in $\Omega$ and liftings in $\mathcal{D}_\Omega$}
Recall the classical definition of path lifting (see, e.g., Pages 22-23 of \cite{Forster-1981}):

Let $\gamma:[0,1]\to X$, $\Gamma:[0,1]\to Y$ be pathes in the topological spaces $X$ and $Y$ respectively, and $p:Y\to X$ be a local homeomorphism. Then $\Gamma$ is said to be a lifting of $\gamma$ w.r.t. $p$ if $\gamma=p\circ\Gamma$.

One can easily see that a circular lifting is not strictly a ``real" lifting.
But the theorem below shows  that every circular lifting in any given domain $\Omega\subset\mathbb{O}$  induces a lifting in the CCL Riemann domain associated with $\Omega$ in a natural way.
\begin{theorem}\label{thm-from-CL-to-L}
Assume $\gamma_\Theta$ is a circular lifting in a domain $\Omega\subset\mathbb{O}$, i.e., the range of $\gamma_\Theta$ is included by $\Omega$. Then the mapping $\Gamma$ defined as
$$
\Gamma: t\mapsto [(\gamma(t),\Theta(t))]_\Omega \quad (0\leq t\leq 1)
$$
is a path in $\mathcal{D}_\Omega$ satisfying $\gamma=P\circ\Gamma$, i.e., $\Gamma$ is a lifting of $\gamma$ w.r.t. $P$.
\end{theorem}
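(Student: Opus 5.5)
First I check that $\Gamma$ is well defined: for every $t$, $(\gamma(t))_{\Theta(t)}=\gamma_\Theta(t)\in\Omega$, so $(\gamma(t),\Theta(t))\in\hat{\Omega}$ and $\Gamma(t)=\pi(\gamma(t),\Theta(t))$ makes sense. The identity $\gamma=P\circ\Gamma$ then follows at once from the definition of $P$, because $P([(\gamma(t),\Theta(t))]_\Omega)=\gamma(t)$. So the real work is continuity of $\Gamma$. Note that $t\mapsto(\gamma(t),\Theta(t))$ is \emph{not} continuous into $\hat{\Omega}$, since $\hat{\Omega}$ carries the disjoint-union topology in which distinct $I$'s are at distance $1$. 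Continuity therefore has to come from the CCL identifications in $\mathcal{D}_\Omega$.

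I will prove continuity locally at each $t^*\in[0,1]$. Put $z^*=\gamma(t^*)$ and $I^*=\Theta(t^*)$, and take an open ball $B\subset\Omega$ centred at $\gamma_\Theta(t^*)$ with radius $r$. Let $T\ni t^*$ be a small interval, closed in $[0,1]$, with $\gamma_\Theta(T)\subset B$ and also $(\gamma(t))_{I^*}\in B$ for $t\in T$. The second condition can be arranged because $|(\gamma(t))_{I^*}-(z^*)_{I^*}|=|\gamma(t)-z^*|\to0$. The central claim is
\[
(\gamma(t),\Theta(t))\overset{\hat{\Omega}}{\simeq}(\gamma(t),I^*)\quad\text{for all }t\in T .
\]
To prove it for a fixed $t\in T$, I build a pair of coupled circular liftings. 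Their common base is the segment from $\gamma(t)$ to $\gamma(t^*)$ followed by $\gamma$ restricted from $t^*$ back to $t$, reparametrized. Both spherical coordinates start at $I^*$. The first stays constant equal to $I^*$ along the segment and then follows $\Theta$ from $t^*$ to $t$. The second is constant equal to $I^*$ throughout.

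\textbf{Main obstacle.} I must check that both liftings remain in $B\subset\Omega$. On the $\gamma$ portion, the first lifting is $\gamma_\Theta$ restricted to $T$, which lies in $B$ by choice of $T$. The second lifting is $(\gamma(s))_{I^*}$, which lies in $B$ by the second condition on $T$. On the segment portion both liftings equal $(\text{segment point})_{I^*}$, and these lie in $B$ because $B$ is convex and contains the endpoints $(\gamma(t))_{I^*}$ and $(z^*)_{I^*}$. The terminal points are $(\gamma(t))_{\Theta(t)}$ and $(\gamma(t))_{I^*}$. These are therefore CCL-connected, which gives the claim. (Alternatively one could invoke the CCL-symmetry of balls from Remark \ref{Re-star-shaped-domain}, but that remark only covers balls centred on the real axis, so the explicit construction is safer.)

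With the claim in hand, $\Gamma(t)=\pi^{I^*}(\gamma(t))$ for all $t\in T$. Now $\gamma$ is continuous into $\Omega^{I^*}$ on $T$, and $\pi^{I^*}$ is continuous by Lemma \ref{lemma-pi-I-embedding}, so $\Gamma|_T$ is continuous. Since $T$ is a neighbourhood of $t^*$ in $[0,1]$, $\Gamma$ is continuous at $t^*$. As $t^*$ was arbitrary, $\Gamma$ is a path in $\mathcal{D}_\Omega$ with $P\circ\Gamma=\gamma$, i.e. a lifting of $\gamma$ w.r.t. $P$.
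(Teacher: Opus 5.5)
Your proof is correct and has the same structure as the paper's. You establish well-definedness, get $P\circ\Gamma=\gamma$ from the definition of $P$, and obtain continuity by showing $\Gamma(t)=[(\gamma(t),\Theta(t^*))]_\Omega=\pi^{\Theta(t^*)}(\gamma(t))$ for $t$ near $t^*$ and then using the chart $\pi^{\Theta(t^*)}$.

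The only difference is the witness for $(\gamma(t),\Theta(t))\overset{\hat{\Omega}}{\simeq}(\gamma(t),\Theta(t^*))$. The paper keeps the base constant at $\gamma(t)$ and moves the spherical coordinate along $\Theta$, using a $\delta$ with $z_I\in\Omega$ for all $z\in\gamma((t^*-\delta,t^*+\delta)\cap[0,1])$ and $I\in\Theta((t^*-\delta,t^*+\delta)\cap[0,1])$. You instead move the base along $\gamma$ inside a ball, with one spherical coordinate frozen at $I^*$. This works, but your initial segment from $\gamma(t)$ to $\gamma(t^*)$ is superfluous, since the common base can simply start at $\gamma(t^*)$.
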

Note that here $\gamma$ is the base of $\gamma_\Theta$ and $P$ is the natural local homeomorphism from $\mathcal{D}_\Omega$ to $\mathbb{C}$.
\begin{proof}
Firstly, we will show that $\Gamma$ is well-defined. For any $t\in [0,1]$, it is easy to see that $(\gamma(t),\Theta(t))$ is an element in $\hat{\Omega}$, since the range of $\gamma_\Theta$ is included by $\Omega$ as assumed. Hence $\Gamma$ is a well-defined mapping from $[0,1]$ to $\mathcal{D}_\Omega$.
Now we are going to prove that $\Gamma$ is continuous. Briefly speaking, the continuity of $\gamma$ and $\Theta$ ensures that of $\Gamma$. Indeed, for any $t^*\in[0,1]$, there is a $\delta>0$ such that
$$
z_I\in \Omega
$$
for any $z\in\gamma\left((t^*-\delta,t^*+\delta)\cap[0,1]\right)$ and $I\in\Theta\left((t^*-\delta,t^*+\delta)\cap[0,1]\right)$,
which leads to $$z_{I_1}\overset{\Omega}{\simeq}z_{I_2},$$
or equivalently,
$$
[(z,I_1)]_\Omega=[(z,I_2)]_\Omega,
$$
for any $z\in\gamma\left((t^*-\delta,t^*+\delta)\cap[0,1]\right)$ and $I_1, I_2\in\Theta\left((t^*-\delta,t^*+\delta)\cap[0,1]\right)$, since $z_{I_1}$ and $z_{I_2}$ are connected by a circular lifting in $\Omega$ with a constant base.
Taking
$$z=\gamma(t), I_1=\Theta(t)\text{ and } I_2=\Theta(t^*)$$
 with $t\in(t^*-\delta,t^*+\delta)\cap[0,1]$, we thus see
$$[(\gamma(t),\Theta(t))]_\Omega=[(\gamma(t),\Theta(t^*))]_\Omega, $$
meaning
$$\Gamma(t)=[(\gamma(t),\Theta(t^*))]_\Omega$$
holds for any $t\in(t^*-\delta,t^*+\delta)\cap[0,1]$.
Therefore,
$$(\pi^{I})^{-1}\Gamma\equiv\gamma\quad\text{on }(t^*-\delta,t^*+\delta)\cap[0,1],$$
when $I=\Theta(t^*)$.
In light of Theorem \ref{thm-existence-complex-altas}, it follows that $\Gamma$ is continuous on an open neighborhood of any point $t^*$ in the interval $[0,1]$.
Moreover, a direct calculation yields $\gamma=P\circ\Gamma$. The proof is now completed.
\end{proof}

The above theorem shows a strong connection between (octonionic) circular liftings and liftings in CCL Riemann domains. The connection also reveal a topological characteristic of CCL Riemann domains as shown below.
\begin{corollary}
The CCL Riemann domain associated with every domain in $\mathbb{O}$ has at most two connected components.
\end{corollary}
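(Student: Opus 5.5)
The approach is to show that every point of $\mathcal{D}_\Omega$ can be joined by a path to one of at most two distinguished points. We fix a base point $x^*\in\Omega$. If $\Omega\cap\mathbb{R}\neq\emptyset$, we take $x^*$ real; otherwise we take any $x^*$ with $\Im(x^*)\neq0$. The two candidate "anchor" classes are $\pi(z^*,I^*)$ and $\pi(\overline{z^*},I^*)$, where $z^*=\Re(x^*)+\lvert\Im(x^*)\rvert i$ and $I^*=\Im(x^*)/\lvert\Im(x^*)\rvert$ (with $I^*$ arbitrary if $x^*$ is real). Note that $(z^*)_{I^*}=(\overline{z^*})_{-I^*}=x^*$, so these are the only two ways to represent the base point in $\hat\Omega$.

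Given an arbitrary point $[(z,I)]_\Omega\in\mathcal{D}_\Omega$, we put $x:=z_I\in\Omega$. By Theorem \ref{thm-connectedness-via-CL}, there is a circular lifting $\gamma_\Theta$ in $\Omega$ with $\gamma_\Theta(0)=x^*$ and $\gamma_\Theta(1)=x$. Reversing the path (a reversed circular lifting is again a circular lifting), Theorem \ref{thm-from-CL-to-L} gives a path $\Gamma(t)=[(\gamma(t),\Theta(t))]_\Omega$ in $\mathcal{D}_\Omega$.

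Its endpoint at $t=1$ is $[(\gamma(1),\Theta(1))]_\Omega$, where $\gamma(1)_{\Theta(1)}=x=z_I$. Since $z_I=w_J$ forces either $(w,J)=(z,I)$, or $(w,J)=(\overline z,-I)$, or (when $z$ is real) $w=z$ with $J$ arbitrary, we must relate $[(\gamma(1),\Theta(1))]_\Omega$ to $[(z,I)]_\Omega$.

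This is where the main subtlety lies. If $z\notin\mathbb{R}$ and $(\gamma(1),\Theta(1))=(\overline z,-I)$, we instead use the conjugated lifting: replace $\gamma$ by $\overline\gamma$ and $\Theta$ by $-\Theta$. This changes neither $\gamma_\Theta$ nor the fact that it is a circular lifting in $\Omega$, and it now ends at $(z,I)$; its initial point becomes the other anchor $(\overline{\gamma(0)},-\Theta(0))$. If $z\in\mathbb{R}$, we use the observation from the proof of Theorem \ref{thm-from-CL-to-L}: all $(z,J)$ with $z_J=z$ are CCL-equivalent, since they are joined by circular liftings with constant real base. Hence the endpoint equals $[(z,I)]_\Omega$ directly.

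At the initial end the same reasoning applies. Either $\gamma(0)$ is real, in which case all representatives of $x^*$ give one class, or $(\gamma(0),\Theta(0))\in\{(z^*,I^*),(\overline{z^*},-I^*)\}$.

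Consequently every point of $\mathcal{D}_\Omega$ lies in the path component of $\pi(z^*,I^*)$ or of $\pi(\overline{z^*},-I^*)$. Path components are connected, so there are at most two connected components.

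As a side remark, when $\Omega$ meets $\mathbb{R}$ the two anchors coincide and $\mathcal{D}_\Omega$ is even connected. This case is not needed for the statement.

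The key care point is the handling of the ambiguity $(z,I)$ versus $(\overline z,-I)$ at the endpoints, via conjugating the base and negating the spherical coordinate. We will also check explicitly that the conjugated pair is again a legitimate circular lifting. This holds because $\tau_{-\Theta(t)}(\overline{\gamma(t)})=\tau_{\Theta(t)}(\gamma(t))$, so the image in $\Omega$ is unchanged.
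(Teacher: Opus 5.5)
Your argument is correct and uses the same ingredients as the paper's proof: Theorem \ref{thm-connectedness-via-CL}, Theorem \ref{thm-from-CL-to-L}, and the freedom to replace $(\gamma,\Theta)$ by $(\overline{\gamma},-\Theta)$. The only difference is that you join every point to one of two fixed anchor classes over a base point $x^*$, whereas the paper runs a pigeonhole argument on three arbitrary points, where its Case 2 in effect uses $[(\overline{z_3},-I_3)]_\Omega$ as the anchor. One small slip: the second anchor in your first paragraph should be $\pi(\overline{z^*},-I^*)$, not $\pi(\overline{z^*},I^*)$, which is what you correctly use later.
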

\begin{proof}
The existence of complex structure (see Theorem \ref{thm-existence-complex-altas}) implies that every CCL Riemann domain is locally path-connected.
We thus know the connected components coincide with the path-connected components in every CCL Riemann domain.
So it suffices to prove that at least two of any three elements in every CCL Riemann domain are path-connected.

Assume $\Omega$ is a domain in $\mathbb{O}$, and $[(z_k,I_k)]_\Omega$ $(k=1,2,3)$ are three arbitrary points in $\mathcal{D}_{\Omega}$, i.e., the CCL Riemann domain associated with $\Omega$.
Then by definition we see all the three octonions $(z_k)_{I_k}$ belong to $\Omega$.
It thus follows from Theorem \ref{thm-connectedness-via-CL} that there exist three circular liftings in $\Omega$ connecting each two of $(z_k)_{I_k}$ $(k=1,2,3)$ respectively.
For convenience, we denote the base and the spherical coordinate of the circular lifting connecting $(z_i)_{I_i}$ and $(z_j)_{I_j}$ $(1\leq i<j\leq 3)$ by $\gamma_{i,j}$ and $\Theta_{i,j}$.
We would like to mention that one can always replace $\gamma_{i,j}$ and $\Theta_{i,j}$ by $\overline{\gamma_{i,j}}
$ and $-\Theta_{i,j}$ without changing the corresponding circular lifting. Hence there are essentially two possible cases as follows:

Case $1$: At least one of $\gamma_{i,j}$ $(1\leq i<j\leq 3)$ satisfies $\gamma_{i,j}(0)=z_i$ and $\gamma_{i,j}(1)=z_j$.
Then Theorem \ref{thm-from-CL-to-L} indicates that for such $i, j$ there exists a path in $\mathcal{D}_{\Omega}$ connecting
$[(z_i,I_i)]_\Omega$ and $[(z_j,I_j)]_\Omega$.

Case $2$: Every $\gamma_{i,j}$ $(1\leq i<j\leq 3)$ satisfies $\gamma_{i,j}(0)=z_i$ and $\gamma_{i,j}(1)=\overline{z_j}$.
Then invoking Theorem \ref{thm-from-CL-to-L} again, we see there exist a path in $\mathcal{D}_{\Omega}$ connecting
$[(z_1,I_1)]_\Omega$ and $[(\overline{z_3},-I_3)]_\Omega$, and another path in  $\mathcal{D}_{\Omega}$ connecting $[(z_2,I_2)]_\Omega$ and $[(\overline{z_3},-I_3)]_\Omega$, implying that
$[(z_1,I_1)]_\Omega$ and $[(z_2,I_2)]_\Omega$ are path-connected.

In conclusion, no matter in which case, at least two of $[(z_k,I_k)]_\Omega$ $(k=1,2,3)$ are path-connected in $\mathcal{D}_{\Omega}$. It completes the proof.
\end{proof}
\begin{remark}
We would like to illustrate the theorem above by two simple examples as follows.
If we look at an open ball in $\mathbb{O}$ with a center far away from the real axis and a sufficiently small radius, then we can see that the associated  CCL Riemann domain is essentially homeomorphic to the disjoint union of two open discs in $\mathbb{C}$.
In such example, there are two connected components.
By contrast, the CCL Riemann domain associated with an open ball in $\mathbb{O}$ with a center on the real axis is connected, thereby having only one connected component.
\end{remark}

\subsection{Stem functions on $\mathcal{D}_\Omega$}
Theorem \ref{thm-conditional-uniqueness-stem-vectors} implies that the CCL Riemann domain $\mathcal{D}_\Omega$ can serve as a unified definition domain of stem functions of all slice Fueter-regular functions in the domain $\Omega$. Recall that any complex slice $\Omega^I$ can be separated into two parts:
$$\Omega^I_{+}:=\Omega^I\cap\{\alpha+\beta i|\beta\geq 0\}\quad\text{and}\quad\Omega^I_{-}:=\Omega^I\cap\{\alpha+\beta i|\beta < 0\}. $$
We mention here to avoid misunderstanding that $\alpha$ and $\beta$ are real numbers.
\begin{definition}
Let $\Omega$ be a domain in $\mathbb{O}$. For a slice Fueter-regular function $f$ in the domain $\Omega$, the stem function of $f$ on $\mathcal{D}_\Omega$ is defined by
$$
(\hat{u},\hat{v}): [(z,I)]_\Omega\mapsto (u_{z_I}, \pm v_{z_I})  \quad\text{for } z\in\Omega^I_\pm,
$$
where $(u_{z_I}, v_{z_I})$ is the stem vector of $f$ at the point $z_I$.
\end{definition}
Theorem \ref{thm-conditional-uniqueness-stem-vectors} indicates $$(u_{z_I}, v_{z_I})=(u_{{z}_{I'}}, v_{{z}_{I'}})$$
for any $(z,I), (z,I')\in\hat{\Omega}$ with $[(z,I)]_\Omega=[(z,I')]_\Omega$. Hence $(\hat{u},\hat{v})$ is well-defined.
Furthermore, it is not difficult to see the following equality:
$$
f(z_I)=\hat{u}\left([(z,I)]_\Omega\right)+I \hat{v}\left([(z,I)]_\Omega\right)\quad\text{for }(z,I)\in\hat{\Omega}.
$$

\begin{remark}
If $\Omega$ is CCL-symmetric, then the mapping $P:\mathcal{D}_\Omega\to\mathbb{C}$ is in fact a homeomorphism from $\mathcal{D}_\Omega$ to $\cup_{I\in\mathbb{S}}\Omega^I$ $(\subset\mathbb{C})$. Moreover, Corollary \ref{cor-complete-on-CCL-symmetric-domain} ensures the existence of the global stem function $(u_\Omega,v_{\Omega})$ of $f$.
In such case, the stem function $(\hat{u},\hat{v})$ of $f$ on $\mathcal{D}_\Omega$ is essentially identical with the global stem function $(u_\Omega,v_{\Omega})$ in the following sense:
$$
\hat{u}=u_\Omega\circ P\quad\text{and}\quad\hat{v}=v_{\Omega}\circ P.
$$
\end{remark}

\bibliographystyle{plainnat}
\bibliography{mybibfile}
\end{document}